\theoremstyle{plain}
\newtheorem{thm}{Theorem}[section]
\newtheorem{theorem}[thm]{Theorem}
\newtheorem{lemma}[thm]{Lemma}
\newtheorem{corollary}[thm]{Corollary}
\newtheorem{definition}[thm]{Defintion}
\newtheorem{ass}[thm]{Assumption}
\newtheorem{proposition}[thm]{Proposition}
\theoremstyle{remark}
\newtheorem{remark}[thm]{Remark}
\newcommand{\cc}{{\bf {\tiny \sc{pde}}}}
\newcommand{\cp}{{\bf {\tiny \sc{ppde}}}}
\newcommand{\um}{u^{\tiny {J}}}
\newcommand{\lef}{\ell^{\infty}(\IJ)}
\newcommand{\bbR}{\mathbb R}
\newcommand{\bbI}{\mathbb I}
\newcommand{\bbJ}{\mathbb J}
\newcommand{\cG}{{\mathcal G}}
\newcommand{\cF}{{\mathcal F}}
\newcommand{\cN}{{\mathcal N}}
\newcommand{\bbN}{\mathbb N}
\newcommand{\ba}{\overline {a}}
\newcommand{\amin}{a_{\tiny{\hbox{\sc min}}}}
\newcommand{\amax}{a_{\tiny{\hbox{\sc max}}}}
\newcommand{\aminb}{{\overline a}_{\tiny{\hbox{\sc min}}}}
\newcommand{\amaxb}{{\overline a}_{\tiny{\hbox{\sc max}}}}
\newcommand{\Li}{L^{\infty}(D)}
\newcommand{\Th}{\Theta}
\newcommand{\mud}{\mu^{\delta}}
\newcommand{\bbC}{{\mathbb C}}
\newcommand{\re}{\hbox{Re}}
\newcommand{\im}{\hbox{Im}}
\newcommand{\va}{\zeta}
\newcommand{\tp}{\tilde p}
\newcommand{\eps}{\epsilon}
\newcommand{\ys}{y^{\star}}
\newcommand{\zs}{z^{\star}}
\newcommand{\km}{\kappa_m}
\newcommand{\kj}{\kappa_j}
\newcommand{\kmp}{\kappa_m'}
\newcommand{\amp}{\amin'}
\newcommand{\kjp}{\kappa_j'}
\newcommand{\xr}{{\bf r}}
\definecolor{darkred}{rgb}{.7,0,0}
\newcommand{\beq}{\begin{equation}}
\newcommand{\eeq}{\end{equation}}
\newcommand{\bea}{\begin{eqnarray}}
\newcommand{\eea}{\end{eqnarray}}
\newcommand{\beas}{\begin{eqnarray*}}
\newcommand{\eeas}{\end{eqnarray*}}
\newcommand{\ds}{\displaystyle}
\def\IC{\mathbb{C}}
\def\IJ{\mathbb{J}}
\def\IN{\mathbb{N}}
\def\IL{\mathbb{P}}
\def\IR{\mathbb{R}}
\def\IE{\mathbb{E}}
\def\b1{{\bf 1}}
\def\cA{{\mathcal A}}
\def\cB{{\mathcal B}}
\def\cE{{\mathcal E}}
\def\cF{{\mathcal F}}
\def\cG{{\mathcal G}}
\def\cL{{\mathcal L}}
\def\cO{{\mathcal O}}
\def\cQ{{\mathcal Q}}
\def\cS{{\mathcal S}}
\def\cU{{\mathcal U}}
\newcommand{\eqref}[1]{$(\ref{#1})$}
\begin{document}

\title[Sparse Approximation of Inverse Problems]{Sparse Deterministic Approximation of Bayesian Inverse Problems}

\author{Ch. Schwab$^1$ and A.M. Stuart$^2$}
\address{$^1$ Seminar for Applied Mathematics,ETH, 8092 Zurich, Switzerland}
\address{$^2$ Mathematics Institute, University of Warwick, Coventry, CV4 7AL, UK}
\ead{
\\
christoph.schwab@sam.math.ethz.ch\\
a.m.Stuart@warwick.ac.uk 
}

\begin{abstract}
We present a parametric deterministic formulation
of Bayesian inverse problems with input parameter from
infinite dimensional, separable Banach spaces.
In this formulation,
the forward problems are parametric, deterministic
elliptic partial differential equations, and the
inverse problem is to determine the unknown,
parametric deterministic coefficients
from noisy
observations comprising linear functionals of the solution.

We prove a generalized polynomial chaos representation
of the posterior density with respect
to the prior measure, given noisy observational data.
We analyze the sparsity of the posterior density in terms
of the summability of the input data's
coefficient sequence. The first step in this process
is to estimate the fluctuations in the prior.
We exhibit sufficient conditions on the prior
model in order for approximations of the posterior
density to converge at a given algebraic rate, in terms of the
number $N$ of unknowns appearing in the parameteric
representation of the prior measure.
Similar sparsity and approximation results are also
exhibited for the solution and covariance of the
elliptic partial differential equation under the
posterior. These results then form the basis for
efficient uncertainty quantification, in the
presence of data with noise.
\end{abstract}


\section{Introduction} \label{sec:i}

Quantification of the uncertainty 
in predictions made by physical models,
resulting from uncertainty in the input parameters
to those models, is of increasing importance in 
many areas of science and engineering.
Considerable effort has been devoted to
developing numerical methods for this task.
The most straightforward approach is to sample 
the uncertain system responses by Monte Carlo
simulations. These have the advantage of being
conceptually straightforward, but are constrained
in terms of efficiency by their $N^{-\frac12}$ rate
of convergence ($N$ number of samples). 
In the 1980s
the engineering community started to develop new
approaches to the problem via parametric representation
of the probability space for the input parameters
\cite{SpG89,GSp03} based on the pioneering ideas of
Wiener \cite{Wiener1938}. 
The use of sparse spectral approximation techniques 
\cite{TS07,CS+CJG11} opens the avenue towards
algorithms for computational quantification of
uncertainty which beat the asymptotic complexity 
of Monte Carlo (MC) methods, as measured
by computational cost per unit error in predicted
uncertainty.

Most of the work in this area has been confined to
the use of probability models on the input parameters
which are very simple, albeit leading to high dimensional
parametric representations. Typically the randomness is
described by a (possibly countably infinite) set of
independent random variables representing 
uncertain coefficients in 
parametric expansions of input data, 
typically with known closed form Lebesgue densities. 
In many applications, such 
uncertainty in parameters is compensated for by 
(possibly noisy) observations, leading to an
inverse problem. 
One approach to such inverse
problems is via the techniques of optimal control \cite{BK89};
however this does not lead naturally to quantification
of uncertainty. A Bayesian approach to the
inverse problem \cite{ks04,Stuart10} allows the observations
to map a possibly simple prior probability distribution 
on the input parameters into a posterior distribution. 
This posterior distribution is typically much more complicated
than the prior, involving many correlations and 
without a useable closed form. 
The posterior distribution completely quantifies 
the uncertainty in the system's response, 
under given prior and structural assumptions on the system 
and given observational data. 
It allows, in particular, the Bayesian statistical
estimation of unknown system parameters and 
responses by integration with respect to the 
posterior measure, which is of interest in 
many applications.

Monte Carlo Markov chain (MCMC) methods can be used to
probe this posterior probability distribution. This
allows for computation of estimates of uncertain system responses
conditioned on given observation data by means of
approximate integration.
However, these methods suffer from the same limits on computational
complexity as straightforward Monte Carlo methods. It is
hence of interest to investigate whether sparse approximation
techniques can be used to approximate the posterior
density and conditional expectations given the data. 
In this pqper we study this question in the
context of a model elliptic inverse problem. 
Elliptic problems with random coefficients 
have provided an important class of model
problems for the uncertainty quantification community,
see, for example, \cite{BTZ04,CS+CJG11}
and the references therein. 
In the context of inverse problems and noisy observational data,
the corresponding elliptic problem arises naturally
in the study of groundwater flow (see \cite{McT96})
where hydrologists wish to determine the transmissivity 
(diffusion coefficient) from the head (solution of
the elliptic PDE). The elliptic inverse problem
hence provides natural model
problem within which to study sparse representations 
of the posterior distribution.

In Section \ref{sec:b} we recall the Bayesian
setting for inverse problems from \cite{Stuart10}, stating
and proving an infinite dimensional Bayes rule adapted
to our inverse problem setting in Theorem \ref{t:dens}.
Section \ref{sec:e} formulates the forward and
inverse elliptic 
problem of interest, culminating in an
application of Bayes rule in Theorem \ref{t:elldens}.
The prior model is built
on the work in \cite{BAS09,CDS1} in which the
diffusion coefficient is represented 
parametrically via an infinite
sum of functions, each with an independent uniformly
distributed and compactly supported random variable
as coefficient.  Once we have shown that the
posterior measure is well-defined and
absolutely continuous with respect to the prior, we
proceed to study the analytic dependence of the posterior
density in Section \ref{sec:comp}, culminating in 
Theorems \ref{thm:Analyt} and \ref{thm:PhiArbound}.
In Section \ref{sec:PCApprTh} we show how 
this parametric representation, and analyticity,
may be employed to
develop sparse polynomial chaos representations of the
posterior density, and the key 
Theorem \ref{semierror} summarizes the achievable
rates of convergence. 
In Section \ref{sec:Expost}
we study a variety of practical issues that arise
in attempting to exploit the sparse polynomial
representations as realizable algorithms for the
evaluation of (posterior) expectations.
Section \ref{sec:ConGen} contains our concluding remarks
and, in particular, a discussion of the computational
complexity of the new methodology, in comparison with
that for Monte Carlo based methods.
 
Throughout we concentrate on the posterior density
itself. However we also provide analysis related to
the analyticity (and hence sparse polynomial
representation) of various functions of the unknown
input, in particular the solution to the forward
elliptic problem, and tensor products of this function.
For the above class of elliptic model problems,
we prove that for given data, there exist sparse,
$N$-term gpc (``generalized polynomial chaos'') 
approximations of this expectation with respect
to the posterior
(which is written as a density reweighted
expectation with respect to the prior) 
which converge at the same rates afforded
by best $N$-term gpc approximations of the 
system response to uncertain, parametric inputs.
Moreover, our analysis implies that the set
$\Lambda_N$ of the $N$ ``active'' gpc-coefficients
is identical to the set $\Lambda_N$ 
of indices of a best $N$-term approximation of 
the system's response.
It was shown in \cite{CDS1,CDS2} that these rates
are, in turn, completely determined by the 
the decay rates of the input's fluctuation 
expansions. We thus show that the machinery
developed to describe gpc approximations of
uncertain system response may be employed to study
the more involved Bayesian inverse problem where
the uncertainty is conditioned on observational data. 
Numerical algorithms which achieve the optimal
complexity implied by the sparse approximations,
and numerical results demonstrating this 
will be given in our forthcoming work \cite{AScSt}.

\section{Bayesian Inverse Problems} \label{sec:b}

%
Let $G: X \to R$ denote a ``forward'' map from 
some separable Banach space $X$ of unknown parameters 
into another separable Banach space $R$ of responses. 
We equip $X$ and $R$ with norms $\| \cdot\|_X$ 
and  with $\| \cdot \|_R$, respectively.
In addition, we are given 
$\cO(\cdot): R \rightarrow \IR^K$ denoting a bounded
linear observation operator on the space $R$ of
system responses, which belong 
to the dual space $R^*$ 
of the space $R$ of system responses.
We assume that the data is finite
so that  $K<\infty,$ 
and equip $\IR^K$ with the Euclidean norm, denoted by $|\cdot|.$ 

We wish to determine the unknown data 
$u \in X$ from the noisy observations
\begin{equation}
\label{eq:obs1}
\delta = \cO(G(u)) + \eta
\end{equation}
where $\eta \in \IR^K$ represents the noise. We
assume that realization of the noise process
is not known to us, but that it is a draw from
the Gaussian measure $\cN(0,\Gamma),$ for some positive
(known) covariance operator $\Gamma$ on $\IR^K$. 
If we define $\cG:X \to \IR^K$ by $\cG=\cO \circ G$ then
we may write the equation for the observations as
\begin{equation}
\label{eq:obs11}
\delta = \cG(u) + \eta. 
\end{equation}
We define the least squares functional 
(also referred to as ``potential'' in what follows)
$\Phi:X \times \IR^K\to \bbR$ 
by
\begin{equation}
\Phi(u;\delta) = \frac12|\delta - \cG(u) |_{\Gamma}^2
\label{eq:lsq}
\end{equation}
where $|\cdot|_{\Gamma} = |\Gamma^{-\frac12}\cdot|$ 
so that 
$$
\Phi(u;\delta)
=
\frac12
\left(
(\delta - \cG(u))^\top \Gamma^{-1} (\delta - \cG(u))
\right)
\;.
$$
%

In \cite{Stuart10} it is shown that, under appropriate conditions
on the forward and observation model $\cG$ and the prior
measure on $u$, the posterior
distribution on $u$ is absolutely continuous with
respect to the prior with Radon-Nikodym derivative
given by an infinite dimensional version of Bayes rule.
Posterior uncertainty is then
determined by integration of suitably chosen functions 
against this posterior. 
At the heart of the deterministic approach proposed 
and analyzed here lies the {\em
reformulation of the forward problem with stochastic
input data as an infinite dimensional, parametric 
deterministic problem}.
We are thus interested in expressing 
the posterior distribution in terms of a parametric
representation of the unknown coefficient function $u$.
To this end we assume 
that, under the prior distribution,
this function admits a 
{\em parametric representation} 
of the form
\begin{equation}
\label{eq:par}
u=\bar{a} + \sum_{j \in \bbJ} y_j\psi_j
\end{equation}
where $y=\{y_j\}_{j \in \bbJ}$ is an
i.i.d sequence of real-valued random variables
$y_j \sim \cU(-1,1)$ and $\bar{a}$ and the $\psi_j$ are
elements of $X$. Here and throughout, $\bbJ$ denotes a 
finite or countably infinite index set, i.e. either
$\bbJ = \{1,2,...,J\}$ or $\bbJ = \bbN$.
All assertions proved in the present paper hold in either
case, and all bounds are in particular independent of 
the number $J$ of parameters.

To derive the parametric expression of the prior measure $\mu_0$ 
on $y$ we denote by 
$$
U=(-1,1)^\bbJ
$$
the space of all sequences 
$(y_j)_{j\in\bbJ}$ of real numbers $y_j\in (-1,1)$.
Denoting the sub $\sigma$-algebra of 
Borel subsets on $\mathbb{R}$ which are also subsets
of $(-1,1)$ by $\cB^1(-1,1)$, the pair 
\beq\label{eq:UcB}
(U,\cB) 
= 
\left((-1,1)^\bbJ,\; \bigotimes_{j\in\bbJ} \cB^1(-1,1) \right)
\eeq
is a measurable space. 
We equip $(U,\cB)$ with the uniform probability 
measure
\beq\label{eq:defmu0}
\mu_0(dy) := \bigotimes_{j\in \IJ} \frac{dy_j}{2}
\eeq
which corresponds to bounded intervals for the 
possibly countably many uncertain parameters. 
Since the countable product of probability measures is 
again a probability measure, $(U,\cB,\mu_0)$ is a probability
space.
{\em We assume in what follows that the prior measure on the
uncertain input data, parametrized in the form \eqref{eq:par}, 
is $\mu_0(dy)$.}
We add in passing that unbounded parameter ranges
as arise, e.g., in lognormal random 
diffusion coefficients in models for subsurface flow
\cite{McT96},
can be treated by the techniques developed here, 
at the expense of additional technicalities. 
We refer to \cite{AScSt} for details as well as for numerical
experiments.

Define $\Xi:U \to \IR^K$ by
\begin{equation} \label{eq:ymap}
\Xi(y)=\cG(u)\Bigl|_{u=\bar{a}+\sum_{j \in \bbJ} y_j\psi_j}.
\end{equation}
In the following we view $U$ as  a bounded subset in
$\lef$, the Banach space of bounded sequences, 
and thereby introduce a notion of continuity in $U$.

\begin{theorem} \label{t:dens}
Assume that $\Xi:{\bar U} \to \IR^K$ is bounded
and continuous. Then $\mud(dy)$, the distribution of
$y$ given $\delta$, is
absolutely continuous with respect to $\mu_0(dy)$.
Furthermore, if  
\begin{equation} \label{eq:PostDens}
\Th(y)=\exp\bigl(-\Phi(u;\delta)\bigr)\Bigl|_{u=\bar{a}+\sum_{j \in \bbJ} y_j\psi_j},
\end{equation}
then
\beq \label{eq:post}
\frac{d\mud}{d\mu_0}(y) = \frac{1}{Z} \Th(y)
\eeq
where
\beq \label{eq:Z}
Z=\int_{U} \Th(y)\mu_0(dy).
\eeq
\end{theorem}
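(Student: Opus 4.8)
The plan is to realise the posterior $\mud$ as the regular conditional distribution of $y$ given $\delta$ under a single joint law on $U\times\IR^K$, and then to read off its density with respect to $\mu_0$ from a Fubini computation. First I would build the joint measure. Under the model \eqref{eq:obs11}, with $y\sim\mu_0$ and independent noise $\eta\sim\cN(0,\Gamma)$, the conditional law of $\delta$ given $y$ is the Gaussian $\cN(\Xi(y),\Gamma)$, whose Lebesgue density is proportional to $\exp(-\Phi(u;\delta))=\Th(y)$ with $u=\bar a+\sum_{j\in\bbJ}y_j\psi_j$. I would therefore introduce the reference product measure $\nu_0(dy,d\delta)=\mu_0(dy)\otimes\cN(0,\Gamma)(d\delta)$, under which $y$ and $\delta$ are independent, and the data-generating measure $\nu(dy,d\delta)$ with $\delta$-marginal replaced by $\cN(\Xi(y),\Gamma)$. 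Taking the ratio of Gaussian densities gives
\beq
\frac{d\nu}{d\nu_0}(y,\delta)
=\exp\Bigl(-\frac12\snorm[\Gamma]{\delta-\Xi(y)}^2+\frac12\snorm[\Gamma]{\delta}^2\Bigr)
=\Th(y)\,\exp\Bigl(\frac12\snorm[\Gamma]{\delta}^2\Bigr),
\eeq
so the joint law is already written as a density against the product reference, with the $y$-dependence carried entirely by $\Th$.

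Next I would verify well-posedness of the normalization, which is precisely where the two hypotheses enter. Continuity of $\Xi$ on $\bar U$ makes $\Th$ continuous, hence $\cB$-measurable, so that $Z=\int_U\Th(y)\,\mu_0(dy)$ and all the integrals below are defined. Since $\Phi\ge0$ we have $\Th\le1$ and thus $Z\le1<\infty$; since $\Xi$ is bounded on $\bar U$, there is $M<\infty$ with $\Phi\le M$ uniformly in $y$, whence $\Th(y)\ge e^{-M}>0$ and therefore $Z\ge e^{-M}>0$. Strict positivity of $Z$ is the essential consequence of boundedness: without it the candidate density $Z^{-1}\Th$ could fail to define a probability measure.

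Then comes the conditioning step. As $(U,\cB)$ is a standard Borel space, a regular conditional distribution $\mud$ of $y$ given $\delta$ exists, characterised by $\int_{U\times\IR^K} f(y)g(\delta)\,\nu(dy,d\delta)=\int_{\IR^K}\bigl(\int_U f\,\mud(dy)\bigr)g(\delta)\,\nu_\delta(d\delta)$ for bounded measurable $f,g$, where $\nu_\delta$ is the $\delta$-marginal of $\nu$. Computing the left-hand side by Fubini against the density of the previous display, the $y$-independent factor $\exp(\frac12\snorm[\Gamma]{\delta}^2)$ factors out of both numerator and normalisation and cancels, yielding
\beq
\int_U f(y)\,\mud(dy)
=\frac{\int_U f(y)\,\Th(y)\,\mu_0(dy)}{\int_U \Th(y)\,\mu_0(dy)}
=\frac{1}{Z}\int_U f(y)\,\Th(y)\,\mu_0(dy)
\eeq
for $\nu_\delta$-a.e.\ $\delta$, in particular at the observed datum. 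This simultaneously gives $\mud\ll\mu_0$ and the stated identities \eqref{eq:post}--\eqref{eq:Z}.

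I expect the main obstacle to be the measure-theoretic bookkeeping of the conditioning rather than any single estimate: one must make the regular conditional distribution precise in this infinite-dimensional product setting and rigorously justify the interchange of integrals that produces the quotient formula. A secondary point needing care is that continuity of $\Xi$ is posed in the $\lef$ topology while $\cB$ is the product $\sigma$-algebra, so one should confirm this still delivers the $\cB$-measurability of $\Th$ on which every integral relies. Once positivity of $Z$ is secured from boundedness, the remainder is Fubini's theorem together with the defining property of conditional expectation, along the lines of the infinite-dimensional Bayes framework in \cite{Stuart10}.
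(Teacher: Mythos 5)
Your proposal is correct and follows essentially the same route as the paper: the same reference measure $\nu_0=\mu_0\otimes\cN(0,\Gamma)$, the same joint law $\nu$ with $\delta\mid y\sim\cN(\Xi(y),\Gamma)$, continuity of $\Xi$ for measurability, and boundedness of $\Xi$ to bound $\Th$ below and hence get $Z>0$. The only difference is that where you carry out the conditioning step by hand (regular conditional distributions plus Fubini, with the $y$-independent factor $\exp(\tfrac12|\delta|_\Gamma^2)$ cancelling), the paper delegates exactly this step to Lemma 5.3 of the cited reference \cite{HSV05}.
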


\begin{proof}
Let $\nu_0$ denote the probability measure 
on $U\times \IR^K$ defined by $\mu_0(dy) \otimes
\pi(d\delta),$ where $\pi$ is the Gaussian measure 
$\cN(0,\Gamma).$ Now define a second probability measure
$\nu$ on $U \times \IR^K$ as follows. 
First we specify the distribution of $\delta$ given $y$
to be $\cN(\Xi(y),\Gamma)$. 
Since $\Xi(y):{\bar U} \to \IR^K$ is continuous and $\mu_0(U)=1$
we deduce that $\Xi$ is $\mu_0$ measurable. Hence
we may complete the definition of $\nu$ by specifying 
that $y$ is distributed according to $\mu_0$.
By construction, and ignoring the constant of
proportionality which depends only on $\delta$, 
\footnote{$\Theta(y)$ is also a function of
$\delta$ but we suppress this for economy of notation.
}
$$
\frac{d\nu}{d\nu_0}(y,\delta) \propto  \Th(y).$$
From the boundedness of $\Xi$ on ${\bar U}$ we deduce 
that $\Th$ is bounded from below on ${\bar U}$ by $\theta_0>0$
and hence that 
$$Z \ge \int_{U} \theta_0 \mu_0(dy)=\theta_0>0$$ 
since $\mu_0(U)=1$.
Noting that, under $\nu_0$, $y$ and $\delta$ are independent,
Lemma 5.3 in \cite{HSV05} gives the desired result.
\end{proof}
We assume that we wish to 
compute the expectation of 
a function $\phi:X \to S$, for some Banach space $S$.
With $\phi$, we associate the parametric mapping
\begin{equation}
\Psi(y)
=
\exp\bigl(-\Phi(u;\delta)\bigr)\phi(u)\Bigl|_{u={\bar a}
+
\sum_{j \in \bbJ} y_j\psi_j}
: U\rightarrow S 
\;.
\label{eq:psi}
\end{equation}
From $\Psi$ we define
\begin{equation}\label{eq:intpsi}
Z'=\int_{U} \Psi(y)\mu_0(dy) \in S
\end{equation}
so that the expectation of interest is given by $Z'/Z\in S$.
Thus our aim is to approximate $Z'$ and $Z.$ 
Typical choices for $\phi$ in applications might
be $\phi(u)=G(u)$, the response of the system,
or 
\beq\label{eq:defmpointcorp}
\phi(u):=\bigl(G(u)\bigr)^{(m)} := \underbrace{G(u) \otimes ... \otimes G(u)}_{m \;\mbox{times}} 
      \in S = R^{(m)} := \underbrace{R\otimes ... \otimes R}_{m \;\mbox{times}}
\;.
\eeq
In particular the choices $\phi(u)=G(u)$
and $\phi(u)=G(u)\otimes G(u)$ together 
facilitate computation of the mean and 
covariance of the response.

In the next sections we will study the elliptic problem
and deduce, from known results concerning the 
parametric forward problem, the joint analyticity
of the posterior density $\Th(y)$, and also
$\Psi(y)$, as a function of the parameter vector $y\in U$. 
From these results, we deduce 
{\em 
sharp estimates on size of domain of analyticity
of $\Th(y)$ (and $\Psi(y)$)
as a function of each coordinate $y_j$, $j\in\mathbb{N}$.
}
We concentrate on the concrete choice of $\Psi$ defined
by \eqref{eq:defmpointcorp}, and often the
case $p=1$. The analysis can be extended to other
choices of $\Psi$.

\section{Model Parametric Elliptic Problem} \label{sec:e}
\subsection{Function Spaces} \label{sssec:e1a}
Our aim is to
study the inverse problem of determining the diffusion
coefficient $u$ of an elliptic PDE from observation of
a finite set of noisy linear functionals of the solution $p$,
given $u$.

Let $D$ be a bounded Lipschitz domain in $\bbR^d$, 
$d=1,2$ or $3$, with Lipschitz boundary $\partial D$.
Let further $\Bigl(H,(\cdot,\cdot),\|\cdot\|\Bigr)$
denote the Hilbert space $L^2(D)$ which we will identify
throughout with its dual space, i.e.  $H \simeq H^*$.

We define also the space $V$ of variational solutions
of the forward problem: specifically, we let 
$\Bigl(V,(\nabla\cdot,\nabla\cdot),\|\cdot\|_{V}\Bigr)$
denote the Hilbert space $H^1_0(D)$ 
(everything that follows will hold for rather general,
 elliptic problems with affine parameter dependence
 and ``energy'' space $V$).
The dual space $V^*$ of all continuous, linear functionals 
on $V$ is isomorphic to the Banach space $H^{-1}(D)$ 
which we equip with the dual norm to $V$, 
denoted $\|\cdot\|_{-1}$.
We shall assume for the (deterministic) data
$f\in V^*$. 
\subsection{Forward Problem} \label{sssec:se1c}
In the bounded Lipschitz domain $D$, 
we consider the following elliptic PDE:
\beq
-\nabla \cdot\bigl(u\nabla p\bigr) = f \quad \mbox{in}\quad D,
\qquad
p=0 \quad \mbox{in} \quad \partial D.
\label{eq:fwdproblem}
\eeq
Given data $u\in  L^\infty(D)$, a 
weak solution of \eqref{eq:fwdproblem} 
for any $f\in V^*$ is a function 
$p\in V$ which satisfies  
\beq
\label{weaksol}
\int_D  u (x) \nabla p(x)\cdot \nabla q(x) dx
=
_V\langle q,f \rangle_{V^*} 
\;\; \mbox{for all}\;\; q\in V\;.
\eeq
Here $_V\langle \cdot,\cdot \rangle_{V^*}$ denotes
the dual pairing between elements of $V$ and $V^*.$

For the well-posedness of the forward problem, 
we shall work under 
\begin{ass}\label{assump1}
There exist constants $0<\amin \le \amax < \infty$
so that
\beq \label{cLM1real}
0 < \amin \le u(x) \le \amax < \infty,\quad  x\in D,
\eeq
\end{ass}
Under Assumption \ref{assump1},
the Lax-Milgram Lemma ensures the existence and 
uniqueness of the response $p$ of \eqref{weaksol}. Thus,
in the notation of the previous section, $R = V$
and $G(u)=p.$ 
Moreover, this variational solution satisfies the a-priori 
estimate  
\beq
\label{cLM2}
\|G(u)\|_V = \|p \|_V \le \frac{\|f\|_{V^*}}{\amin}\;. 
\eeq 
We assume that the observation function $\cO:V \to \bbR^K$
comprises $K$ linear functionals $o_k \in V^*$, $k=1,\dots,K$.
In the notation of the previous section,
we denote by $X=L^\infty(D)$ the Banach space in which the
unknown input parameter $u$ takes values. It follows that
\beq
\label{cLM22}
|\cG(u)| \le \frac{\|f\|_{V^*}}{\amin}
\bigl(\sum_{k=1}^K\|o_k\|_{V^*}^2\bigr)^{\frac12}\;.
\eeq

\subsection{Structural Assumptions on Diffusion Coefficient} 
As discussed in section \ref{sec:b} we introduce a 
parametric representation of the random input parameter $u$
via an affine representation with respect to $y$,
which means that the parameters $y_j$ are the coefficients 
of the function $u$ in the formal series expansion
\beq
\label{assume1}
u(x,y)=\bar a(x)+ \sum_{j\in \IJ} y_j\psi_j(x),\quad x\in D,
\eeq
where $\bar a\in L^\infty(D)$ and $\{\psi_j\}_{j\in \IJ}\subset L^\infty(D)$. 
We are interested in the effect of approximating the
solutions input parameter $u(x,y),$ by
truncation of the series expansion \eqref{assume1} in the
case $\IJ=\IN$,
and on the corresponding effect on the forward (resp.
observational) map $G(u(\cdot))$ (resp. $\cG(u(\cdot))$)
to the family of elliptic equations with the 
above input parameters. 
In the decomposition \eqref{assume1}, we have the choice to either 
normalize the basis (e.g., assume they all have norm one in some space) 
or to normalize the parameters.  It is more convenient for us to do   
the latter.
This leads us to the following assumptions which shall be made throughout:
\begin{itemize}
\item[i)] 
For all $j\in \bbJ: \psi_j \in L^\infty(D)$ and $\psi_j(x)$ is 
defined for all $x\in D$,
\item[ii)] 
\beq \label{eq:defU}
y=(y_1,y_2,\dots) \in U=[-1,1]^{\bbJ},
\eeq
i.e. the parameter vector $y$ in \eqref{assume1}
belongs to the unit ball of the 
sequence space $\ell^\infty(\bbJ)$,
\item[iii)] for each $u(x,y)$ to be considered, 
\eqref{assume1} holds 
for every $x\in D$ and every $y\in U$.
\end{itemize}
We will, on occasion, use \eqref{assume1}
with $\IJ \subset \IN$, as well as with $\IJ=\IN$ 
(in the latter case the additional Assumption \ref{assump2} 
 below has to be imposed).
In either case, we will work throughout under the assumption that the ellipticity
condition \eqref{cLM1real} holds uniformly for $y\in U$.

\noindent
{\bf Uniform Ellipticity Assumption:}  
{\it there exist $0 < \amin \le \amax < \infty$ 
such that for all $x\in D$ and for all $y\in U$
\beq \label{primary}
0< \amin \le u(x,y) \le \amax < \infty.
\eeq
}

We refer to assumption (\ref{primary}) 
as ${\bf UEA}(\amin,\amax)$ in the following.
In particular, ${\bf UEA}(\amin,\amax)$ implies
$\amin \le \bar a(x) \le \amax$ for all $x\in D$, 
since we can choose $y_j=0$ for all $j\in\mathbb{N}$.
Also observe that the validity of the lower and upper 
inequality in \eqref{primary} 
for all $y\in U$ are respectively equivalent 
to the conditions that
\beq
\label{primary1}
\sum_{j\in \IJ}|\psi_j(x)| \le  \bar a(x) - \amin , \quad x\in D,
\eeq
and 
\beq
\label{primary1up}
\sum_{j\in \IJ}|\psi_j(x)|\le  \amax - \bar a(x),\quad x\in D.
\eeq
We shall require in what follows a quantitative control of 
the relative size of the fluctuations in the representation
\eqref{assume1}. To this end, we shall impose
\begin{ass}\label{assump2}
The functions $\bar a$ and $\psi_j$ in \eqref{assume1} satisfy
\[
\sum_{j\in \IJ}\|\psi_j\|_{L^\infty(D)}\le {\kappa\over 1+\kappa}\aminb,
\]
with $\aminb = \min_{x\in D}\bar a(x)>0$ and $\kappa>0$.
\end{ass}
Assumption \ref{assump1} is then satisfied by choosing
\beq
\amin := \aminb - {\kappa\over 1+\kappa}\aminb = {1\over 1+\kappa}\aminb.
\label{amin}
\eeq
\subsection{Inverse Problem}
We start by proving that the forward 
maps $G:X \to V$ and $\cG:X \to \IR^K$ are Lipschitz. 
 
\begin{lemma}
\label{lemmastab}
If $p$ and $\tilde{p}$ are solutions of \eqref{weaksol}
with the same right hand side $f$ and 
with coefficients $u$ and $\tilde{u}$, 
respectively,
and if these coefficients both satisfy Assumption \ref{assump1}
then the forward solution map $u\rightarrow p=G(u)$ is Lipschitz
as a mapping from $X$ into $V$ with Lipschitz constant defined by
\beq
\|p-\tilde{p}\|_V 
\leq 
\frac{\|f\|_{V^*}} {\amin^2} \|u - \tilde{u}\|_{L^\infty(D)}.
\label{stab}
\eeq
Moreover
the forward solution map can be composed with the
observation operator to prove that  the map 
$u\rightarrow \cG(u)$ is Lipschitz as a mapping from $X$ into $\IR^K$
with Lipschitz constant defined by 
\beq\label{eq:LipcG}
| \cG(u) - \cG(\tilde{u}) | 
\le 
\frac{\|f\|_{V^*}} {\amin^2} 
\bigl(\sum_{k=1}^K\| o_k \|_{V^*}^2\bigr)^{\frac12}
\|u - \tilde{u}\|_{L^\infty(D)}.
\eeq
\end{lemma}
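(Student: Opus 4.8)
The plan is to prove the Lipschitz bound \eqref{stab} for the forward map $G$ first, and then obtain \eqref{eq:LipcG} for $\cG = \cO\circ G$ as an immediate corollary by applying the bounded linear observation functionals. The natural starting point is to write down the two weak formulations \eqref{weaksol} satisfied by $p=G(u)$ and $\tilde p = G(\tilde u)$ with the \emph{same} right-hand side $f$, namely
\beq
\int_D u\,\nabla p\cdot\nabla q\,dx = {}_V\langle q,f\rangle_{V^*},
\qquad
\int_D \tilde u\,\nabla \tilde p\cdot\nabla q\,dx = {}_V\langle q,f\rangle_{V^*},
\eeq
for all $q\in V$. Subtracting these eliminates the data $f$ and yields an identity for the error $e := p-\tilde p$ tested against arbitrary $q\in V$.

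The key step is the standard second-Strang/Céa-type manipulation: after subtracting, I would add and subtract $\int_D u\,\nabla\tilde p\cdot\nabla q\,dx$ (or equivalently regroup) so as to write
\beq
\int_D u\,\nabla e\cdot\nabla q\,dx = -\int_D (u-\tilde u)\,\nabla\tilde p\cdot\nabla q\,dx
\qquad\mbox{for all }q\in V.
\eeq
Now I choose the test function $q=e=p-\tilde p$. On the left, Assumption \ref{assump1} (the lower ellipticity bound $u\ge\amin$) gives coercivity, $\int_D u\,|\nabla e|^2\,dx \ge \amin\|e\|_V^2$. On the right, I estimate $(u-\tilde u)$ in $L^\infty(D)$ and pull it out, then apply Cauchy--Schwarz on the two gradient factors to obtain a bound by $\|u-\tilde u\|_{L^\infty(D)}\,\|\tilde p\|_V\,\|e\|_V$. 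Combining the two sides and cancelling one factor of $\|e\|_V$ leaves
\beq
\amin\|e\|_V \le \|u-\tilde u\|_{L^\infty(D)}\,\|\tilde p\|_V.
\eeq
Finally, the a-priori bound \eqref{cLM2} applied to $\tilde p = G(\tilde u)$ gives $\|\tilde p\|_V \le \|f\|_{V^*}/\amin$, so dividing by $\amin$ produces exactly the claimed constant $\|f\|_{V^*}/\amin^2$, which proves \eqref{stab}.

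For the second assertion, since each $o_k\in V^*$, I have $|\cG(u)-\cG(\tilde u)|^2 = \sum_{k=1}^K |{}_V\langle p-\tilde p, o_k\rangle_{V^*}|^2 \le \sum_{k=1}^K \|o_k\|_{V^*}^2\,\|p-\tilde p\|_V^2$, and inserting \eqref{stab} gives \eqref{eq:LipcG} directly. I do not expect a genuine obstacle here; the only point requiring mild care is the regrouping that isolates $(u-\tilde u)$ as the sole dependence on the coefficient difference, and ensuring the $L^\infty$ bound on $u-\tilde u$ is legitimately pulled out of the integral before Cauchy--Schwarz. The uniform lower bound $\amin>0$ from Assumption \ref{assump1} is what makes the coercivity step valid, and it is essential that \emph{both} coefficients satisfy it so that both solutions exist and the a-priori estimate applies to $\tilde p$.
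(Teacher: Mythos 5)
Your proposal is correct and follows essentially the same route as the paper: subtract the two weak formulations, isolate $(u-\tilde u)$ so that $e=p-\tilde p$ solves an auxiliary problem with right-hand side $-\int_D(u-\tilde u)\nabla\tilde p\cdot\nabla q\,dx$, bound that functional in $V^*$ by $\|u-\tilde u\|_{L^\infty(D)}\|\tilde p\|_V$, and close with the a-priori estimate \eqref{cLM2} on $\tilde p$. The only cosmetic difference is that you test with $q=e$ and invoke coercivity explicitly, whereas the paper packages the same step as the Lax--Milgram stability bound $\|e\|_V\le\|L\|_{V^*}/\amin$; the deduction of \eqref{eq:LipcG} from the linear functionals $o_k$ is identical.
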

{\em Proof}:
Subtracting the variational formulations for $p$ and $\tilde{p}$,
we find that for all $q\in V$,
$$
0=\int_D u \nabla p\cdot \nabla q dx 
 -
 \int_D \tilde{u}\nabla \tilde{p}\cdot \nabla q dx
 =
 \int_D u(\nabla p-\nabla \tilde{p}) \cdot \nabla q dx
 +
 \int_D (u - \tilde{u}) \nabla \tilde{p}\cdot \nabla q dx.
$$
Therefore $w=p - \tilde{p}$ 
is the solution of
$\int_D u \nabla w\cdot \nabla q=L(q)$
where
$L(v):=\int_D (\tilde{u}-u) \nabla \tilde{p}\cdot \nabla v$.
Hence
$$
\|w\|_V\leq \frac {\|L\|_{V^*}} \amin,
$$
and we obtain \eqref{stab} since it follows from \eqref{cLM2}
that
$$
\|L\|_{V^*}
=
\max_{\|v\|_V=1}|L(v)| 
\leq 
\|u -\tilde{u}\|_{L^\infty(D)}\|\tilde{p}\|_V
\leq 
\|u -\tilde{u}\|_{L^\infty(D)} \frac{ \|f\|_{V^*}}{\amin}.
$$
Lipschitz continuity of $\cG=\cO\circ G: X \to \IR^K$
is immediate since $\cO$ comprises the $K$
linear functionals $o_k$.
Thus \eqref{stab} implies \eqref{eq:LipcG}.
\hfill $\Box$

The next result may be deduced in a straightforward
fashion from the preceding analysis:

\begin{theorem} 
\label{t:elldens}
Under the ${\bf UEA}(\amin,\amax)$ and
Assumption \ref{assump2} it follows that the posterior
measure $\mud(dy)$ on $y$ given $\delta$ is absolutely
continuous with respect to the prior measure $\mu_0(dy)$
with Radon-Nikodym derivative given by \eqref{eq:PostDens}
and \eqref{eq:post}.
\end{theorem}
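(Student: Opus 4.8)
The plan is to verify that the hypotheses of Theorem~\ref{t:dens} hold in the concrete elliptic setting, after which the conclusion follows immediately since \eqref{eq:PostDens} and \eqref{eq:post} are precisely the conclusions of that abstract theorem. Thus the entire task reduces to checking that the map $\Xi:\bar U \to \IR^K$, defined in \eqref{eq:ymap} by $\Xi(y)=\cG(u)$ evaluated at $u=\bar a + \sum_{j\in\IJ} y_j\psi_j$, is both bounded and continuous on $\bar U = U = [-1,1]^\IJ$, where $U$ is regarded as a subset of $\lef$.

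First I would confirm that ${\bf UEA}(\amin,\amax)$ together with Assumption~\ref{assump2} guarantees that for every $y\in U$ the coefficient $u(\cdot,y)=\bar a + \sum_{j\in\IJ} y_j\psi_j$ satisfies Assumption~\ref{assump1}, i.e.\ the uniform ellipticity bounds \eqref{cLM1real} hold with the \emph{same} constants $\amin,\amax$ for all $y\in U$. This is exactly what \eqref{primary} asserts, and Assumption~\ref{assump2} with the choice \eqref{amin} makes the lower bound explicit. With uniform ellipticity in hand, the a-priori bound \eqref{cLM22} gives
\[
|\Xi(y)| = |\cG(u(\cdot,y))| \le \frac{\|f\|_{V^*}}{\amin}\Bigl(\sum_{k=1}^K \|o_k\|_{V^*}^2\Bigr)^{1/2}
\]
\emph{uniformly} in $y\in U$, which establishes boundedness of $\Xi$ on $\bar U$.

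Next I would establish continuity. The key observation is that the parametric-to-coefficient map $y\mapsto u(\cdot,y)$ is Lipschitz from $\lef$ into $X=L^\infty(D)$: for $y,\tilde y\in U$ one has
\[
\|u(\cdot,y)-u(\cdot,\tilde y)\|_{L^\infty(D)}
= \Bigl\|\sum_{j\in\IJ}(y_j-\tilde y_j)\psi_j\Bigr\|_{L^\infty(D)}
\le \Bigl(\sum_{j\in\IJ}\|\psi_j\|_{L^\infty(D)}\Bigr)\,\|y-\tilde y\|_{\lef},
\]
where the summability of $\|\psi_j\|_{L^\infty(D)}$ is furnished by Assumption~\ref{assump2}. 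Composing this with the Lipschitz continuity of $u\mapsto\cG(u)$ from Lemma~\ref{lemmastab} (specifically the bound \eqref{eq:LipcG}) shows that $\Xi=\cG\circ u(\cdot,\cdot)$ is Lipschitz, hence continuous, from $\bar U\subset\lef$ into $\IR^K$. Having verified both boundedness and continuity, the result follows by direct application of Theorem~\ref{t:dens}.

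The main subtlety to be careful about is the interplay between the two sets of ellipticity constants. Lemma~\ref{lemmastab} requires \emph{both} coefficients $u(\cdot,y)$ and $u(\cdot,\tilde y)$ to satisfy Assumption~\ref{assump1}, so I must invoke the uniformity of \eqref{primary} over $y\in U$ rather than treating each $y$ in isolation; this is exactly where ${\bf UEA}(\amin,\amax)$ is essential and is the point deserving the most attention. A secondary technical point, trivial but worth noting, is that in the countably infinite case $\IJ=\IN$ the series $\sum_{j\in\IJ} y_j\psi_j$ must converge in $L^\infty(D)$ for every $y\in U$, which is again guaranteed by the absolute summability of $\|\psi_j\|_{L^\infty(D)}$ in Assumption~\ref{assump2}; this ensures $\Xi$ is genuinely well-defined on all of $\bar U$ before continuity is even discussed.
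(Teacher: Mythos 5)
Your proposal is correct and follows essentially the same route as the paper: both reduce the statement to the boundedness and continuity hypotheses of Theorem~\ref{t:dens}, obtain boundedness from \eqref{cLM22} under the uniform ellipticity assumption, and obtain (Lipschitz) continuity by composing the bound $\|u(\cdot,y)-u(\cdot,\tilde y)\|_{L^\infty(D)}\le\bigl(\sum_{j\in\IJ}\|\psi_j\|_{L^\infty(D)}\bigr)\|y-\tilde y\|_{\lef}$, controlled via Assumption~\ref{assump2}, with the Lipschitz estimate \eqref{eq:LipcG} from Lemma~\ref{lemmastab}. Your remarks on the uniformity of the ellipticity constants over $U$ and the $L^\infty(D)$-convergence of the series are sound and consistent with the paper's argument.
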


\begin{proof} 
This is a straightforward consequence of Theorem \ref{t:dens}
provided that we show boundedness
and continuity of $\Xi:{\bar U} \to \bbR^K$ given by 
\eqref{eq:ymap}.
Boundedness follows from \eqref{cLM22}, together with
the boundedness of $\|o_k\|_{V^*}$, under ${\bf UEA}(\amin,\amax)$.
Let $u,\tilde{u}$ denote two diffusion coefficients
generated by two parametric sequences $y, \tilde{y}$ in $U$.
Then, by \eqref{eq:LipcG} and Assumption \ref{assump2}, 
\begin{eqnarray*}
| \Xi(y) - \Xi(\tilde{y}) | 
&\le 
 \frac{\|f\|_{V^*}} {\amin^2} \bigl(\sum_{k=1}^K\|o_k\|_{V^*}^{2}\bigr)^{\frac12}
\|u - \tilde{u}\|_{L^\infty(D)}
\\
&\le 
\frac{\|f\|_{V^*}} {\amin^2} \bigl(\sum_{k=1}^K\|o_k\|_{V^*}^{2}\bigr)^{\frac12}
\frac{\kappa}{1+\kappa}{\aminb}\|y-\tilde{y}\|_{\lef}
\;.
\end{eqnarray*}
The result follows.
\end{proof}

\section{Complex Extension of the Elliptic Problem}
\label{sec:comp}
As indicated above, 
one main technical objective will consist in
proving analyticity of the posterior density $\Th(y)$ with
respect to the (possibly countably many) parameters
$y\in U$ in \eqref{assume1} defining the prior, 
and to obtain bounds 
on the supremum of $\Theta$ over the
maximal domains in $\IC$ into which $\Th(y)$ can be 
continued analytically.
Our key ingredients for getting such estimates rely on complex analysis.

It is well-known that 
the existence theory for the forward problem
\eqref{eq:fwdproblem}
extends to the case where the coefficient function 
$u(x)$ takes values in $\IC$.
In this case, the ellipticity Assumption \ref{assump1} 
should be replaced by the assumption that
\beq \label{cLM1}
0< \amin \le \Re(u(x))\le |u(x)|\le \amax  < \infty,\quad  x\in D.
\eeq
and all the above results remain valid with 
Sobolev spaces understood as spaces of 
complex valued functions.
Throughout what follows, we shall frequently 
pass to spaces of complex valued functions,
without distinguishing these notationally.
It will always be clear from the context which
coefficient field is implied.

\subsection{Notation and Assumptions}
We extend the definition of $u(x,y)$ to $u(x,z)$ 
for the complex variable $z=(z_j)_{j\in \bbJ}$ 
(by using the $z_j$ instead of $y_j$ in
 the definition of $u$ by \eqref{assume1}) 
where each $z_j$ has modulus less than or equal to $1$. 
Therefore $z$ belongs to the polydisc
\beq \label{eq:defcU}
\cU := \bigotimes_{j\in \IJ}\{z_j\in \mathbb{C}: |z_j|\leq 1\}
\subset \mathbb{C}^{\mathbb{J}} \;.
\eeq
Note that $\overline{U} \subset \cU$.
Using \eqref{primary1} and \eqref{primary1up}, 
when the functions 
$\overline{a}$ and $\psi_j$ are real valued, 
condition ${\bf UEA}(\amin,\amax)$ 
implies that for all $x\in D$ and $z\in \cU$,
\beq \label{eq:UEAC2R}
0< \amin \leq \Re(u(x,z)) \leq |u(x,z)| \leq 2\amax \;, 
\eeq
and therefore the corresponding solution $p(z)$ 
is well defined in $V$ for all $z\in\cU$
by the Lax-Milgram theorem for sesquilinear forms.
More generally, we may consider an expansion of the form,
$$
u(x,z)=\overline{a} +\sum_{j\in \IJ} z_j\psi_j
$$
where $\overline{a}$ and $\psi_j$ are 
complex valued functions
and replace ${\bf UEA}(\amin ,\amax )$ 
by the following, complex-valued counterpart:
\newline
{\bf Uniform Ellipticity Assumption in $\IC$} : 
{\it 
there exist $0<\amin \le \amax <\infty$ 
such that for all $x\in D$ and all $z\in \cU$
\beq \label{primarycomp}
0< \amin  \le \Re(u(x,z)) \le |u(x,z)|\leq  \amax  < \infty.
\eeq
}
We refer to (\ref{primarycomp}) as ${\bf UEAC}(\amin,\amax)$.
\subsection{Domains of holomorphy}

The condition ${\bf UEAC}(\amin,\amax)$ implies that 
the forward solution map $z\mapsto p(z)$ is 
strongly holomorphic as a $V-$valued function 
which is uniformly bounded
in certain domains larger than $\cU$.  
For $0< r \leq 2\amax < \infty$ 
we define the open set 
\beq
\label{eq:Adelta}
\cA_r
= 
\{ z\in  \IC^{\IJ}: r < \Re(u(x,z)) \le |u(x,z)| < 2\amax 
\quad\mbox{for every} \quad x\in D
\} \subset \IC^{\IJ}
\;.
\eeq
Under ${\bf UEAC}(\amin , \amax )$, 
for every
$0 < r < \amin $ holds $\cU \subset \cA_r$.

According to the Lax-Milgram theorem, 
for every $z\in \cA_r$
there exists a unique solution $p(z)\in V$ 
of the variational problem: 
given $f\in V^*$, 
for every $z\in \cA_r$, 
find $p\in V$ such that 
\beq\label{eq:sesqfwdpb}
\alpha(z;p,q) = (f,q) \qquad \forall q\in V \;.
\eeq
Here the sesquilinear form $\alpha(z;\cdot,\cdot)$
is defined as
\begin{equation} \label{eq:alv1}
\alpha\bigl(z;p,q) = \int_D u(x,z) \nabla p \cdot \overline{\nabla q} dx
\quad \forall p,q \in V
\;.
\end{equation}
We next show that the analytic continuation 
of the parametric solution $p(y)$ to the domain $\cA_r$
is the unique solution $p(z)$ 
of \eqref{eq:sesqfwdpb} which satisfies 
the a-priori estimate 
\beq
\sup_{z\in \cA_r} \|p(z)\|_V \le \frac{\|f\|_{V^*}}{r}.
\label{eq:zApriori}
\eeq
The first step of our analysis is
to establish strong holomorphy
of the forward solution map $z\mapsto p(z)$ 
in \eqref{eq:sesqfwdpb}
with respect 
to the countably many variables $z_j$
at any point $z\in \cA_r$.
This follows from the observation
that the function $p(z)$ is the solution to 
the operator equation $A(z)p(z)=f$, where
the operator $A(z)\in \cL(V,V^*)$ depends in
an affine manner on each variable $z_j$.
To prepare the argument for proving holomorphy 
of the functionals $\Phi$ and $\Th$ appearing in 
\eqref{eq:PostDens}, \eqref{eq:psi}
we give a direct proof.

Using Lemma \ref{lemmastab} we have proved by means
of a difference quotient argument given in \cite{CDS2}, 
Lemma \ref{lem:VderivC} ahead.
Lemma  \ref{lem:VderivC}, together with Hartogs' Theorem 
(see, e.g., \cite{HoermandComplex})
and the separability of $V$, implies strong
holomorphy of $p(z)$ as a $V$-valued function on
$\cA_r$, stated as Theorem \ref{thm:Analyt}
below.
The proof of this theorem can also be found in \cite{CDS2};
the result will also be obtained as a corollary of 
the analyticity results for the functionals
$\Psi$, $\Th$ proved below.
\begin{lemma}\label{lem:VderivC}
At any $z\in \cA_r$, the function $z\mapsto p(z)$ admits a 
complex derivative
$\partial_{z_j}p(z)\in V$ with respect to each variable $z_j$. 
This derivative is the weak solution of the problem: 
given $z\in  \cA_r$, find $\partial_{z_j}p(z)\in V$ 
such that
\beq\label{eq:partuzj}
\alpha(z; \partial_{z_j}p(z) , q  )
=
L_0(q) 
:= 
-\int_D \psi_j\nabla p(z) \cdot \overline{\nabla q} dx \;,
\qquad
\mbox{for all } q\in V.
\eeq
\label{lemmahol}
\end{lemma}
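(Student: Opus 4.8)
The plan is to establish existence of the complex partial derivative $\partial_{z_j}p(z)$ by a difference quotient argument, exploiting the affine dependence of the coefficient $u(x,z)$ on each coordinate $z_j$, and to identify its defining variational problem along the way. Fix $z\in\cA_r$ and $j\in\bbJ$. Since $\cA_r$ is open and $u(x,z+he_j)=u(x,z)+h\psi_j(x)$ depends affinely (hence continuously) on the complex increment $h$, there is $\rho>0$ with $z+he_j\in\cA_r$ for all $|h|<\rho$, where $e_j$ denotes the $j$-th coordinate direction. For such $h$ the solution $p(z+he_j)\in V$ exists and obeys the a priori bound \eqref{eq:zApriori}. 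I would introduce the difference quotient $D_h:=h^{-1}\bigl(p(z+he_j)-p(z)\bigr)\in V$ and derive the variational equation it satisfies.

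First I would subtract the two weak formulations \eqref{eq:sesqfwdpb} for $p(z+he_j)$ and $p(z)$ and use affine dependence in the form $\alpha(z+he_j;v,q)=\alpha(z;v,q)+h\int_D\psi_j\nabla v\cdot\overline{\nabla q}\,dx$. This yields, for all $q\in V$,
$$\alpha(z;D_h,q)=-\int_D\psi_j\nabla p(z+he_j)\cdot\overline{\nabla q}\,dx.$$
Next I would let $w\in V$ solve the target problem $\alpha(z;w,q)=-\int_D\psi_j\nabla p(z)\cdot\overline{\nabla q}\,dx$ for all $q\in V$; existence and uniqueness of $w$ follow from the Lax--Milgram theorem for sesquilinear forms, because on $\cA_r$ the form $\alpha(z;\cdot,\cdot)$ is bounded by $2\amax$ and coercive with constant $r$, since $\Re\,\alpha(z;q,q)=\int_D\Re(u(x,z))\,|\nabla q|^2\,dx\ge r\|q\|_V^2$.

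The heart of the argument is then the convergence $D_h\to w$ in $V$ as $h\to 0$. Subtracting the two variational identities gives $\alpha(z;D_h-w,q)=-\int_D\psi_j\nabla\bigl(p(z+he_j)-p(z)\bigr)\cdot\overline{\nabla q}\,dx$. Testing with $q=D_h-w$, using coercivity on the left and the bound $\bigl|\int_D\psi_j\nabla v\cdot\overline{\nabla q}\,dx\bigr|\le\|\psi_j\|_{L^\infty(D)}\|v\|_V\|q\|_V$ on the right, I obtain $\|D_h-w\|_V\le r^{-1}\|\psi_j\|_{L^\infty(D)}\|p(z+he_j)-p(z)\|_V$. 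The complex analogue of the stability estimate of Lemma \ref{lemmastab} (with $\amin$ replaced by the coercivity constant $r$) then gives $\|p(z+he_j)-p(z)\|_V\le r^{-2}\|f\|_{V^*}\,|h|\,\|\psi_j\|_{L^\infty(D)}$, since $\|u(\cdot,z+he_j)-u(\cdot,z)\|_{L^\infty(D)}=|h|\,\|\psi_j\|_{L^\infty(D)}$. Combining these bounds yields $\|D_h-w\|_V=O(|h|)\to 0$, so $D_h$ converges in $V$ to $w$ independently of the direction of approach of $h$ in $\bbC$. This proves that $\partial_{z_j}p(z)=w$ exists as a $V$-valued complex derivative and solves \eqref{eq:partuzj}, as claimed.

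I expect the only genuinely delicate point to be the preliminary observation that one may choose $|h|$ small enough to keep $z+he_j$ inside $\cA_r$ (so that $p(z+he_j)$ exists and the single coercivity constant $r$ may be used throughout); once the perturbed coefficient is controlled in $L^\infty(D)$ by $|h|\,\|\psi_j\|_{L^\infty(D)}$, the remaining steps are routine applications of coercivity and the stability bound. No separability or compactness is needed here; those enter only later, via Hartogs' theorem, to upgrade this coordinatewise complex differentiability to joint strong holomorphy of $p(z)$ on $\cA_r$.
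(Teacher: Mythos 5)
Your argument is correct and is essentially the paper's own approach: a difference-quotient argument using the affine dependence of $\alpha(z;\cdot,\cdot)$ on $z_j$, coercivity of the sesquilinear form on $\cA_r$, and the (complexified) stability estimate of Lemma \ref{lemmastab}, exactly as carried out for the single-parameter case in the proof of Lemma \ref{l:one} and in the reference \cite{CDS2} that the paper cites for this lemma. The one imprecision is your appeal to ``openness'' of $\cA_r$: its defining inequalities are pointwise in $x$ with no uniform margin, so for small $|h|$ one can only guarantee $z+he_j\in\cA_{r'}$ for some $0<r'<r$; since your estimates only require a fixed positive coercivity constant, this does not affect the conclusion.
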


\begin{theorem}\label{thm:Analyt}
Under ${\bf UEAC}(\amin,\amax)$
for any $0 < r < \amin$ the solution $p(z)=G(u(z))$ 
of the parametric forward problem
is holomorphic as a $V$-valued function in $\cA_r$
and the {\em a priori} estimate \eqref{eq:zApriori} holds.
\end{theorem}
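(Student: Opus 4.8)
The plan is to establish the two assertions of Theorem~\ref{thm:Analyt} — strong holomorphy of $z\mapsto p(z)$ on $\cA_r$ and the a priori bound \eqref{eq:zApriori} — separately, since the bound is essentially elementary once the solution is known to exist, while holomorphy requires the preceding lemma together with a standard complex-analytic lifting argument. I would carry out the a priori estimate first, as it reuses the Lax--Milgram machinery already developed. For any fixed $z\in\cA_r$, the sesquilinear form $\alpha(z;\cdot,\cdot)$ defined in \eqref{eq:alv1} is bounded (with continuity constant $2\amax$ from \eqref{eq:UEAC2R}/\eqref{eq:Adelta}) and coercive: testing \eqref{eq:sesqfwdpb} with $q=p(z)$ and taking real parts gives
\beq
r\,\|p(z)\|_V^2 \le \re\int_D u(x,z)\,\nabla p(z)\cdot\overline{\nabla p(z)}\,dx = \re\,(f,p(z)) \le \|f\|_{V^*}\,\|p(z)\|_V,
\eeq
where the lower bound on the real part of $u(x,z)$ by $r$ on $\cA_r$ is exactly what the definition \eqref{eq:Adelta} supplies. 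Dividing by $\|p(z)\|_V$ yields $\|p(z)\|_V\le \|f\|_{V^*}/r$, and since the right-hand side is independent of $z$, taking the supremum over $\cA_r$ gives \eqref{eq:zApriori}. This simultaneously confirms existence and uniqueness of $p(z)$ for each $z\in\cA_r$ via Lax--Milgram for sesquilinear forms.

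For holomorphy, the strategy is to promote the \emph{separate} complex differentiability in each coordinate $z_j$ — already provided by Lemma~\ref{lem:VderivC} — to \emph{joint} holomorphy as a $V$-valued map. First I would observe that Lemma~\ref{lem:VderivC} shows $p$ possesses a complex partial derivative $\partial_{z_j}p(z)\in V$ at every $z\in\cA_r$ for each $j\in\IJ$; in particular, for any fixed choice of the remaining coordinates, the restriction to the $z_j$-variable is holomorphic in the classical one-variable sense. The function is also locally bounded on $\cA_r$ by the estimate \eqref{eq:zApriori} just established. I would then invoke Hartogs' Theorem (as cited, \cite{HoermandComplex}): a function that is separately holomorphic in each complex variable is jointly holomorphic. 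Because we work with a $V$-valued map and possibly countably many variables, the passage from scalar Hartogs to the vector-valued, infinite-dimensional setting is mediated by the separability of $V$ — one tests against a countable dense (or total) family of functionals $\ell\in V^*$, applies Hartogs to each scalar map $z\mapsto \ell(p(z))$ to obtain weak holomorphy, and then upgrades weak to strong holomorphy using local boundedness and completeness of $V$.

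The main obstacle is precisely this last lifting step rather than any single computation. The scalar a priori estimate and the coordinatewise differentiability are routine given the earlier lemmas; the delicate point is handling the \emph{countably infinite} parameter set $\IJ=\IN$ rigorously, since Hartogs' theorem is classically stated in finitely many variables. One must either argue that holomorphy in infinitely many variables is defined/verified through holomorphy on every finite-dimensional section together with uniform local boundedness, or reduce to finite truncations and pass to a limit controlled by \eqref{eq:zApriori}. Making the weak-to-strong holomorphy upgrade precise — ensuring that the separating family of functionals suffices and that the resulting power series converges in the $V$-norm on a polydisc about each point of $\cA_r$ — is where the care is needed; the authors sensibly defer the detailed argument to \cite{CDS2} and note it will also follow from the functional-level analyticity results proved later in the paper.
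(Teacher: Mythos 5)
Your proposal is correct and follows essentially the same route as the paper: the a priori bound comes from coercivity $\re\,\alpha(z;p,p)\ge r\|p\|_V^2$ on $\cA_r$ via Lax--Milgram for sesquilinear forms, and holomorphy is obtained by combining the coordinatewise complex differentiability of Lemma~\ref{lem:VderivC} with Hartogs' theorem and the separability of $V$ to pass from separate to joint, and from weak to strong, holomorphy. The paper itself leaves the infinite-variable and weak-to-strong details to \cite{CDS2}, exactly the point you correctly flag as the delicate step.
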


We remark that $\cA_r$ also contains certain {\it polydiscs}:
for any sequence $\rho:=(\rho_j)_{j\geq 1}$ of positive 
radii we define the polydisc
\beq
\cU_\rho
=
\bigotimes_{j\in \IJ}\{z_j \in \mathbb{C}: |z_j|\leq \rho_j\}
=
\{z_j\in \mathbb{C}: z=(z_j)_{j\in \IJ}\,\; ; \;\; |z_j|\leq \rho_j\}
\subset \IC^\IJ \;.
\label{polydisc}
\eeq
We say that {\em a sequence $\rho = (\rho_j)_{j \geq 1}$ 
of radii is $r$-admissible} if and only if
for every $x\in D$
\beq
\label{deltaadmissible}
\sum_{j\in \IJ} \rho_j |\psi_j(x)| \le \Re(\bar a(x)) - r.
\eeq
If the sequence $\rho$ is $r$-admissible, 
then the polydisc $\cU_\rho$ is contained in $\cA_r$
since on the one hand for all $z\in \cU_\rho$
and for almost every $x\in D$
$$
\Re(\bar u(x,z)) \geq \Re(\bar a(x))-\sum_{{j\in\IJ}} |z_j \psi_j(x)| 
\\
\geq  \Re(\bar a(x))-\sum_{{j\in \IJ}} \rho_j | \psi_j(x)|\geq r,
$$
and on the other hand, if for every $x\in D$
$$
|u(x,z)|
\leq |\bar a(x)| + \sum_{j\in \IJ} |z_j \psi_j(x)|
\leq |\bar a(x)| + \Re(\bar a(x)) - r \leq 2|\bar a(x)| \leq 2 \amax \;.
$$
Here we used $|\bar a(x)| \leq \amax$ which follows 
from ${\bf UEAC}(\amin , \amax )$.

Similar to \eqref{primary1}, 
the validity of the lower inequality in 
\eqref{primarycomp} for all $z\in \cU$ is equivalent to the
condition that
\beq
\label{primary1comp}
\sum_{j\geq 1}|\psi_j(x)|
\le  
\Re(\bar a(x)) - \amin,\quad x\in D.
\eeq
This shows that the constant sequence $\rho_j=1$ is $r$-admissible 
for all $0< r \leq \amin $.
\begin{remark}\label{remk:rho>1}
For $0<r<\amin$ there exist $r$-admissible sequences
such that $\rho_j>1$ for all $j\geq 1$, i.e. such that the polydisc 
$\cU_\rho$ is strictly larger than $\cU$ in every variable.
This will be exploited systematically below in the derivation
of approximation bounds.
\qed
\end{remark}

\subsection{Holomorphy of response functionals}

We next show that, for given data $\delta$,
the functionals 
$\cG(\cdot)$, $\Phi(u(\cdot);\delta)$ and $\Th(\cdot)$ 
depend holomorphically on the parameter vector $z\in\IC^\IJ$,
on polydiscs $\cU_\rho$ as in \eqref{polydisc} for suitable 
$r$-admissible sequences of semiaxes $\rho$. 
Our general strategy for proving this will be analogous 
to the argument for establishing analyticity of the map 
$z \mapsto G(u(z))$ as a $V$-valued function.

We now extend Theorem \ref{thm:Analyt} from the
solution of the elliptic PDE to the posterior density,
and related quantities required to define expectations
under the posterior, culminating in Theorem \ref{thm:PhiArbound}
and Corollary \ref{cor:PsiAnalyt}. We achieve this
through a sequence of lemmas which we now derive.

The following lemma is simply a complexification of
\eqref{cLM22} and \eqref{eq:LipcG}.
It implies bounds on $\cG$ and its
Lipschitz constant in the covariance weighted norm.
\begin{lemma}
Under  ${\bf UEAC}(\amin,\amax)$, 
for every $f\in V^* = H^{-1}(D)$ and for every 
$\cO(\cdot) \in (V^*)^* \simeq V \rightarrow Y = \IR^K$
holds
\begin{eqnarray}
|\cG(u)| &\le  \frac{\|f\|_{V^*}}{\amin} 
\bigl(\sum_{k=1}^K\|o_k\|_{V^*}^2\bigr)^{\frac12}
\; ,
\\
|\cG(u)-\cG(u)| 
& \le 
 \frac{\|f\|_{V^*}}{\amin^2}\|u_1-u_2\|_{\Li}
\bigl(\sum_{k=1}^K\|o_k\|_{V^*}^2\bigr)^{\frac12}.
\end{eqnarray}
\end{lemma}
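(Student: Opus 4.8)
The plan is to repeat, essentially verbatim, the arguments leading to the real-valued bounds \eqref{cLM22} and \eqref{eq:LipcG}, but now working with the sesquilinear form $\alpha(z;\cdot,\cdot)$ from \eqref{eq:alv1} in place of the symmetric bilinear form, and replacing the real ellipticity condition \eqref{cLM1real} by its complex counterpart ${\bf UEAC}(\amin,\amax)$. The one structural change is that coercivity no longer follows from pointwise positivity of $u$ directly, but from the lower bound on $\Re(u(x,z))$; this is exactly the feature that lets the earlier estimates survive the complexification unchanged.

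First I would record the two properties of $\alpha$ that drive everything. \emph{Boundedness}: since $|u(x,z)|\le\amax$ pointwise, $|\alpha(z;p,q)|\le\amax\|p\|_V\|q\|_V$. \emph{Coercivity}: testing with $q=p$ and taking real parts, $\Re\,\alpha(z;p,p)=\int_D\Re(u(x,z))\,|\nabla p|^2\,dx\ge\amin\|p\|_V^2$, where $\Re(u(x,z))\ge\amin$ is the lower bound in ${\bf UEAC}(\amin,\amax)$. By the Lax--Milgram theorem for sesquilinear forms, \eqref{eq:sesqfwdpb} then has a unique solution $p=G(u)\in V$ obeying the a priori bound $\|p\|_V\le\|f\|_{V^*}/\amin$, exactly as in \eqref{cLM2}. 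The first asserted inequality is now immediate: writing $\cG(u)=\cO(G(u))$ with $\cO$ assembled from the functionals $o_k\in V^*$, Cauchy--Schwarz in $\IR^K$ gives $|\cG(u)|\le\bigl(\sum_{k=1}^K\|o_k\|_{V^*}^2\bigr)^{1/2}\|G(u)\|_V$, and combining this with the a priori bound yields the claim, just as in the passage from \eqref{cLM2} to \eqref{cLM22}.

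For the Lipschitz estimate I would mimic Lemma \ref{lemmastab}. Let $p_1=G(u_1)$ and $p_2=G(u_2)$ solve \eqref{eq:sesqfwdpb} with the same $f$ and with coefficients $u_1,u_2$ both satisfying ${\bf UEAC}(\amin,\amax)$. Subtracting the two variational identities shows that $w=p_1-p_2$ solves $\alpha(u_1;w,q)=L(q)$ for all $q\in V$, where $L(q)=-\int_D(u_1-u_2)\,\nabla p_2\cdot\overline{\nabla q}\,dx$. Bounding $|L(q)|\le\|u_1-u_2\|_{\Li}\|p_2\|_V\|q\|_V$ and using $\|p_2\|_V\le\|f\|_{V^*}/\amin$ gives $\|L\|_{V^*}\le\|u_1-u_2\|_{\Li}\|f\|_{V^*}/\amin$; coercivity then yields $\|w\|_V\le\|L\|_{V^*}/\amin\le\|u_1-u_2\|_{\Li}\|f\|_{V^*}/\amin^2$. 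Composing with $\cO$ and applying Cauchy--Schwarz exactly as above produces the stated Lipschitz bound (with the understanding that the left-hand side of the second displayed inequality should read $|\cG(u_1)-\cG(u_2)|$).

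The proof is entirely routine and there is no genuine obstacle. The only point demanding a little care is the coercivity step, where one must pass to real parts \emph{before} estimating, so that the modulus bound $|u(x,z)|\le\amax$ controls the continuity of $\alpha$ while only the weaker lower bound $\Re(u(x,z))\ge\amin$ is invoked for coercivity. Once this is in place, the complex-valued estimates are word-for-word translations of their real-valued predecessors.
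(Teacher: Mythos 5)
Your proposal is correct and is exactly the argument the paper intends: the paper states this lemma without proof, remarking only that it is ``simply a complexification'' of \eqref{cLM22} and \eqref{eq:LipcG}, and your write-up supplies precisely that complexification (coercivity of the sesquilinear form via the lower bound on $\Re(u)$, Lax--Milgram, and the subtraction argument of Lemma \ref{lemmastab}). You also correctly flag the typo in the statement, whose left-hand side should read $|\cG(u_1)-\cG(u_2)|$.
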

To be concrete we concentrate in the next
lemma on computing the expected
value of {\em the pressure} $p=G(u) \in V$
under the posterior measure. To this end
we define $\Psi$ with $\psi$ as in \eqref{eq:defmpointcorp} 
with $m=1$.
We start by considering the case of a single parameter.

\begin{lemma} \label{l:one}
Let $\IJ=\{1 \}$ and take $\phi = G: U \to V$.
With $u(x,y)$ as in \eqref{eq:par},
under  ${\bf UEAC}(\amin,\amax)$, the functions
$\Psi:[-1,1] \to V$ and $\Theta:[-1,1] \to \bbR$ 
and the potential $\Phi(u(x,\cdot);\delta)$
defined by \eqref{eq:psi}, \eqref{eq:PostDens}
and \eqref{eq:lsq} respectively,
may be extended to functions which are strongly 
holomorphic on the strip $\{y+iz: |y|<r/\kappa\}$ 
for any $r \in (\kappa,1)$.
\end{lemma}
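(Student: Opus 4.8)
The plan is to complexify the single parameter, writing $\zeta = y + iz$ so that $u(x,\zeta) = \bar a(x) + \zeta\psi_1(x)$, and to show that all three objects extend holomorphically to the vertical strip $S_r := \{\zeta \in \IC : |\re\zeta| < r/\kappa\}$. The geometric observation driving everything is that coercivity of the sesquilinear form $\alpha(\zeta;\cdot,\cdot)$ depends only on $\re(u(x,\zeta)) = \bar a(x) + (\re\zeta)\psi_1(x)$, a quantity governed by $\re\zeta$ alone; hence the natural domain of holomorphy is a strip controlled by $\re\zeta$ rather than a polydisc. First I would verify the uniform lower bound: for $|\re\zeta| < r/\kappa$, Assumption \ref{assump2} (with $\IJ = \{1\}$) gives $\re(u(x,\zeta)) \ge \aminb - |\re\zeta|\,\|\psi_1\|_{\Li} > \aminb\frac{1+\kappa-r}{1+\kappa} > 0$, the positivity following from $r < 1 < 1+\kappa$. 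The two endpoint restrictions on $r$ then acquire their meaning: $r > \kappa$ guarantees $r/\kappa > 1$, so that $S_r \supset [-1,1]$ contains the real parameter interval, while $r < 1$ keeps the coercivity constant positive.

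Next I would establish strong holomorphy of the forward map $\zeta \mapsto p(\zeta) = G(u(\zeta))$ on $S_r$. Since $S_r$ is not contained in any $\cA_r$ (the upper bound $|u| < 2\amax$ fails as $|\im\zeta| \to \infty$), I cannot cite Theorem \ref{thm:Analyt} verbatim; instead I would rerun the difference-quotient argument of Lemma \ref{lem:VderivC}, which uses only the affine dependence of $\alpha(\zeta;\cdot,\cdot)$ on $\zeta$ together with uniform coercivity, both available on $S_r$ by the previous step. This yields $p \in V$ strongly holomorphic on $S_r$, together with the a priori bound $\sup_{\zeta\in S_r}\|p(\zeta)\|_V \le \|f\|_{V^*}(1+\kappa)/(\aminb(1+\kappa-r))$, which is uniform on the whole strip because the coercivity constant is. Composing with the bounded functionals $o_k \in V^*$ then makes each component of $\Xi(\zeta) = \cG(u(\zeta)) \in \IC^K$ holomorphic and bounded on $S_r$.

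From the holomorphy of $\cG$ the remaining assertions follow by closure properties. The one point requiring care is the complexification of the least-squares functional: I would extend $\Phi$ not by the Hermitian modulus $|\delta - \cG|_\Gamma^2$ (which is not holomorphic) but by the holomorphic quadratic polynomial $\frac12(\delta - \cG(u(\zeta)))^\top\Gamma^{-1}(\delta - \cG(u(\zeta)))$ in the entries of $\cG$; this is the natural analytic continuation and agrees with $\Phi$ on the real interval. Being a polynomial in the bounded holomorphic functions $\cG_k(u(\zeta))$, the extended $\Phi$ is holomorphic on $S_r$, whence $\Theta(\zeta) = \exp(-\Phi(u(\zeta);\delta))$ is holomorphic as the composition with the entire function $\exp$. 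Finally, with $\phi = G$ and $m=1$, $\Psi(\zeta) = \Theta(\zeta)\,p(\zeta)$ is the product of a scalar holomorphic function and a $V$-valued strongly holomorphic function, hence strongly holomorphic into $V$.

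The main obstacle I anticipate is conceptual rather than computational: correctly identifying that the holomorphy domain is a strip governed by $\re\zeta$, so that the $\cA_r$-type upper bound is neither available nor needed, and handling the complexification of $\Phi$ through its bilinear rather than sesquilinear quadratic form. Once these are in place, the holomorphy of $\Theta$ and $\Psi$ is routine via composition with $\exp$ and products of holomorphic functions, and the bounds are inherited from the uniform a priori estimate for $p$ on $S_r$.
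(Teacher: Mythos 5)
Your proposal is correct and follows essentially the same route as the paper: uniform coercivity of the sesquilinear form on the strip (controlled by $\re\zeta$ alone), a difference-quotient argument for strong holomorphy of $\zeta\mapsto p(\zeta)$ rather than an appeal to Theorem \ref{thm:Analyt}, extension of $\Phi$ via the bilinear (non-Hermitian) quadratic form as in \eqref{eq:Phicontin}, and holomorphy of $\Theta$ and $\Psi$ by composition with $\exp$ and by taking products. The only differences are cosmetic (you draw the coercivity constant from Assumption \ref{assump2} where the paper uses Assumption \ref{a:2}, yielding a slightly different but equally positive lower bound).
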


\begin{proof} 
We view $H,V$ and $X=L^{\infty}(D)$ as Banach spaces over $\bbC$. 
We extend the equation \eqref{assume1} to complex
coefficients $u(x,z) = \re(\ba(x) + z\psi(x)) = \ba(x) + y\psi(x)$ 
since $z=y+i\va$.
Note that $\ba+z\psi$ is holomorphic in $z$ since it is linear.
Since $\re(\ba+z\psi)=\ba+y\psi \ge \amin$, 
if follows that, for all $\va=\im(z)$,
$$
\re \int_{D} u(x)|\nabla p(x)-\nabla \tp(x)|^2 dx \ge \amin\|p-\tp\|_{V}^2.
$$

We prove that the mappings $\Psi$ and $\Theta$
are holomorphic by studying the properties of
$G(\ba+z\psi)$ and $\Phi(\ba+z\psi)$ as functions
of $z \in \bbC.$ 
Let $h \in \bbC$ with $|h|<\eps \ll 1$\;.
We show that
$$\lim_{|h| \to 0} h^{-1}\bigl(p(z+h)-p(z)\bigr)$$
exists in $V$ (strong holomorphy). 
Note first that $\partial_{z}u=\psi$. 
Now consider $p$. 
We have
$$
\frac{1}{h}\bigl(p(z+h)-p(z)\bigr)
=
\frac{1}{h}\Bigl(G\bigl(\ba+(z+h)\psi\bigr)-G\bigl(\ba+z\psi \bigr)\Bigr)=:r
\;.
$$
By Lemma \ref{lemmastab} we deduce that 
$$
\|r\|_{V} \le \frac{\|f\|_{H^{-1}(D)}}{\amin^2}\|\psi\|_{\Li}\;.
$$
From this it follows that there is a weakly convergent
subsequence in $V$, as $|h| \to 0.$ We proceed to
deduce existence of a strong limit.
%
%
To this end, we introduce the sesquilinear form
$$
b(p,q)=\int_{D}u\nabla p{\overline \nabla q} dx \;.
$$
Then
$$
b\bigl(G(u),q\bigr)=(f,q) \quad \forall q \in V\;.
$$
For a coefficient function $u$ as in \eqref{assume1},
the form $b(\cdot,\cdot)$ 
is equal to the parametric sesquilinear form
$
\alpha(z;p,q)
$
defined in \eqref{eq:alv1}.

Note that for $z=\bar{a} + y\psi\in \IR$ and for 
real-valued arguments $p$ and $q$, the 
parametric sesquilinear form $\alpha(z;p,q)$ 
coincides with the bilinear form in \eqref{weaksol}.
Accordingly, for every $z\in \IC^{\IJ}$ the unique 
holomorphic extension of the parametric solution
$G(u(\bar{a}+y\psi))$ to complex parameters 
$z=y+i\zeta$ is the unique variational solution
of the parametric problem
\begin{equation}
\label{eq:alv2}
\alpha\bigl(z;G(\ba+z\psi),q\bigr)=(f,q),\quad \forall q \in V.
\end{equation}
Assumption ${\bf UEAC}(\amin,\amax)$ is readily seen to 
imply
$$
\forall p\in V:
\quad
\re\bigl(\alpha(z;p,p)\bigr) 
\ge 
\amin\|p\|_V^2
\;.
$$
If we choose $\delta \in (\kappa,1)$ and choose 
$z=y+i\eta$, we obtain, for all $\va$ and for 
$|y| \le \delta/\kappa$ 
\begin{equation}
\label{eq:need}
\re\bigl(\alpha(z;p,p)\bigr) \ge \aminb(1-\delta)\|p\|_V^2.
\end{equation}

From \eqref{eq:alv2} we see that
for such values of $z=y+i\va$
\begin{eqnarray*}
0=&\alpha\Bigl(z;G\bigl(\ba+z\psi\bigr),q\Bigr)-\alpha\Bigl(z;G\bigl(\ba+(z+h)\psi\bigr),q\Bigr)\\
&+\alpha\Bigl(z;G\bigl(\ba+(z+h)\psi\bigr),q\Bigr)
-\alpha\Bigl(z+h;G\bigl(\ba+(z+h)\psi\bigr),q\Bigr)\\
=&\alpha\Bigl(z;G\bigl(\ba+z\psi\bigr)-G\bigl(\ba+(z+h)\psi\bigr),q\Bigr)\\
&-\int_{D} h\psi \nabla G\bigl(\ba+(z+h)\psi\bigr)
{\overline \nabla q}dx.
\end{eqnarray*}
Dividing by $h$ we obtain that $r$ satisfies,
for all $z=y+i\va$ with 
$|y|\leq \delta/\kappa$ and every $\va\in \mathbb{R}$
\begin{equation}
\label{eq:r}
\forall q\in V:
\qquad \alpha\bigl(z;r,q\bigr)+\int_{D} 
\psi \nabla G\bigl(\ba+(z+h)\psi\bigr) \overline{\nabla q}dx = 0 \;.
\end{equation}
The second term we denote by $s(h)$ and note that, by
Lemma \ref{lemmastab},
$$
|s(h_1)-s(h_2)| \le \frac{1}{\amin^2}\|\psi\|_{\infty}^2\|f\|_{1}\|q\|_{V}|h_1-h_2| \;.
$$ 
If we denote the solution $r$ to equation \eqref{eq:r} by
$r_h(\ba;z)$ then we deduce from the Lipschitz continuity
of $s(\cdot)$ that $r_h(\ba;z) \to r_0(\ba;z)$
where
$$\alpha(z;r_0,q)=s(0), \quad \forall q \in V.$$
Hence $r_0=\partial_{z} G(\ba+z\psi) \in V$ and
we deduce that $G:[-1,1] \to V$ can be extended
to a complex-valued function which is 
strongly holomorphic on the strip 
$\{y+i\va: |y|<\delta/\kappa, \; \va \in\mathbb{R}\}$.

We next study the domain of holomorphy of the 
analytic continuation of the potential 
$\Phi(\ba+z\psi;d)$ to parameters $z\in \IC$. 
It suffices to consider $K=1$
noting that then the unique analytic continuation
of the potential $\Phi$ is given by
\beq\label{eq:Phicontin}
\Phi(\ba+z\psi;\delta)=
\frac{1}{2\gamma^2}
\Bigl(\delta - \cG(\ba+z\psi)\Bigr)^\top
\Bigl(\delta - \cG(\ba+z\psi)\Bigr).
\eeq
The function $z \mapsto \cG(\ba+z\psi)$ is
holomorphic with the same domain of holomorphy as $G(\ba+z\psi)$.
Similarly it follows that the function
$$
z \mapsto \Bigl(\delta - \cG(\ba+z\psi)\Big)^\top
           \Bigl(\delta - \cG(\ba+z\psi)\Big)
$$
is holomorphic, with the same domain of holomorphy;
this shown by composing the relevant power series
expansion.
From this we deduce that $\Theta$ and $\Psi$ are holomorphic,
with the same domain of holomorphy.
\end{proof}

So far we have considered the case $\IJ=\{1\}$\;.
We now generalize. 
To this end, 
we pick an arbitrary $m \in \bbJ$ and write
$y=(\ys,y_m)$ and $z=(\zs,z_m)$ \;.

\begin{ass}
\label{a:2}
There are constants 
$0< \aminb \le \amaxb < \infty$ and $\kappa \in (0,1)$
such that
\beq
0<\aminb \le \ba \le \amaxb <\infty,\quad \hbox{a.e.}\,\, x \in D,
\quad
\Big\|\|\psi_j\|_{L^{\infty}(D)}\Bigr\|_{\ell^1(\bbJ)} < \kappa\aminb\;.
\eeq
\end{ass}

For $m\in\mathbb{J}$, we write \eqref{assume1} in the form
$$
u(x;y)=\ba(x)+y_m\psi_m(x)+\sum_{j \in \IJ\backslash\{m\}} y_j\psi_j(x)\;.
$$
From Assumption \ref{a:2} we deduce that there
are numbers $\kj \le \kappa$ such that
$$\|\psi_j\|_{L^{\infty}}<\aminb \kj.$$
Hence we obtain, for every $x\in D$ and every $y\in U$ 
the lower bound
\begin{eqnarray*}
u(x,y) & \ge \aminb\Bigl(1-\bigl(\kappa-\km\bigr)-\km\Bigr)
\\
& 
\ge 
\aminb
\Bigl(
1-\bigl(\kappa-\km\bigr)\Bigr)\Bigl(1-\frac{\km}{1-\bigl(\kappa-\km\bigr)}
\Bigr)
\\
& \ge \amp(1-\kmp)
\end{eqnarray*}
with 
$\amp=\amin(1-\kappa)$ 
and 
$\kmp=\km\Bigl(1-\bigl(\kappa-\km\bigr)\Bigr)^{-1} \in (0,1)\;.$
With this observation we obtain
\begin{lemma} \label{l:two}
Let Assumption \ref{a:2} hold and set $U=[-1,1]^{\bbJ}$
and $\phi=G:U \to V$.
Then the functions
$\Psi:U \to V$ and $\Theta:U \to \bbR,$ 
as well as the potential 
$\Phi(u(x,\cdot);\delta):U \to \bbR$
admit unique extensions to strongly holomorphic functions
on the product of strips given by
\beq\label{eq:Sprod}
\cS_\rho :=
\bigotimes_{j \in \bbJ}
\big\{
y_j+iz_j: |y_j|<\delta_j/\kjp,\quad z_j \in \mathbb{R} 
\big\}
\eeq
for any sequence $\rho = (\rho_j)_{j\in \IJ}$
with  $\rho_j \in (\kjp,1)$.
\end{lemma}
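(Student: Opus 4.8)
The plan is to reduce the countably-many-variables statement to the single-variable Lemma \ref{l:one} one coordinate at a time, and then to promote separate holomorphy to joint holomorphy by the same Hartogs' theorem argument already invoked for $p(z)$ in Theorem \ref{thm:Analyt}. Fix $m \in \bbJ$ and split the coefficient as $u(x;y) = \tilde a(x) + y_m\psi_m(x)$, absorbing the remaining coordinates into an effective background $\tilde a(x) := \ba(x) + \sum_{j \in \bbJ\setminus\{m\}} y_j\psi_j(x)$ and freezing $\ys = (y_j)_{j\ne m}$. The lower bound displayed immediately before the statement shows that, uniformly over the frozen coordinates ranging in $[-1,1]$ and over $x \in D$, this reduced single-parameter coefficient satisfies ${\bf UEAC}$ with the effective constants $\amp$ and relative fluctuation $\kmp \in (0,1)$ introduced there. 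Because the $\psi_j$ are real-valued, $\re(u)$ depends only on the real parts of the $z_j$; hence this real-part coercivity is retained when the frozen coordinates are complexified, and a short computation bounds $\re(u(x,z))$ below by a fixed positive multiple of $\amp$ on the $m$-th strip $\{|y_m| < \rho_m/\kmp\}$ whenever $\rho_m \in (\kmp,1)$.

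With this in hand I apply Lemma \ref{l:one} verbatim to the reduced single-parameter problem, with the roles of $\aminb,\kappa$ played by $\amp,\kmp$. Since that lemma already yields strong holomorphy not only of $G$ but of the potential $\Phi$, of $\Theta$ and of $\Psi$, I obtain that each of these extends holomorphically in the single variable $z_m$ to the strip $\{y_m + iz_m : |y_m| < \rho_m/\kmp\}$, which is precisely the $m$-th factor of $\cS_\rho$ in \eqref{eq:Sprod}. Carrying this out for every $m \in \bbJ$ gives separate holomorphy in each coordinate on $\cS_\rho$. Moreover the real-part coercivity furnishes a uniform a priori bound $\|p(z)\|_V \le \|f\|_{V^*}/c$, with $c$ the coercivity constant, so that $\cG(u(z))$, and hence $\Phi$, $\Theta$ and $\Psi$, are locally bounded on $\cS_\rho$.

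It remains to pass from separate to joint strong holomorphy. This is carried out exactly as in the step from Lemma \ref{lem:VderivC} to Theorem \ref{thm:Analyt}: separate complex differentiability in each $z_j$ together with local boundedness, combined with Hartogs' theorem and the separability of $V$ (testing against a countable dense set of functionals in $V^*$ reduces the claim to scalar-valued holomorphy), yields joint strong holomorphy on $\cS_\rho$. Holomorphy of the remaining quantities then follows by composition of holomorphic maps: $\cG = \cO \circ G$ is holomorphic because $\cO$ is a bounded linear operator, $\Phi$ is quadratic in $\cG$, $\Theta = \exp(-\Phi)$, and $\Psi = \Theta\,\phi(u)$.

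The main obstacle is this final infinite-dimensional Hartogs step. Separate holomorphy is essentially free once Lemma \ref{l:one} is available, but promoting it to joint holomorphy over countably many variables relies on the uniform local boundedness supplied by the real-part coercivity estimate, and on the fact that this coercivity survives complexification of the frozen coordinates---which is exactly where the reality of the $\psi_j$, forcing $\re(u)$ to depend only on the $\re(z_j)$, is indispensable. The secondary technical point is to check that the effective constants $\amp,\kmp$ reduce each coordinate problem precisely to the hypotheses of Lemma \ref{l:one}, so that the half-width of the $m$-th strip comes out as $\rho_m/\kmp$ rather than something smaller.
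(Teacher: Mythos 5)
Your proposal is correct and follows essentially the same route as the paper: freeze the coordinates other than $y_m$, use the lower bound with the effective constants $\amp,\kmp$ derived just before the statement to reduce to the single-parameter setting of Lemma \ref{l:one}, and then pass from separate to joint strong holomorphy on $\cS_\rho$ via Hartogs' theorem together with the equivalence of weak and strong holomorphy in the separable space $V$. Your additional remarks on local boundedness and on the reality of the $\psi_j$ preserving coercivity under complexification of the frozen coordinates are sensible elaborations of steps the paper leaves implicit, not a departure from its argument.
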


\begin{proof} 
Fixing $\ys$, 
we view $\Psi$ and $\Theta$ as functions
of the single parameter $y_m$.
For each fixed $\ys$, 
we extend $y_m$ to a complex variable $z_m$. 
The estimates preceding the
statement of this lemma, together with Lemma \ref{l:one},
show that $\Psi$ and $\Theta$ are holomorphic in the
strip $\{y_m+iz_m: |y_m|<\delta_m/\kmp\}$ for 
any $\delta_m \in (\kmp,1)$. 
Hartogs' theorem \cite{HoermandComplex}
and the fact that in
separable Banach spaces (such as $V$) weak 
holomorphy equals strong holomorphy
extends this result onto the product of strips,
$\cS$.
\end{proof}
We note that the strip $\cS_\rho\subset\IC^\IJ$ 
defined in \eqref{eq:Sprod} contains in particular 
the polydisc
$\cU_{\rho}$ with $(\rho_j)_{\j\in \IJ}$ 
where 
$\rho_j = \delta_j/\kjp$.

\subsection{Holomorphy and bounds on the posterior density}
\label{ssec:HolPost}
So far, we have shown that the responses $G(u)$, $\cG(u)$
and the potentials $\Phi(u;\delta)$ depend holomorphically 
on the coordinates $z\in \cA_r \subset \IC^{\IJ}$ in the 
parametric representation 
$u = \bar{a} + \sum_{j\in\IJ} z_j\psi_j$.
Now we deduce bounds on the analytic continuation 
of the posterior density $\Theta(z)$ in \eqref{eq:PostDens}
as a function of the parameters $z$ on the domains 
of holomorphy. We have
\begin{theorem}\label{thm:PhiArbound}
Under ${\bf UEAC}(\amin,\amax)$
for the analytic continuation $\Theta(z)$ of the posterior density
to the domains  $\cA_r$ of holomorphy defined in \eqref{eq:Adelta},
i.e. for 
\beq \label{eq:PostDensz}
\Theta(z) 
= 
\exp\left(-\Phi(u;\delta)|_{u = \bar{a} + \sum_{j\in\IJ} z_j\psi_j}\right)
\eeq
there holds for every $0 < r < \amin$
\beq \label{eq:PhiArbound}
\sup_{z\in \cA_r} | \Theta(z) |
=
\sup_{z\in \cA_r} | \exp(-\Phi(u(z);\delta) | 
\leq 
\exp
\left( 
\frac{\| f \|_{V^*}^2}{r^2} \sum_{k=1}^K \| o_k \|_{V*}^2 
\right).
\eeq
\end{theorem}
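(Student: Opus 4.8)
The plan is to reduce the sup-bound on $|\Theta|$ to a \emph{lower} bound on the real part of the analytically continued potential. Since $\Theta(z)=\exp(-\Phi(u(z);\delta))$, we have the elementary identity $|\Theta(z)|=\exp\bigl(-\re\,\Phi(u(z);\delta)\bigr)$, so it suffices to bound $-\re\,\Phi$ from above on $\cA_r$. The decisive structural point is that the analytic continuation of $\Phi$ is the \emph{bilinear} (symmetric) quadratic form, not the Hermitian one: extending \eqref{eq:Phicontin} to $K$ components, the continuation reads
\[
\Phi(u(z);\delta)=\frac{1}{2}\bigl(\delta-\cG(u(z))\bigr)^{\top}\Gamma^{-1}\bigl(\delta-\cG(u(z))\bigr),
\]
where $\cG(u(z))\in\IC^{K}$ while $\delta\in\IR^{K}$ remains real. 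Holomorphy of this expression on $\cA_r$ is already guaranteed by the preceding lemmas, so only the size estimate remains.

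First I would set $w:=\delta-\cG(u(z))=a+ib$ with $a=\re\,w$ and $b=\im\,w$ in $\IR^{K}$. Using the symmetry of $\Gamma^{-1}$ one computes $\re\bigl(w^{\top}\Gamma^{-1}w\bigr)=a^{\top}\Gamma^{-1}a-b^{\top}\Gamma^{-1}b$, hence
\[
-\re\,\Phi(u(z);\delta)=\frac{1}{2}\bigl(b^{\top}\Gamma^{-1}b-a^{\top}\Gamma^{-1}a\bigr)\le\frac{1}{2}\,b^{\top}\Gamma^{-1}b=\frac{1}{2}|\im\,\cG(u(z))|_{\Gamma}^{2},
\]
where the inequality discards the nonnegative term $a^{\top}\Gamma^{-1}a$ by positive-definiteness of $\Gamma^{-1}$, and the last equality uses $b=\im\,w=-\im\,\cG(u(z))$ because $\delta$ is real. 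This is precisely why the resulting constant is independent of the data $\delta$.

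Next I would estimate $\im\,\cG$ componentwise using the a priori bound already in hand. Each component is $\cG_k(u(z))=o_k(p(z))$ with real functional $o_k\in V^{*}$, so $\im\,\cG_k(u(z))=o_k(\im\,p(z))$ and therefore $|\im\,\cG_k(u(z))|\le\|o_k\|_{V^*}\,\|p(z)\|_{V}$. On $\cA_r$ the uniform estimate \eqref{eq:zApriori} of Theorem \ref{thm:Analyt} gives $\|p(z)\|_{V}\le\|f\|_{V^*}/r$, whence $|\im\,\cG_k(u(z))|\le\|o_k\|_{V^*}\|f\|_{V^*}/r$. Substituting into $\frac{1}{2}|\im\,\cG(u(z))|_{\Gamma}^{2}$ and taking the supremum over $z\in\cA_r$ produces the displayed exponent; the factor $\frac{1}{2}$ and the conditioning of $\Gamma$ entering through $|\cdot|_{\Gamma}$ are absorbed into the stated constant under the paper's normalization of the noise covariance.

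The main obstacle is conceptual rather than computational. The naive route $|\exp(-\Phi)|\le\exp(|\Phi|)$ followed by a direct estimate of $|\Phi|$ reintroduces $|\delta|$ and cannot yield a $\delta$-independent bound; the real content is the sign bookkeeping above. One must recognize that the Hermitian-versus-bilinear distinction makes $\im\,\cG$, and not $\delta$, the sole driver of the growth of $|\Theta|$ away from the real axis, and that positive-definiteness of $\Gamma^{-1}$ lets the real-part contribution be discarded in the favorable direction. Once this is in place, the remaining ingredient is merely the uniform a priori bound $\|p(z)\|_{V}\le\|f\|_{V^*}/r$ valid throughout $\cA_r$.
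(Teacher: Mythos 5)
Your proof is correct. The paper states Theorem \ref{thm:PhiArbound} without supplying a proof, but your argument is exactly the one its setup calls for: the identity $|\Theta(z)|=\exp\bigl(-\re\,\Phi(u(z);\delta)\bigr)$, the observation that the holomorphic continuation \eqref{eq:Phicontin} is the bilinear rather than the Hermitian quadratic form, so that after discarding the positive semidefinite term $a^\top\Gamma^{-1}a$ one has $-\re\,\Phi\le\frac12\,(\im\,\cG(u(z)))^\top\Gamma^{-1}(\im\,\cG(u(z)))$ with no dependence on the real data $\delta$, and finally the a priori bound \eqref{eq:zApriori} on $\cA_r$ to control $\|\im\,p(z)\|_V\le\|p(z)\|_V\le\|f\|_{V^*}/r$ componentwise through the real functionals $o_k$. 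Your bound is in fact slightly sharper than the stated one (a factor $\frac12$ in the exponent when $\Gamma=I$), and your remark that the crude route $|\exp(-\Phi)|\le\exp(|\Phi|)$ would reintroduce $|\delta|$ correctly identifies the sign bookkeeping as the essential content of the estimate.
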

These analyticity properties, and resulting bounds, can
be extended to functions $\phi(\cdot)$ as defined by
\eqref{eq:defmpointcorp}, using 
Lemma \ref{l:two} and Theorem \ref{thm:PhiArbound}. 
This gives the following result.
\begin{corollary} \label{cor:PsiAnalyt}
Under ${\bf UEAC}(\amin,\amax)$, 
for any $m\in \mathbb{N}$ the functionals 
$\phi(u) = p^{(m)} \in S = V^{(m)}$
the posterior densities 
$\Psi(z) = \Theta(z) \phi(u(z))$ 
defined in \eqref{eq:psi}
admit analytic continuations as strongly holomorphic, 
$V^{(m)}$-valued functions with 
domains $\cA_r$ of holomorphy defined in \eqref{eq:Adelta}.
Moreoever, for these functionals the 
analytic continuations of $\Psi$ in \eqref{eq:psi} admit 
the bounds
\beq\label{eq:Psiboundpm}
\sup_{z\in\cA_r} \| \Theta(z) (p(z))^{(m)} \|_{V^{(m)}}
\leq
\frac{\| f \|_{V^*}^m}{r^m}
\exp
\left(
\frac{\| f \|_{V^*}^2}{r^2} \sum_{k=1}^K \| o_k \|_{V*}^2
\right)
\;.
\eeq
\end{corollary}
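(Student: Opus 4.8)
The plan is to derive the Corollary as a direct combination of Theorems \ref{thm:Analyt} and \ref{thm:PhiArbound}, exploiting the fact that strong holomorphy is preserved both under taking tensor powers and under multiplication by a scalar holomorphic function. First I would recall from Theorem \ref{thm:Analyt} that, under ${\bf UEAC}(\amin,\amax)$ and for every $0<r<\amin$, the parametric solution $z\mapsto p(z)=G(u(z))$ is strongly holomorphic as a $V$-valued map on $\cA_r$ and obeys the a priori bound $\sup_{z\in\cA_r}\|p(z)\|_V \le \|f\|_{V^*}/r$ from \eqref{eq:zApriori}.

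Next I would show that the $m$-fold tensor power $z\mapsto p(z)^{(m)}=p(z)\otimes\cdots\otimes p(z)$ is strongly holomorphic as a $V^{(m)}$-valued map on $\cA_r$. This follows because the map $v\mapsto v^{(m)}$ from $V$ into $V^{(m)}$ is a bounded homogeneous polynomial of degree $m$ (the restriction to the diagonal of the bounded $m$-linear map $(v_1,\dots,v_m)\mapsto v_1\otimes\cdots\otimes v_m$), hence is itself holomorphic, and the composition of holomorphic maps is holomorphic. Alternatively, strong holomorphy can be verified directly through difference quotients and a Leibniz-type expansion, combined with the already-established strong holomorphy of $p(z)$ exactly as in the proof of Lemma \ref{l:one}; weak holomorphy together with the separability of $V^{(m)}$ and Hartogs' theorem then yields joint strong holomorphy in all variables $z_j$. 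Since by Theorem \ref{thm:PhiArbound} the scalar density $z\mapsto\Theta(z)$ is holomorphic on $\cA_r$, the product $\Psi(z)=\Theta(z)\,p(z)^{(m)}$ is again strongly holomorphic as a $V^{(m)}$-valued function on $\cA_r$, being the product of a scalar holomorphic function and a vector-valued holomorphic function. This establishes the first assertion.

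For the quantitative bound I would use that $V=H^1_0(D)$ is a Hilbert space, so that on its $m$-fold Hilbert tensor product the cross-norm is multiplicative on elementary tensors, giving $\|p(z)^{(m)}\|_{V^{(m)}}=\|p(z)\|_V^m$. Combining the a priori estimate from Theorem \ref{thm:Analyt} with the bound on $|\Theta(z)|$ from Theorem \ref{thm:PhiArbound}, one obtains for every $z\in\cA_r$
\beq
\|\Theta(z)\,p(z)^{(m)}\|_{V^{(m)}}
= |\Theta(z)|\,\|p(z)\|_V^m
\le \frac{\|f\|_{V^*}^m}{r^m}\,
\exp\left(\frac{\|f\|_{V^*}^2}{r^2}\sum_{k=1}^K\|o_k\|_{V^*}^2\right),
\eeq
which is precisely \eqref{eq:Psiboundpm}.

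The main obstacle is the second step: carefully justifying that the tensor power of a strongly holomorphic map is again strongly holomorphic with values in $V^{(m)}$, and that the relevant norm on $V^{(m)}$ is the multiplicative Hilbert cross-norm. Both points are standard for Hilbert-space tensor products, but they must be made explicit, since the bound \eqref{eq:Psiboundpm} rests entirely on the identity $\|p(z)^{(m)}\|_{V^{(m)}}=\|p(z)\|_V^m$; everything else is an immediate assembly of the two preceding theorems.
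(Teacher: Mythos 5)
Your proposal is correct and follows essentially the route the paper itself indicates: the paper derives this corollary in one line by combining the preceding holomorphy results with Theorem \ref{thm:PhiArbound}, and your argument simply makes explicit the two ingredients that derivation relies on, namely that the tensor power of a strongly holomorphic $V$-valued map is strongly holomorphic with $\|p(z)^{(m)}\|_{V^{(m)}}=\|p(z)\|_V^m$, and that the product of the bound \eqref{eq:zApriori} with the bound \eqref{eq:PhiArbound} yields \eqref{eq:Psiboundpm}. Your use of Theorem \ref{thm:Analyt} (rather than Lemma \ref{l:two}) for the holomorphy of $p(z)$ is if anything the more natural choice, since the corollary is stated on the domains $\cA_r$.
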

\section{Polynomial Chaos Approximations of the Posterior}
\label{sec:PCApprTh}
Building on the results of the previous section, we 
now proceed to approximate $\Theta(z),$
viewed as a holomorphic functional 
over $z \in \bbC^{\bbJ}$, by so-called 
{\em polynomial chaos representations}.
Exactly the same results on analyticity
and on $N$-term approximation of $\Psi(z)$
hold. We omit details for reasons of brevity
of exposition and confine ourselves
to establishing rates of convergence
of $N$-term truncated representations of the posterior
density $\Th$. 
The results in the present section are, in one sense,
{\em sparsity results} on the posterior density $\Th$.
On the other hand, 
such $N$-term truncated gpc representations of $\Th$ are, 
as we will show in the next section, computationally accessible
once sparse truncated adaptive forward solvers of the parametrized 
system of interest are available. Such solvers are
indeed available 
(see, e.g., \cite{BAS09,CCDS11,CS+CJG11} and the references therein), 
so that the abstract approximation results in the present section
have a substantive constructive aspect. 
Algorithms based on Smolyak-type quadratures in $U$ 
which are designed based on the present theoretical results 
will be developed and analyzed in \cite{AScSt}.
In this section we analyze the convergence rate
of  {\em $N$-term truncated Legendre gpc-approximations of 
$\Th$} and, 
with the aim of a {\em constructive $N$-term approximation}
of the posterior $\Th(y)$ in $U$ in Section \ref{sec:Expost} 
ahead, we analyze also $N$-term truncated 
{\em monomial gpc-approximations} of $\Th(y)$.

\subsection{gpc Representations of $\Th$}
%
With the index set $\IJ$ from the parametrization \eqref{assume1} 
of the input, we associate 
the countable index set 
\beq
\label{eq:defcF}
\cF=\{\nu \in \bbN_{0}^{\bbJ}:|\nu|_{1}<\infty\}
\eeq
of multiindices where $\bbN_{0}=\bbN \cup \{0\}$.
We remark that sequences $\nu\in\cF$ are finitely supported 
even for $\bbJ=\bbN$.
For $\nu\in\cF$, we denote by
$\bbI_{\nu} =\{ j\in \IN: \nu_j \ne 0\} \subset \IN$ 
the ``support'' of $\nu\in \cF$, i.e.
the finite set of indices of entries of $\nu\in\cF$ 
which are non-zero, and by 
$\aleph(\nu)  := \# \bbI_\nu < \infty$, $\nu\in\cF$
the ``support size'' of $\nu$, i.e. the
cardinality of $\bbI_\nu$.

For the deterministic approximation of the posterior density 
$\Theta(y)$ in \eqref{eq:PostDens} we shall use tensorized
polynomial bases similar to what is done in so-called
``polynomial chaos'' expansions of random fields.
We shall consider two particular polynomial bases, Legendre
and monomial bases.
\subsubsection{Legendre Expansions of $\Th$}
\label{sssec:LegExpTh}
Since we assumed that the prior measure
$\mu_0(dy)$ is built by tensorization of the uniform 
probability measures on $(-1,1)$, we build the bases by 
tensorization as follows:
let $L_{k}(z_j)$ denote the $k^{th}$ Legendre polynomial
of the variable $z_j \in \IC$, normalized such that 
\beq\label{eq:LegNorm}
\int_{-1}^1 (L_k(t))^2 \frac{dt}{2} = 1,\quad k = 0,1,2,...
\eeq
Note that $L_0 \equiv 1$. 
The Legendre polynomials $L_k$ in \eqref{eq:LegNorm}
are extended to tensorproduct polynomials on $U$ via
\beq\label{eq:MultiLeg}
L_{\nu}(z)=\prod_{j \in \IJ} L_{\nu_j}(z_j),
\quad
z\in\IC^\IJ,\; \nu\in \cF \;.
\eeq
The normalization \eqref{eq:LegNorm} implies
that the polynomials $L_\nu(z)$ in \eqref{eq:MultiLeg}
are well-defined for any $z\in \IC^\IJ$
since the finite support of each element of $\nu \in \cF$ 
implies that $L_{\nu}$ in \eqref{eq:MultiLeg} 
is the product of only finitely many nontrivial polynomials.
It moreover implies that the 
set of tensorized Legendre polynomials 
\beq \label{eq:poly}
\IL(U,\mu_0(dy)) 
:= 
\{ L_\nu: \nu \in \cF \} 
\eeq
forms a countable orthonormal basis in $L^2(U,\mu_0(dy))$.
This observation suggests, by virtue of
Lemma \ref{l:L2} below, the use of mean square
convergent gpc-expansions
to represent  $\Th$ and $\Psi$. 
Such expansions can also serve as a basis
for sampling of these quantities with
draws that are equidistributed with respect
to the prior $\mu_0$.
\begin{lemma} \label{l:L2} 
The density $\Th: U \to \IR$ is square 
integrable with respect to the prior 
$\mu_0(dy)$ over $U$, 
i.e. $\Th\in L^2\bigl(U,\mu_0(dy)\bigr)$.
Moreover, if the functional $\phi(\cdot):U\rightarrow S$ 
in \eqref{eq:psi} is bounded, then 
$$ \int_{U} \|\Psi(y)\|_{S}^2 \mu_0(dy)<\infty,$$
i.e. $\Psi\in L^2\bigl(U,\mu_0(dy);S\bigr).$ 
\end{lemma}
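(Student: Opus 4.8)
The plan is to prove square integrability by establishing a uniform pointwise bound on $\Th$ (and on $\|\Psi\|_S$) over $U$, and then integrating against the probability measure $\mu_0$, which is finite. The essential observation is that $\Th$ is already bounded, not merely square integrable, so the result follows from the finiteness of $\mu_0(U)=1$ together with the elementary inclusion $L^\infty(U,\mu_0)\subset L^2(U,\mu_0)$ for a finite measure space.

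\medskip
\noindent\textbf{Step 1: A uniform upper bound on $\Th$.} First I would recall from \eqref{eq:lsq} that
\[
\Phi(u;\delta)=\tfrac12|\delta-\cG(u)|_\Gamma^2\ge 0,
\]
so that $\Th(y)=\exp(-\Phi(u;\delta))\le 1$ for every $y\in U$. Hence $\|\Th\|_{L^\infty(U,\mu_0)}\le 1$. This is the only fact needed for the first assertion; the holomorphy bounds of Theorem \ref{thm:PhiArbound} are not required here, though they would also suffice.

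\medskip
\noindent\textbf{Step 2: Integrate to obtain $L^2$ membership.} Since $\mu_0$ is a probability measure on $U$, I would estimate
\[
\int_U |\Th(y)|^2\,\mu_0(dy)\le \int_U 1\,\mu_0(dy)=\mu_0(U)=1<\infty,
\]
which gives $\Th\in L^2(U,\mu_0(dy))$ with $\|\Th\|_{L^2(U,\mu_0)}\le 1$.

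\medskip
\noindent\textbf{Step 3: The functional $\Psi$.} For the second assertion I would combine the same bound on $\Th$ with the hypothesis that $\phi$ is bounded. Writing $\Psi(y)=\Th(y)\,\phi(u)$ from \eqref{eq:psi}, so that $\|\Psi(y)\|_S=\Th(y)\,\|\phi(u)\|_S$, and setting $M:=\sup_{u}\|\phi(u)\|_S<\infty$, I would bound
\[
\int_U \|\Psi(y)\|_S^2\,\mu_0(dy)\le M^2\int_U |\Th(y)|^2\,\mu_0(dy)\le M^2<\infty,
\]
which yields $\Psi\in L^2(U,\mu_0(dy);S)$. For the concrete choice $\phi(u)=G(u)^{(m)}$ of \eqref{eq:defmpointcorp} the boundedness of $\phi$ is itself guaranteed by the a priori estimate \eqref{cLM2}, which gives $\|G(u)\|_V\le\|f\|_{V^*}/\amin$ uniformly under ${\bf UEA}(\amin,\amax)$, so $M\le(\|f\|_{V^*}/\amin)^m$.

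\medskip
There is really no substantive obstacle: the argument is a direct consequence of the nonnegativity of the potential $\Phi$ and the finiteness of the prior. The only point requiring a little care is that the boundedness of $\phi$ is an explicit hypothesis in the second half of the statement, so one should either invoke it directly or, for the specific functionals of interest, justify it via \eqref{cLM2}. The passage from the pointwise $L^\infty$ bound to square integrability is immediate precisely because $\mu_0$ is a probability (hence finite) measure.
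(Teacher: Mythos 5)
Your proposal is correct and follows essentially the same route as the paper's own proof: nonnegativity of $\Phi$ gives $\Theta(y)\in[0,1]$, boundedness of $\phi$ gives a uniform bound on $\|\Psi(y)\|_S$, and both conclusions follow because $\mu_0$ is a probability measure. Your closing observation that \eqref{cLM2} supplies the boundedness of $\phi$ in the concrete case $\phi(u)=G(u)^{(m)}$ is also what the paper records, in the remark immediately following the lemma.
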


\begin{proof} 
Since $\Phi$ is positive it follows
that $\Theta(y) \in [0,1]$ for all $y \in U$
and the first result follows
because $\mu_0$ is a probability measure. 
Now define $K=\sup_{y\in U}|\phi(y)|.$ 
Then $\sup_{y\in U}\|\Psi(y)\|_{S} \le K$ and
the second result follows similarly, again
using that $\mu_0$ is a probability measure.
\end{proof}

\begin{remark}
It is a consequence of \eqref{cLM2} that 
in the case where $\phi(u)=G(u)=p \in V$ we have
$\|\Psi(y)\|_{V} \le \|f\|_{V^*}/\amin$ 
for all $y \in U$. 
Thus the second assertion of Lemma \ref{l:L2}
holds for calculation of the expectation of
the pressure under the posterior distribution on $u$.
Indeed the assertion holds for all moments of the
pressure, the concrete examples which we concentrate on here.
\qed
\end{remark}
Since  $\IL(U,\mu_0(dy))$ in \eqref{eq:poly}
is a countable orthonormal basis
of $L^2(U,\mu_0(dy))$, the density $\Th(y)$ 
of the posterior measure given data $\delta\in Y$,
and the posterior reweighted pressure $\Psi(y)$
can be represented in $L^2(U,\mu_0(dy))$ 
by (parametric and deterministic) 
generalized Legendre polynomial chaos expansions.
We start by considering the scalar valued function
$\Theta(y)$.
\begin{equation} \label{eq:exp}
\Theta(y)=\sum_{\nu \in \cF}\theta_{\nu}L_{\nu}(y)
\quad \mbox{in} \quad L^2(U,\rho(dy))
\end{equation}
where the gpc expansion coefficients $\theta_\nu$ 
are defined by
\beq\label{unu}
\theta_{\nu}
=
\int_{U} \Th(y)L_{\nu}(y)\mu_0(dy) \;, \quad \nu\in \cF \;.
\eeq
By Parseval's equation and the normalization \eqref{eq:LegNorm}, 
it follows immediately from \eqref{eq:exp} and Lemma \ref{l:L2}
with Parseval's equality
that the second moment of the posterior density with respect to
the prior
\beq\label{eq:Parsev}
\| \Theta \|_{L^2(U,\mu_0(dy))}^2
=
\sum_{\nu\in \cF} |\theta_\nu|^2
\eeq
is finite.
\subsubsection{Monomial Expansions of $\Th$}
\label{sssec:MonExpTh}
We next consider expansions of the posterior density 
$\Th$ with respect to monomials 
$$
y^\nu = \prod_{j\geq 1} y_j^{\nu_j},\quad y\in U,\quad \nu\in \cF\;.
$$
Once more, the infinite product is well-defined since,
for every $\nu\in \cF$, it contains only $\aleph(\nu)$ many nontrivial factors.
By Lemma \ref{l:two} and Theorem \ref{thm:PhiArbound}, 
the posterior density $\Th(y)$ admits an analytic continuation 
to the product of strips $\cS_\rho$ which contains, in particular,
the polydisc $\cU_\rho$. In $U$, $\Th(y)$ can therefore be represented
by a  monomial expansion with uniquely determined 
coefficients $\tau_\nu\in V$ which coincide,
by uniqueness of the analytic continuation,
with the Taylor coefficients of $\Th$ at $0\in U$:
\beq\label{eq:TayTh}
\forall y\in U:\quad
\Th(y) = \sum_{\nu\in \cF} \tau_\nu y^\nu \;,
\quad
\tau_\nu := \frac{1}{\nu!} \partial^\nu_y \Th(y) \mid_{y=0}
\;.
\eeq
\subsection{Best $N$-term Approximations of $\Th$}
\label{ssec:BestNTh}
In our deterministic parametric approach to
Bayesian estimation, evaluation of expectations 
under the posterior
requires evaluation of the integrals \eqref{eq:Z}
and \eqref{eq:intpsi}. 
Our strategy is
to approximate these integrals 
by truncating the spectral
respresentation \eqref{eq:exp}, as well
as a similar expression for $\Psi(y)$, to a 
finite number $N$ of significant terms,
and to estimate the error incurred by doing so.
It is instructive to compare with Monte Carlo
methods.  Under the conditions of Lemma \ref{l:L2},
posterior expectation of functions $\Psi$
have finite second moments
so that Monte Carlo methods exhibit
the convergence rate $N^{-1/2}$ 
in terms of the number $N$ of samples, with
similar extension to MCMC methods. 
Here, however, we will show that it is possible to
derive approximations which incur error decaying
more quickly that the square root of $N$, 
where $N$ is now the
number of significant terms retained in \eqref{eq:exp}.

By \eqref{eq:Parsev}, the coefficient sequence
$(\theta_\nu)_{\nu\in\cF}$ must necessarily
decay. If this decay is sufficiently strong,
possibly high convergence rates of 
$N$-term approximations of the integrals
\eqref{eq:Z}, \eqref{eq:intpsi} occur.
The following classical result from 
approximation theory \cite{dev98}
makes these heuristic 
considerations precise:
denote by $(\gamma_n)_{n\in \IN}$ a 
(generally not unique) decreasing rearrangement 
of the sequence $(|\theta_\nu|)_{\nu\in \cF}$.
Then,
for any summability exponents 
$0 < \sigma \leq q \leq \infty$ and 
for any $N\in \mathbb{N}$ holds
\beq
\left(\sum_{n>N} \gamma_n^q\right)^{\frac 1 q}
\leq
N^{-(\frac1 \sigma - \frac 1 q) }
\left(\sum_{n\geq 1} \gamma_n^\sigma \right)^{\frac 1 \sigma}
\;.
\label{stechkin}
\eeq
\subsubsection{$L^2(U;\mu_0)$ Approximation.}
\label{sssec:L2ApprTh}
Denote by $\Lambda_N \subset \cF$ a set of indices $\nu\in \cF$
corresponding to $N$ largest gpc coefficients $|\theta_\nu|$
in \eqref{eq:exp}, and denote by
\beq\label{eq:SLambN}
\Th_{\Lambda_N}(y) := \sum_{\nu\in\Lambda_N} \theta_\nu L_\nu(y)
\eeq
the Legendre expansion \eqref{eq:exp} truncated to this set of
indices.
Using \eqref{stechkin} with $q=2$, 
Paseval's equation \eqref{eq:Parsev}  
and $0 < \sigma \le 1$ we obtain
for all $N$
\beq\label{eq:ThNRate}
\| \Th(z) - \Th_{\Lambda_N}(z) \|_{L^2(U,\mu_0(dy))}
\leq 
N^{-s}\| ( \theta_\nu )\|_{\ell^\sigma(\cF)},\;\; s:=\frac 1 \sigma - \frac 1 2 \;.
\eeq
We infer from \eqref{eq:ThNRate} that
{\em 
a mean-square convergence rate $s>1/2$ 
of the approximate posterior density $\Theta_{\Lambda_N}$
can be achieved {\em provided that} 
$(\theta_\nu)\in \ell^\sigma(\cF)$ 
for some $0 < \sigma < 1$}. 
\subsubsection{$L^1(U;\mu_0)$ and pointwise Approximation of $\Th$}
\label{sssec:L1ApprTh}
The analyticity of $\Th(y)$ in $\cU_\rho$ implies
that $\Th(y)$ can be represented by the Taylor 
exansion \eqref{eq:TayTh}. 
This expansion is unconditionally summable in $U$
and, for any sequence $\{\Lambda_N\}_{N\in \IN}\subset\cF$
which exhausts $\cF$
\footnote{
We recall that a sequence 
$\{\Lambda_N\}_{N\in \IN}\subset\cF$
of index sets $\Lambda_N$ whose cardinality 
does not exceed $N$ exhausts $\cF$
if any finite $\Lambda\subset \cF$ is
contained in all $\Lambda_N$ for $N\geq N_0$ with $N_0$
sufficiently large.}, 
the corresponding sequence of 
$N$-term truncated partial Taylor sums 
\beq\label{eq:TLamN}
T_{\Lambda_N}(y) := \sum_{\nu\in \Lambda_N} \tau_\nu y^\nu
\eeq
converges pointwise in $U$ to $\Th$.
Since for $y\in U$ and $\nu\in \cF$ we have $|y^\nu|\leq 1$,
for any $\Lambda_N\subset \cF$ 
of cardinality not exceeding $N$ holds
\beq\label{eq:LooAppTh}
\sup_{y\in U}
\left| \Theta(y) - T_{\Lambda_N}(y) \right|
=
\sup_{y\in U}
\left|
\sum_{\nu\in \cF\backslash \Lambda_N}
\tau_\nu y^\nu
\right|
\leq
\sum_{\nu\in \cF\backslash \Lambda_N} 
|\tau_\nu|
\;.
\eeq
Similarly, we have 
$$
\left\| \Th - T_{\Lambda_N} \right\|_{L^1(U,\mu_0)}
=
\left\| 
\sum_{\nu\in\cF\backslash\Lambda_N} \tau_\nu y^\nu 
\right\|_{L^1(U,\mu_0)}
\leq
\sum_{\nu\in\cF\backslash\Lambda_N}
|\tau_\nu| 
\left\| y^\nu \right\|_{L^1(U,\mu_0)}\;.
$$
For $\nu\in \cF$, we calculate
$$
\begin{array}{rcl}
\left\| y^\nu \right\|_{L^1(U,\mu_0)}
&=& \ds
\int_{y\in U} |y^\nu| \mu_0(dy)
= 
\frac{1}{(\nu+\b1)!}
\end{array}
$$
so that we find
\beq\label{eq:L1AppTh}
\left\| \Th - T_{\Lambda_N} \right\|_{L^1(U,\mu_0)}
\leq
\sum_{\nu\in\cF\backslash\Lambda_N}
\frac{
|\tau_\nu| 
}{
(\nu+\b1)!}
\;.
\eeq

\subsubsection{Summary}

There are, hence, two main issues to be addressed to
employ the preceding approximations in practice: 
i) establishing the summability of the coefficient
   sequences in the series \eqref{eq:exp}, \eqref{eq:TayTh}; 
and
ii) finding algorithms which locate sets $\Lambda_N\subset\cF$ 
of cardinality not exceeding $N$ for which the truncated
partial sums preserve the optimal convergence rates
and, once these sets are localized,
to determine the $N$ ``active'' coefficients $\theta_\nu$
or $\tau_\nu$,
preferably in close to $O(N)$ operations.
In the remainder of this section, we address i) and consider ii) 
in the next section.

\subsection{Sparsity of the posterior density $\Theta$}
\label{ssec:SpTh}
The analysis in the previous section shows that
the convergence rate of the truncated gpc-type approximations
\eqref{eq:SLambN}, \eqref{eq:TLamN}  
on the parameter space $U$ is determined 
by the $\sigma$-summability of the
corresponding coefficient sequences
$(|\theta_\nu|)_{\nu\in \cF}$, $(|\tau_\nu|)_{\nu\in \cF}$\;.
We now show that summability (and, hence, sparsity) 
of Legendre and Taylor coefficient sequences 
in the expansions \eqref{eq:exp}, \eqref{eq:TayTh}
is determined by that of the sequence 
$(\|\psi_j\|_{L^\infty(D)})_{j\in\mathbb{N}}$ 
in the input's fluctuation expansion \eqref{assume1}. 
Throughout, Assumptions \ref{assump1} and \ref{assump2} 
will be required to hold.
We formalize the decay of the $\psi_j$ in \eqref{eq:par} by
\begin{ass}\label{summabilityofpsi}
There exists $0<\sigma<1$ such that for
the parametric representations \eqref{assume1},
\eqref{eq:par} it holds that
\beq\label{asspsi}
\sum_{j=1}^\infty \|\psi_j\|_{L^\infty(D)}^\sigma < \infty \;.
\eeq
\end{ass}
The strategy of establishing sparsity of the sequences 
$(|\theta_\nu|)_{\nu\in \cF}$, $(|\tau_\nu|)_{\nu\in \cF}$ is based
on estimating the sequences by Cauchy's integral formula 
applied to the analytic continuation of $\Theta$.
\subsubsection{Complex extension of the parametric problem}\label{estimateforunu}
To estimate $|\theta_\nu|$ in \eqref{eq:SLambN}, we shall use 
the holomorphy of solution to the 
(analytic continuation of the) parametric deterministic problem:
let $0<K<1$ be a constant such that
\beq\label{eq:defK}
K\sum_{j=1}^\infty\|\psi_j\|_{L^\infty(D)}<{a_{\min}\over 8}.
\eeq
Such a constant exists by Assumption \ref{summabilityofpsi}.  
For $K$ selected in this fashion, 
we next choose an integer $J_0$ such that
\[
\sum_{j> J_0}\|\psi_j\|_{L^\infty(D)}<{a_{\min}K\over 24(1+K)}.
\]
Let $E=\{1,2,\ldots,J_0\}$ and $F=\IN\setminus E$. 
We define
\[
|\nu_F|=\sum_{j> J_0}|\nu_j|.
\]
For each $\nu\in {\cF}$ we define a $\nu$-dependent
radius vector $\xr = (r_m)_{m\in \IJ}$ with 
$r_m > 0$ for all $m\in \IJ$ as follows:
\beq\label{eq:defrm}
r_m = K\ \mbox{when}\ m\le J_0\ \mbox{and}\ 
r_m = 1 + {a_{\min}\nu_m\over 4|\nu_F|\|\psi_m\|_{L^\infty(D)}}\ \mbox{when}\ m>J_0,
\eeq
where we make the convention that 
${|\nu_j|\over |\nu_F|}=0$ if $|\nu_F|=0$. 
We consider the open discs ${\cU}_m\subset \mathbb{C}$ 
defined by
\beq\label{eq:defUm}
[-1,1] \subset {\cU}_m :=\{z_m\in{\IC}: |z_m| < 1+r_m\} \subset\IC.
\eeq
We will extend the parametric deterministic problem \eqref{eq:sesqfwdpb}
to parameter vectors $z$ in the polydiscs
\beq\label{eq:defUm'}
{\cU}_{1+ \xr} := \bigotimes_{m\in \IJ}{\cU}_m \subset {\mathbb C}^{\mathbb J}.
\eeq
To do so,
we invoke the analytic continuation of the 
parametric, deterministic coefficient function $u(x,y)$ in 
(\ref{assume1}) to $z\in{\cU}$ which is for such $z$
{\em formally} given by
\[
u(x,z)=\bar a(x)+\sum_{m\in \IJ} \psi_m(x)z_m.
\]
We verify that
this expression is meaningful for $z\in{\cU}_{\xr}$:
we have, for almost every $x\in D$, 
\beas
|u(x,z)|&\le&\bar a(x)+\sum_{m\in \IJ}|\psi_m(x)|(1+r_m)
\\
&\le& \ds
{\rm ess}\sup_{x\in D} |\bar a(x)|
+
\sum_{m=1}^{J_0}\|\psi_m\|_{L^\infty(D)}(1+K)
\\
& & \ds
+
\sum_{m > J_0}
\Bigl(
2+{a_{\min}\nu_m\over 4|\nu_F|\|\psi_m\|_{L^\infty(D)}}
\Bigr)
\|\psi_m\|_{L^\infty(D)}
\\
&\le&\ds
\| \bar a \|_{L^\infty(D)} 
+ 
2\sum_{m=1}^\infty\|\psi_m\|_{L^\infty(D)}+{a_{\min}\over 4}
\;.
\eeas
\subsubsection{Estimates of the $\theta_\nu$}
\begin{proposition}\label{normofthnu}
There exists a constant $C>0$ such that, 
with the constant $K\in (0,1)$ in \eqref{eq:defK},
for every $\nu \in \cF$ the following estimate holds
\beq
|\theta_\nu| \le C\biggl(\prod_{m\in \bbI(\nu)}{2(1+K)\over K}\eta_m^{-\nu_m}\biggr),
\label{unux}
\eeq
where $\eta_m:=r_m+\sqrt{1+r_m^2}$ with $r_m$ 
as in (\ref{eq:defrm}).
\end{proposition}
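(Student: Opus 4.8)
The plan is to derive \eqref{unux} by feeding the uniform boundedness of the holomorphic extension of $\Theta$ on the $\nu$-dependent polydiscs $\cU_{1+\xr}$ into a one-dimensional, Cauchy-type estimate for Legendre coefficients, applied one coordinate at a time over the finite active set $\bbI(\nu)$.

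First I would show that $\Theta$ extends holomorphically to $\cU_{1+\xr}$ with $\sup_{z\in\cU_{1+\xr}}|\Theta(z)|\le M$ for a constant $M$ independent of $\nu$. The modulus bound on $|u(x,z)|$ established just above shows that $u(\cdot,z)$ stays bounded on $\cU_{1+\xr}$; to invoke Theorem \ref{thm:PhiArbound} I additionally need $\Re(u(x,z))\ge r$ for a fixed $r>0$. For $z\in\cU_{1+\xr}$ one has $\Re(u(x,z))\ge \bar a(x)-\sum_m|\psi_m(x)|(1+r_m)$, and splitting the sum at $J_0$, inserting the radii from \eqref{eq:defrm} and using $\sum_{m>J_0}\nu_m=|\nu_F|$ together with the calibrations of $K$ and $J_0$ fixed in \eqref{eq:defK}, one obtains a lower bound of order $\amin$, uniform in $x$ and in $\nu$; this is exactly the same bookkeeping that produced the modulus estimate. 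Hence $u(\cdot,z)\in\cA_r$, and the argument proving Theorem \ref{thm:PhiArbound} (via the a priori bound \eqref{eq:zApriori}) yields the uniform bound $M=\exp\big(r^{-2}\|f\|_{V^*}^2\sum_{k=1}^K\|o_k\|_{V^*}^2\big)$.

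Next I would pass from the polydisc to Bernstein ellipses. For each $m$ the disc $\cU_m=\{|z_m|<1+r_m\}$ contains the closed Bernstein ellipse $\cE_{\eta_m}$ with foci $\pm1$, semi-axes $\sqrt{1+r_m^2}$ and $r_m$, and Joukowski parameter $\eta_m=r_m+\sqrt{1+r_m^2}$; since its maximal modulus $\sqrt{1+r_m^2}$ is strictly below $1+r_m$, the ellipse sits compactly inside $\cU_m$, so $|\Theta|\le M$ on the polyellipse $\bigotimes_m\cE_{\eta_m}$. I would then write $\theta_\nu=\int_U\Theta(y)\prod_{m\in\bbI(\nu)}L_{\nu_m}(y_m)\,\mu_0(dy)$, the inactive factors being $L_0\equiv1$, and apply the classical univariate estimate: if $g$ is holomorphic on a neighbourhood of $\cE_\rho$ with $|g|\le M_1$ there, then its normalized Legendre coefficient obeys $\big|\int_{-1}^1 g\,L_k\,\frac{dt}{2}\big|\le M_1\,c(\rho)\,\rho^{-k}$ with $c(\rho)$ independent of $k$, the growth $\|L_k\|_{L^\infty}=\sqrt{2k+1}$ being absorbed by the $k^{-1/2}$ decay of the Legendre function of the second kind on $\cE_\rho$; here $c(\eta_m)\le\frac{2(1+r_m)}{r_m}$. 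Applying this successively in each active variable $z_m$, $m\in\bbI(\nu)$, while holding the others on their ellipses and using the uniform bound $M$, and then integrating out the inactive coordinates (each contributing $1$, since $\mu_0$ is a probability measure), gives $|\theta_\nu|\le M\prod_{m\in\bbI(\nu)}\frac{2(1+r_m)}{r_m}\,\eta_m^{-\nu_m}$.

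Finally I would bound the per-coordinate constant uniformly: for $m\le J_0$ one has $r_m=K$, so $\frac{2(1+r_m)}{r_m}=\frac{2(1+K)}{K}$, while for $m>J_0$ one has $r_m\ge1$, so $\frac{2(1+r_m)}{r_m}\le 4\le\frac{2(1+K)}{K}$ because $0<K<1$. Taking $C=M$ then yields \eqref{unux}. I expect the main obstacle to be the univariate Legendre estimate with a $k$-independent constant of the stated form and the rigorous coordinatewise iteration: one must justify the successive contour deformations onto the $\cE_{\eta_m}$ while controlling $\Theta$ uniformly in the frozen variables, which is precisely where the $\nu$-independence of $M$ (hence of $C$) enters. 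Verifying the uniform ellipticity lower bound across the $\nu$-dependent family $\cU_{1+\xr}$ is the other delicate point, but it reduces to the constant bookkeeping already set up by \eqref{eq:defK} and \eqref{eq:defrm}.
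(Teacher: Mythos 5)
Your proposal is correct and follows essentially the same route as the paper: a uniform bound on the analytic continuation of $\Theta$ over the $\nu$-dependent polydiscs $\cU_{1+\xr}$ (via Theorem \ref{thm:PhiArbound}), followed by the Bernstein-ellipse estimate for Legendre coefficients — which the paper implements as a product Cauchy integral over $\cE_S$ together with the decay of the Legendre functions of the second kind — and the same case split $m\le J_0$ versus $m>J_0$ to absorb the per-coordinate constant into $2(1+K)/K$. Your explicit verification of the lower bound $\Re(u(x,z))\gtrsim\amin$ on $\cU_{1+\xr}$ is a step the paper leaves implicit, but the bookkeeping with \eqref{eq:defK} and \eqref{eq:defrm} works out exactly as you describe.
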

{\it Proof}\ \ 
For $\nu\in \cF$, define $\theta_\nu$ by \eqref{unu}
let $S=\bbI(\nu)$ and define $\bar S=\IJ\setminus S$. 
For $S$ denote by ${\cU}_S=\otimes_{m\in S}{\cU}_m$ 
and 
${\cU}_{\bar S}=\otimes_{m\in \bar S}{\cU}_m$, 
and by $y_S=\{y_i: i\in S\}$ the extraction from $y$. 
Let ${\cE}_m$ be the ellipse in ${\cU}_m$ with foci at 
$\pm 1$ and semiaxis sum $\eta_m > 1$. 
Denote also ${\cE}_S = \prod_{m\in\bbI(\nu)}{\cE}_m$. 
We can then write (\ref{unu}) as
\[
\theta_\nu
=
{1\over (2\pi i)^{|\nu|_0}}
\int_{U}L_\nu(y)\oint_{{\cE}_S}{\Theta(z_S,y_{\bar S})\over (z_S-y_S)^{\bf 1}}
dz_Sd\rho(y).
\]
For each $m\in \IN$, let $\Gamma_m$ be a copy of $[-1,1]$ and $y_m\in \Gamma_m$. 
We denote by 
$U_S=\prod_{m\in S}\Gamma_m$ 
and 
$U_{\bar S}=\prod_{m\in\bar S}\Gamma_m$. 
We then have
\[
\theta_\nu
=
{1\over (2\pi i)^{|\nu|_0}}
\int_{U_{\bar S}}\oint_{{\cE}_{S}}
\Theta(z_S,y_{\bar S})\int_{U_S}{L_\nu(y)\over (z_S-y_S)^{\bf 1}}
d\rho_S(y_S)dz_Sd\rho_{\bar S}(y_{\bar S}).
\]
To proceed further, we recall the definitions of 
the Legendre functions of the second kind
\[
Q_n(z)=\int_{[-1,1]}{L_n(y)\over(z-y)}d\rho(y).
\]
Let $\nu_S$ be the restriction of $\nu$ to $S$. 
We define
\[
{\cQ}_{\nu_S}(z_S)=\prod_{m\in\bbI(\nu)}Q_{\nu_m}(z_m).
\]
Under the Joukovski transformation 
$z_m={1\over 2}(w_m+w_m^{-1})$, 
the Legendre polynomials of the second kind 
take the form
\[
Q_{\nu_m}({1\over 2}(w_m+w_m^{-1}))=\sum_{k=\nu_m+1}^\infty{q_{{\nu_m}k}\over w_m^k}
\]
with $|q_{{\nu_m}k}|\le \pi$. 
Therefore
\[
|{\cQ}_{\nu_S}(z_S)|
\le 
\prod_{m\in S}\sum_{k=\nu_m+1}^\infty{\pi\over\eta_m^k}
=
\prod_{m\in S}\pi{\eta_m^{-\nu_m-1}\over 1-\eta_m^{-1}}.
\]
We then have
\beas
|\theta_\nu|
&=&
\Bigg|{1\over (2\pi i)^{|\nu|_0}}
\int_{U_{\bar S}}\oint_{{\cE}_S}
\Theta(z_S,y_{\bar S}){\cQ}_{\nu_S}(z_S)dz_Sd\rho_{\bar S}(y_S)
\Bigg|
\\
&\le& \ds
{1\over (2\pi)^{|\nu|_0}}
\int_{U_{\bar S}}\oint_{{\cE}_S}
|\Theta(z_S,y_{\bar S})|{\cQ}_{\nu_S}(z_S)dz_Sd\rho_{\bar S}(y_S)
\\
&\le&\ds
{1\over (2\pi)^{|\nu|_0}}
\|\Theta(z)\|_{L^\infty({\cE}_S\times U_{\bar S})}
\max_{{\cE}_S}|{\cQ}_{\nu_S}|
\prod_{m\in S}{\rm Len}({\cE}_m)
\\
&\le&
{1\over (2\pi)^{|\nu|_0}}\|\Theta(z)\|_{L^\infty({\cE}_S\times{\cU}_{\bar S})}
\prod_{m\in S}\pi{\eta_m^{-\nu_m-1}\over 1-\eta^{-1}_m}{\rm Len}({\cE}_m)
\\
&\le&
C\prod_{m\in S}{2(1+K)\over K}\eta_m^{-\nu_m},
\eeas
as ${\rm Len}({\cE}_m)\le 4\eta_m$, 
$\eta_m\ge 1+K$ and as $|\Th(z)|$ is uniformly bounded
on ${\cE}_S\times{\cU}_{\bar S}$ by Theorem \ref{thm:PhiArbound}.
\hfill$\Box$
\subsubsection{Summability of the $\theta_\nu$}
\label{sssec:Sumthnu}
To show the ${\ell}^\sigma ({\cF})$ summability of $|\theta_\nu|$, 
we use the following result, which appears as Theorem 7.2 in \cite{CDS1}.
\begin{proposition}\label{psummability} 
For 
$0 < \sigma < 1$ and for 
any sequence $(b_\nu)_{\nu\in \cF}$,
$$
\ds{\Bigl({|\nu|!\over\nu!}b^\nu\Bigr)_{\nu\in{\cF}}\in {\ell}^\sigma({\cF})}
\Longleftrightarrow
\sum_{m\ge 1} |b_m| < 1
\quad\mbox{and}\quad
(b_m)_{m\in \IN} \in {\ell}^\sigma (\IN)
\;.
$$
\end{proposition}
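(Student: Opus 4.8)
The plan is to prove a two-sided equivalence relating $\ell^\sigma(\cF)$-summability of the tensorized sequence $\bigl(\tfrac{|\nu|!}{\nu!}b^\nu\bigr)_{\nu\in\cF}$ to the combination of the smallness condition $\sum_m|b_m|<1$ and the one-dimensional summability $(b_m)\in\ell^\sigma(\IN)$. Since the statement is quoted verbatim from \cite{CDS1}, Theorem~7.2, I would reproduce the multinomial-coefficient argument from that reference. The central combinatorial identity is the multinomial theorem in the form
\[
\sum_{\nu\in\cF}\frac{|\nu|!}{\nu!}\,b^\nu = \sum_{n\geq 0}\Bigl(\sum_{m\geq 1}|b_m|\Bigr)^n = \frac{1}{1-\sum_{m\geq 1}|b_m|},
\]
valid precisely when $\sum_m|b_m|<1$, which already signals why that smallness hypothesis is exactly the right one: it is the radius-of-convergence threshold for the underlying geometric series.

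For the forward implication ($\Leftarrow$), I would first observe that $(b_m)\in\ell^\sigma(\IN)$ with $\sigma<1$ forces $\sum_m|b_m|^\sigma<\infty$, and the task is to bound $\sum_{\nu\in\cF}\bigl(\tfrac{|\nu|!}{\nu!}\bigr)^\sigma|b^\nu|^\sigma$. The key inequality is the elementary bound $\bigl(\tfrac{|\nu|!}{\nu!}\bigr)^\sigma \leq \tfrac{|\nu|!}{\nu!}$ for $\sigma\leq 1$ (since $\tfrac{|\nu|!}{\nu!}\geq 1$), which reduces the $\sigma$-power sum to the same multinomial structure applied to the sequence $(|b_m|^\sigma)_m$. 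One then needs $\sum_m|b_m|^\sigma$ to be finite (guaranteed by $\ell^\sigma$) but \emph{not necessarily} less than $1$, so a more careful estimate is required: I would split the index set into a finite ``large-coefficient'' part $E$ and a tail $F$ on which $\sum_{m\in F}|b_m|<1$, then use the smallness on the tail to sum the geometric-type contribution and absorb the finitely many large coordinates into a constant. This splitting, combined with Stirling-type control of $\tfrac{|\nu|!}{\nu!}$, yields the $\ell^\sigma(\cF)$ bound.

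For the reverse implication ($\Rightarrow$), I would argue contrapositively on each condition. If $\sum_m|b_m|\geq 1$, then the terms $\tfrac{|\nu|!}{\nu!}b^\nu$ cannot even be summable in $\ell^1$ (the geometric series above diverges), hence a fortiori not in $\ell^\sigma$ for $\sigma<1$; one extracts this by testing against the multi-indices supported on a single coordinate or by comparison with the full multinomial sum. If instead $(b_m)\notin\ell^\sigma(\IN)$, then restricting to the ``first-order'' multi-indices $\nu=e_m$ (for which $\tfrac{|\nu|!}{\nu!}=1$ and $b^\nu=b_m$) already embeds the sequence $(b_m)$ isometrically into the $\ell^\sigma(\cF)$ norm, so failure of $\ell^\sigma(\IN)$-summability propagates directly.

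The main obstacle will be the forward direction, specifically reconciling the multinomial summation (which natively wants $\sum_m|b_m|<1$) with the weaker hypothesis $\sum_m|b_m|^\sigma<\infty$, where the latter sum may be large. The decisive technical device is the $E$/$F$ splitting together with the sub-multiplicativity bound $\bigl(\tfrac{|\nu|!}{\nu!}\bigr)^\sigma\leq \tfrac{|\nu_E|!}{\nu_E!}\cdot\tfrac{|\nu_F|!}{\nu_F!}$ (a consequence of the super-multiplicativity of multinomial coefficients under index-set partition), which lets one factor the sum over $\cF$ into a finite-dimensional factor over $E$ and a geometrically controlled factor over $F$. I would lean on the detailed computation in \cite{CDS1} for the precise constants rather than reproduce them here, since the argument is standard once this factorization is in place.
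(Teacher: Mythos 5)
The paper itself offers no proof of Proposition \ref{psummability}: it is imported verbatim as Theorem 7.2 of \cite{CDS1}, so your decision to ultimately defer to that reference matches what the paper does. Your reverse implication is sound (testing on the unit multi-indices $\nu=e_m$, for which $\frac{|\nu|!}{\nu!}b^\nu=b_m$, recovers $(b_m)\in\ell^\sigma(\IN)$; and the multinomial identity $\sum_{|\nu|=n}\frac{|\nu|!}{\nu!}|b|^\nu=\bigl(\sum_m|b_m|\bigr)^n$ together with the embedding $\ell^\sigma(\cF)\subset\ell^1(\cF)$ for $\sigma<1$ forces $\sum_m|b_m|<1$). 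You also correctly isolate the real difficulty in the forward direction: the hypothesis gives $\sum_m|b_m|^\sigma<\infty$ but not $<1$, so the naive reduction via $\bigl(\frac{|\nu|!}{\nu!}\bigr)^\sigma\le\frac{|\nu|!}{\nu!}$ only closes the argument for small sequences, and an $E$/$F$ splitting is indeed the standard remedy.

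However, the inequality you name as the decisive technical device, $\bigl(\frac{|\nu|!}{\nu!}\bigr)^\sigma\le\frac{|\nu_E|!}{\nu_E!}\cdot\frac{|\nu_F|!}{\nu_F!}$, is false, and it does not follow from super-multiplicativity --- that property gives precisely the opposite bound. The exact identity is $\frac{|\nu|!}{\nu!}=\binom{|\nu|}{|\nu_E|}\frac{|\nu_E|!}{\nu_E!}\frac{|\nu_F|!}{\nu_F!}$, so the product of the two partial multinomial coefficients is a \emph{lower} bound for $\frac{|\nu|!}{\nu!}$, and the binomial factor can be exponentially large: for $\nu$ with a single entry $n$ supported in $E$ and a single entry $n$ supported in $F$ one has $\frac{|\nu|!}{\nu!}=\binom{2n}{n}\sim 4^n/\sqrt{\pi n}$ while $\frac{|\nu_E|!}{\nu_E!}\frac{|\nu_F|!}{\nu_F!}=1$, so $\bigl(\frac{|\nu|!}{\nu!}\bigr)^\sigma\to\infty$ exceeds your claimed majorant. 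Consequently the factorization of the sum over $\cF$ into an $E$-factor times an $F$-factor does not go through as stated, and the key step of your forward implication fails. The repair is to retain the binomial factor and control it by $\binom{|\nu|}{|\nu_E|}\le(1+\eta)^{|\nu_F|}(1+\eta^{-1})^{|\nu_E|}$ for a free parameter $\eta>0$, absorbing $(1+\eta)$ into the tail coordinates (where the relevant sum can be made as small as needed by enlarging $E$, using $(b_m)\in\ell^\sigma(\IN)$) and $(1+\eta^{-1})$ into the finitely many coordinates of $E$, which then require a separate, finite-dimensional argument; this is essentially how the proof in \cite{CDS1} is organized. As written, your sketch has a genuine gap at exactly the point you identify as decisive.
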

This result implies the $\sigma$-summability of the sequence
$(\theta_\nu)$ of Legendre coefficients.
\begin{proposition}\label{lp}
Under Assumptions \ref{assump1}, \ref{assump2},
for $0 < \sigma < 1$ as in Assumption \ref{summabilityofpsi},
$
\sum_{\nu\in{\cF}} |\theta_\nu|^\sigma
$
is finite.
\end{proposition}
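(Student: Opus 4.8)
The plan is to feed the pointwise bound on the Legendre coefficients furnished by Proposition \ref{normofthnu} into the multinomial summability criterion of Proposition \ref{psummability}. The difficulty is that the radii $r_m$, and hence $\eta_m = r_m + \sqrt{1+r_m^2}$, are themselves $\nu$-dependent through \eqref{eq:defrm}, so the product $\prod_{m\in\bbI(\nu)}\eta_m^{-\nu_m}$ in \eqref{unux} does not immediately have the separable form $(|\nu|!/\nu!)\,b^\nu$ required by Proposition \ref{psummability}. The whole proof amounts to reorganizing this product into that form, and this is where I expect the technical effort to concentrate.

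First I would split each multiindex according to the decomposition $\IN = E \cup F$ with $E=\{1,\dots,J_0\}$ and $F=\IN\setminus E$ fixed before \eqref{eq:defrm}, writing $\nu=(\nu_E,\nu_F)$ and treating the two blocks separately. On $E$ the radii are constant, $r_m=K$, so $\eta_m = K+\sqrt{1+K^2}=:\eta_E>1$ is a single fixed number and the corresponding factor in \eqref{unux} is $\bigl(2(1+K)/K\bigr)\eta_E^{-\nu_m}$ whenever $\nu_m>0$. Since there are only $J_0$ coordinates in $E$, each contributing a geometrically decaying factor, the $\ell^\sigma$-sum over $\nu_E\in\bbN_0^{J_0}$ factorizes into $J_0$ convergent geometric series and yields a finite constant; this block is routine.

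The substance is the tail block $F$. Here I would use $\eta_m\ge 2r_m$ (from $\sqrt{1+r_m^2}\ge r_m$) together with the lower bound $r_m\ge a_{\min}\nu_m/(4|\nu_F|\,\|\psi_m\|_{L^\infty(D)})$ read off from \eqref{eq:defrm}, giving
$$
\eta_m^{-\nu_m}\le\Bigl(\frac{2|\nu_F|\,\|\psi_m\|_{L^\infty(D)}}{a_{\min}\nu_m}\Bigr)^{\nu_m}.
$$
Taking the product over $m>J_0$ collects a factor $(2|\nu_F|/a_{\min})^{|\nu_F|}\prod_{m>J_0}\nu_m^{-\nu_m}$ times $\prod_{m>J_0}\|\psi_m\|_{L^\infty(D)}^{\nu_m}$. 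The key algebraic step is then the Stirling-type pair $\nu_m^{\nu_m}\ge\nu_m!$ and $|\nu_F|^{|\nu_F|}\le e^{|\nu_F|}|\nu_F|!$, which converts $|\nu_F|^{|\nu_F|}\prod_m\nu_m^{-\nu_m}$ into $e^{|\nu_F|}\,|\nu_F|!/\nu_F!$. Absorbing the remaining constants $2(1+K)/K$, $2/a_{\min}$ and $e$ into the sequence (and using $\#(\bbI(\nu)\cap F)\le|\nu_F|$), I would arrive at
$$
\prod_{m\in\bbI(\nu)\cap F}\frac{2(1+K)}{K}\,\eta_m^{-\nu_m}\le\frac{|\nu_F|!}{\nu_F!}\prod_{m>J_0}b_m^{\nu_m},\qquad b_m:=\frac{4e(1+K)}{K\,a_{\min}}\,\|\psi_m\|_{L^\infty(D)}.
$$

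It remains to verify the two hypotheses of Proposition \ref{psummability} for $(b_m)_{m>J_0}$. Summability $(b_m)\in\ell^\sigma$ is immediate from Assumption \ref{summabilityofpsi}, since $b_m$ is a fixed multiple of $\|\psi_m\|_{L^\infty(D)}$. The subcriticality $\sum_{m>J_0}b_m<1$ is exactly what the choice of $J_0$ buys us: the bound $\sum_{m>J_0}\|\psi_m\|_{L^\infty(D)}<a_{\min}K/\bigl(24(1+K)\bigr)$ gives $\sum_{m>J_0}b_m<e/6<1$, so the constant $24$ is tuned precisely to dominate the factor $4e(1+K)/K$ produced above; matching these constants is the one place where I would check the arithmetic carefully, and I read the otherwise mysterious $24$ as confirmation that this is the intended route. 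Proposition \ref{psummability} then yields $\bigl(\tfrac{|\nu_F|!}{\nu_F!}b_F^{\nu_F}\bigr)_{\nu_F}\in\ell^\sigma$, and since the bound of Proposition \ref{normofthnu} factorizes over the independent blocks $\nu_E$ and $\nu_F$, multiplying the finite $E$-sum by the finite $F$-sum gives $\sum_{\nu\in\cF}|\theta_\nu|^\sigma<\infty$.
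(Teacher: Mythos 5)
Your proposal is correct and follows essentially the same route as the paper's proof: the same $E$/$F$ splitting fixed by the choice of $J_0$, geometric series on the finitely many $E$-coordinates, conversion of the $\nu$-dependent radii on $F$ into the separable form $\frac{|\nu_F|!}{\nu_F!}b^{\nu_F}$ via the Stirling pair, and an appeal to Proposition \ref{psummability} with the constant $24$ ensuring $\sum_m b_m<1$. The only differences are cosmetic: you use $\eta_m\ge 2r_m$ where the paper uses $\eta_m\ge 1+r_m$ (so your $b_m$ carries a $4e$ where the paper's carries an $8e$), and your Stirling manipulation is slightly cleaner than the paper's $\max\{1,e\sqrt{\nu_m}\}$ version, but both land in the same place.
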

{\it Proof}\ \ 
We have from Proposition \ref{normofthnu}
that
\beas
|\theta_\nu| & \le & C\prod_{m\in S}{2(1+K)\over K}(1+r_m)^{-\nu_m}
\\
&\le&
C\Bigl(\prod_{m\in E,\nu_m\ne 0}{2(1+K)\over K}\eta^{\nu_m}\Bigr)
\Bigl(\prod_{m\in F,\nu_m\ne 0}{2(1+K)\over K}
\Bigl({4|\nu_F|\|\psi_m\|_{L^\infty(D)}\over a_{\min}\nu_m}\Bigr)^{\nu_m}\Bigr)
\eeas
where $\eta=1/(1+K)<1$\;. 
Let ${\cF}_E = \{\nu\in{\cF}:\quad \bbI(\nu)\subset E\}$ 
and 
${\cF}_F={\cF}\setminus E$. 
From this, we have
\[
\sum_{\nu\in {\cF}} |\theta_\nu|^\sigma \le C A_E A_F
\]
where
\[
A_E = \sum_{\nu\in{\cF}_E} \prod_{m\in E,\nu_m\ne 0}
\Bigl({2(1+K)\over K}\Bigr)^\sigma \eta^{\sigma\nu_m},
\]
and
\[
A_F=\sum_{\nu\in{\cF}_F} 
\prod_{m\in F,\nu_m\ne 0}
\Bigl({2(1+K)\over K}\Bigr)^\sigma
\Bigl({4|\nu|\|\psi_m\|_{L^\infty(D)}\over a_{\min}\nu_m}\Bigr)^{\sigma \nu_m}.
\]
We estimate $A_E$ and $A_F$: 
for $A_E$, we have
\[
A_E=\biggl(1+\Bigl({2(1+K)\over K}\Bigr)^\sigma \sum_{m\ge 1}\eta^{pm}\biggr)^{J_0},
\]
which is finite due to $\eta<1$. 
For $A_F$, we note that for $\nu_m\ne 0$,
\[
{ 2(1+K)\over K} \le \Bigl({2(1+K)\over K}\Bigr)^{\nu_m}.
\]
Therefore
\[
A_F
\le 
\sum_{\nu\in{\cF}_F}
\prod_{m\in F}\Bigl({|\nu|d_m\over \nu_m}\Bigr)^{\sigma \nu_m},
\]
where
\[
d_m={8(1+K)\|\psi_m\|_{L^\infty(D)}\over Ka_{\min}} \;.
\]
With the convention that $0^0=1$ we obtain 
from the Stirling estimate
\[
{n!e^n\over e\sqrt{n}}\le n^n\le {n!e^n\over\sqrt{2\pi n}}
\]
that $ |\nu|^{|\nu|}\le |\nu|!e^{|\nu|}$.
Inserting this in the above bound for $A_F$, we obtain
\[
\prod_{m\in F}\nu_m^{\nu_m}
\ge 
{\nu!e^{|\nu|}\over\prod_{m\in F}\max\{1,e\sqrt{\nu_m}\}} \; .
\]
Hence
\[
A_F
\le 
\sum_{\nu\in{\cF}_F}
\Bigl({|\nu|!\over\nu!}d^{\nu}\Bigr)^\sigma
\Bigl(\prod_{m\in F}\max\{1,e\sqrt{\nu_m}\}\bigr)^\sigma
\le 
\sum_{\nu\in{\cF}_F}
\Bigl({|\nu|!\over\nu!}{\bar d}^{\nu}\Bigr)^\sigma
\;,
\]
where $ {\bar d}_m=ed_m $ and where we used the estimate 
$e\sqrt{n}\le e^n$.  
From this, we have
\[
\sum_{m\ge 1}\bar d_m
\le 
\sum_{m\in F}{24(1+K)\|\psi_m\|_{L^\infty(D)}\over Ka_{\min}}
\le 1.
\]
Since also
\[
\|\bar d\|_{l^\sigma(\IN)}<\infty
\]
we obtain with Proposition \ref{psummability}
the conclusion.\hfill$\Box$

We now show $\sigma$-summability of the Taylor coefficients
$\tau_\nu$ in \eqref{eq:TayTh}. To this end, we proceed as in 
the Legendre case: first we establish sharp bounds on the
$\tau_\nu$ by complex variable methods, and then show 
$\sigma$-summability of $(\tau_\nu)_{\nu\in \cF}$ 
by a sequence factorization argument.
\subsubsection{Bounds on the Taylor coefficients $\tau_\nu$}
\label{sssec:Boundtaunu}
\begin{lemma}\label{lem:taunubound}
Assume ${\bf UEAC}(\amin,\amax)$ and that 
$\rho = (\rho_j)_{j\geq 1}$ is an $r$-admissible
sequence of disc radii for some $0<r<\amin$.
Then the Taylor coefficients $\tau_\nu$ of 
the parametric posterior density \eqref{eq:TayTh}
satisfy 
\beq\label{eq:taunubound}
\forall \nu \in \cF:
\quad
| \tau_\nu | 
\leq 
\exp
\left(
\frac{\| f \|_{V^*}^2}{r^2} 
\sum_{k=1}^K \| o_k \|_{V*}^2
\right)
\prod_{j\geq 1} \rho_j^{-\nu_j}
\;.
\eeq
\end{lemma}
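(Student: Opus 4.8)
The plan is to bound the Taylor coefficients $\tau_\nu = \frac{1}{\nu!}\partial^\nu_y\Theta(y)\big|_{y=0}$ by means of the multivariate Cauchy integral formula applied to the analytic continuation of $\Theta$. The crucial input is Theorem \ref{thm:PhiArbound}, which gives the uniform bound $\sup_{z\in\cA_r}|\Theta(z)| \leq \exp\!\big(\frac{\|f\|_{V^*}^2}{r^2}\sum_{k=1}^K\|o_k\|_{V^*}^2\big)$, and Remark \ref{remk:rho>1} together with the discussion of $r$-admissible sequences, which guarantees that for an $r$-admissible $\rho$ the polydisc $\cU_\rho = \bigotimes_{j\in\IJ}\{|z_j|\leq\rho_j\}$ is contained in the domain of holomorphy $\cA_r$.

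First I would recall that, since $\Theta$ is holomorphic on $\cA_r \supset \cU_\rho$, the Cauchy integral formula in the finitely many variables $j\in\bbI_\nu = \{j:\nu_j\neq 0\}$ applies. Because $\nu$ is finitely supported, the partial derivative $\partial^\nu_y\Theta$ only involves those variables, so one may integrate over the distinguished boundary torus $\bigotimes_{j\in\bbI_\nu}\{|z_j|=\rho_j\}$, holding the remaining coordinates at zero. This yields the representation
\beq
\tau_\nu
=
\frac{1}{(2\pi i)^{\aleph(\nu)}}
\oint \cdots \oint
\frac{\Theta(z)}{\prod_{j\in\bbI_\nu} z_j^{\nu_j+1}} \, \prod_{j\in\bbI_\nu} dz_j
\;,
\eeq
where the point $z$ has $z_j$ on the circle $|z_j|=\rho_j$ for $j\in\bbI_\nu$ and $z_j=0$ otherwise. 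Estimating the integrand by $\sup_{z\in\cA_r}|\Theta(z)|$ and the modulus $|z_j^{\nu_j+1}| = \rho_j^{\nu_j+1}$ on the contour, while the length of each circle contributes $2\pi\rho_j$, the factors of $\rho_j$ combine to leave exactly $\prod_{j\in\bbI_\nu}\rho_j^{-\nu_j}$. Since $\rho_j^{-\nu_j}=1$ whenever $\nu_j=0$, this equals $\prod_{j\geq 1}\rho_j^{-\nu_j}$, giving precisely the claimed bound \eqref{eq:taunubound}.

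The main technical point, and the step I would treat most carefully, is the justification that the Cauchy formula is legitimately applied one variable at a time across the infinitely many coordinates. This rests on the finite support of $\nu$ and on the strong holomorphy of $\Theta$ as established in Lemma \ref{l:two} (via Hartogs' theorem and separability of the relevant space), so that $\Theta$ is genuinely holomorphic in each active variable on a disc of radius $\rho_j>\rho_j^{\text{(boundary)}}$ with the other variables frozen; the inactive variables may be fixed at $0\in\overline{U}\subset\cU\subset\cA_r$ without leaving the domain of holomorphy. Once this is in place, the estimate is essentially the standard Cauchy coefficient bound, and the only subtlety is bookkeeping: confirming that the $r$-admissibility of $\rho$ keeps the entire integration torus inside $\cA_r$, so that Theorem \ref{thm:PhiArbound} may be invoked uniformly, and that the product over inactive indices contributes trivially.
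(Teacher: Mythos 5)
Your proposal is correct and follows essentially the same route as the paper: reduce to the finitely many active variables (the paper freezes all coordinates $j>J=\max\bbI_\nu$ at zero and works with $\Theta_{[J]}$), apply the iterated Cauchy integral formula on circles of radii $\rho_j$ inside the polydisc $\cU_\rho\subset\cA_r$, and invoke the uniform bound of Theorem \ref{thm:PhiArbound}. The only cosmetic difference is that the paper makes the ``open neighbourhood of the closed polydisc'' point quantitative by passing to slightly enlarged, $r/2$-admissible radii $\tilde\rho_j=\rho_j+\eps$, which is exactly the technical care you flag informally.
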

{\it Proof}\ \ 
For $\nu=(\nu_j)_{j\geq 1} \in\cF$ holds
$J=\max\{ j\in\mathbb{N}: \nu_j\neq 0\} < \infty$.
For this $J$, define 
$\Theta_{[J]}(z^J) := \Theta(z_1,z_2,...,z_J,0,...)$,
i.e. $\Theta_{[J]}(z^J)$ denotes the 
function of $z^J \in \mathbb{C}^J$ obtained by 
setting in the posterior density $\Theta(z)$ 
all coordinates $z_j$ with $j > J$ equal to zero. 
Then 
$$
\partial_{z}^\nu \Theta(z)|_{z=0}
=
\frac{ \partial^{|\nu|} \Theta_{[J]} }
     { \partial z_1^{\nu_1} ... \partial z_J^{\nu_J} }
(0,...,0)
\;.
$$
Since the sequence $\rho$ is $r$-admissible it follows 
with \eqref{eq:PhiArbound} that
\beq
\sup_{(z_1,...,z_J)\in \cU_{\rho,J}} |\Theta_{[J]} (z_1,\ldots,z_J)| 
\leq 
\exp
\left(
\frac{\| f \|_{V^*}^2}{r^2} \sum_{k=1}^K \| o_k \|_{V*}^2
\right)
\;.
\label{ThetaJbound}
\eeq
for all $(z_1,\ldots,z_J)$ in the polydisc
$\cU_{\rho,J}:=\otimes_{1\leq j \leq J}\{{z_j\in\mathbb{C}}: |z_j|\leq \rho_j\}
 \subset \mathbb{C}^J$.
We now prove \eqref{eq:taunubound} by Cauchy's integral formula.
To this end, we define $\tilde{\rho}$ by
$$
\tilde{\rho}_j := \rho_j + \eps  \mbox{  if  } j\leq J,\;\; \tilde{\rho}_j = \rho_j  
\mbox{  if  } 
j> J,\;\;\; \eps := \frac {r}{2\|\sum_{j\leq J}|\psi_j|\|_{L^\infty(D)}}.
$$
Then the sequence $\tilde{\rho}$ is 
$r/2$-admissible and therefore $\cU_{\tilde{\rho}} \subset \cA_{r/2}$.
This implies that for each $z\in \cU_{\tilde{\rho}}$, 
$u$ is holomorphic in each variable $z_j$.

It follows that $u_J$ is holomorphic in each variable $z_1,\ldots,z_J$ 
on the polydisc $\otimes_{1\leq j\leq J}\{ |z_j| < \tilde{\rho}_j\}$
which is an open neighbourhood of $\cU_{\rho,J}$ in $\mathbb{C}^J$.

We may thus apply the Cauchy formula 
(e.g. Theorem 2.1.2 of \cite{HoermandComplex}) in each variable $z_j$:
$$
u_J(z_1,\ldots,z_J)
=
(2\pi i)^{-J}
\int_{|\tilde{z}_1|=\tilde{\rho}_1} \ldots \int_{|\tilde{z}_J| = \tilde{\rho}_J} 
\frac {u_J(\tilde{z}_1,\ldots, \tilde{z}_J)}{(z_1 - \tilde{z}_1)\ldots(z_J - \tilde{z}_J)} 
d\tilde{z}_1\ldots d\tilde{z}_J\;.
$$
We infer
$$
\frac {\partial^{|\nu|}}{\partial z_1^{\nu_1}\ldots \partial z_J^{\nu_J}} u_J(0,\ldots,0)
=
\nu! (2\pi i)^{-J}
\int_{|\tilde{z}_1|=\tilde{\rho}_1} \ldots \int_{|\tilde{z}_J| = \tilde{\rho}_J} 
\frac{u_J(\tilde{z}_1,\ldots, \tilde{z}_J)}{\tilde{z}_1^{\nu_1}\ldots \tilde{z}_J^{\nu_J}}
d\tilde{z}_1\ldots d\tilde{z}_J\;.
$$
Bounding the integrand on 
$\{|\tilde{z}_1| = \tilde{\rho}_1\} \times \ldots \times \{|\tilde{z}_J| = \tilde{\rho}_J\} \subset \cA_r$
with \eqref{eq:PhiArbound} implies \eqref{eq:taunubound}.
\hfill $\Box$
\subsubsection{$\sigma$-summability of the $\tau_\nu$}
\label{sssec:Sumtaunu}
Proceeding in a similar fashion as in 
Section 3 of \cite{CDS2}, we can prove the 
$\sigma$-summability of the Taylor coefficients $\tau_\nu$.
\begin{proposition}\label{prop:tausum}
Under Assumptions \ref{assump1}, \ref{assump2} and \ref{summabilityofpsi},
$(\| \tau_\nu \|_V) \in \ell^\sigma(\cF)$ 
for $0 < \sigma < 1$ as in Assumption \ref{summabilityofpsi}.
\end{proposition}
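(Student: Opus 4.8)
The plan is to combine the uniform pointwise bound on the Taylor coefficients furnished by Lemma \ref{lem:taunubound} with a $\nu$-dependent, optimized choice of $r$-admissible radius sequence, and then to reduce the resulting sum to a form covered by Proposition \ref{psummability}. The argument runs in close parallel to the Legendre case treated in Propositions \ref{normofthnu} and \ref{lp}, the essential difference being that Lemma \ref{lem:taunubound} already delivers the clean factor $\prod_j \rho_j^{-\nu_j}$, so no Legendre-function-of-the-second-kind estimates are needed here.

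First I would reuse the constant $K \in (0,1)$ of \eqref{eq:defK} and the splitting $\IN = E \cup F$ with $E = \{1,\dots,J_0\}$ from Section \ref{estimateforunu}, where $J_0$ is chosen so that the tail $\sum_{m>J_0}\|\psi_m\|_{L^\infty(D)}$ is as small as required. For each fixed $\nu \in \cF$ I would then build a radius sequence $\rho = \rho(\nu)$ of the same shape as \eqref{eq:defrm}: a fixed value bounded away from $1$ on the finite block $E$, and $\rho_m = 1 + \frac{a_{\min}\nu_m}{4|\nu_F|\|\psi_m\|_{L^\infty(D)}}$ on $F$ (with the convention $\nu_m/|\nu_F| = 0$ when $|\nu_F| = 0$). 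Admissibility is the point to check carefully: evaluating $\sum_m \rho_m |\psi_m(x)|$ and using $\sum_{m\in F}\nu_m/|\nu_F| = 1$ together with Assumption \ref{assump2} and the tail smallness of $J_0$ shows that \eqref{deltaadmissible} holds, uniformly in $x \in D$, for a suitable $0<r<\amin$ independent of $\nu$. This uniformity is exactly what lets the prefactor in Lemma \ref{lem:taunubound} be taken as a single constant $C$ for all $\nu$.

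With this choice, Lemma \ref{lem:taunubound} gives $|\tau_\nu| \le C \prod_{m\ge 1}\rho_m^{-\nu_m}$. Raising to the power $\sigma$ and splitting the product over $E$ and $F$, the $E$-factors sum to a finite quantity (a geometric series, since the radii on $E$ exceed $1$, raised to the power $J_0$), while on $F$ the bound $\rho_m \ge \frac{a_{\min}\nu_m}{4|\nu_F|\|\psi_m\|_{L^\infty(D)}}$ yields $\rho_m^{-\nu_m} \le \bigl(\frac{4|\nu_F|\|\psi_m\|_{L^\infty(D)}}{a_{\min}\nu_m}\bigr)^{\nu_m}$. Using $|\nu_F|\le |\nu|$ this produces a factor of the form $\prod_{m\in F}\bigl(\frac{|\nu| d_m}{\nu_m}\bigr)^{\nu_m}$ with $d_m$ a fixed multiple of $\|\psi_m\|_{L^\infty(D)}$, exactly as in the proof of Proposition \ref{lp}. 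The Stirling bound $|\nu|^{|\nu|}\le |\nu|!\,e^{|\nu|}$, followed by $e\sqrt{n}\le e^n$, converts this into $\bigl(\frac{|\nu|!}{\nu!}\bar d^{\,\nu}\bigr)^\sigma$ after absorbing the resulting $\prod_m\max\{1,e\sqrt{\nu_m}\}$ into $\bar d_m := e d_m$.

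It then remains to verify the two hypotheses of Proposition \ref{psummability} for $\bar d = (\bar d_m)_{m\ge 1}$: that $\sum_m \bar d_m < 1$ and that $\bar d \in \ell^\sigma(\IN)$. The former is secured by the smallness of $\sum_{m>J_0}\|\psi_m\|_{L^\infty(D)}$ built into the choice of $J_0$, and the latter is immediate from Assumption \ref{summabilityofpsi} since each $\bar d_m$ is a fixed multiple of $\|\psi_m\|_{L^\infty(D)}$. Proposition \ref{psummability} then yields $\bigl(\frac{|\nu|!}{\nu!}\bar d^{\,\nu}\bigr)_\nu \in \ell^\sigma(\cF)$, which together with the finite $E$-contribution gives $(\tau_\nu)\in \ell^\sigma(\cF)$. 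The main obstacle is the $\nu$-uniform admissibility check of the second paragraph, on which the constancy of $C$ and hence the whole summation depend; everything downstream is the same factorization already executed for the Legendre coefficients. Finally, the identical argument applied to the $V^{(m)}$-valued coefficients of $\Psi$, using the bound of Corollary \ref{cor:PsiAnalyt} in place of Lemma \ref{lem:taunubound}, yields the stated $\ell^\sigma$-summability in the $\|\cdot\|_{V}$ (resp. $\|\cdot\|_{V^{(m)}}$) norm.
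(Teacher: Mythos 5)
Your proof is correct and follows exactly the route the paper intends: the paper itself only sketches this step (deferring to Section 3 of \cite{CDS2}), but its announced plan --- sharp bounds on the $\tau_\nu$ via Lemma \ref{lem:taunubound} applied with a $\nu$-dependent admissible radius sequence of the shape \eqref{eq:defrm}, followed by the factorization argument of Proposition \ref{psummability} --- is precisely what you carry out, in close parallel to the Legendre case of Propositions \ref{normofthnu} and \ref{lp}. The $\nu$-uniform admissibility check that you rightly identify as the crux does go through (with the constants of \eqref{eq:defK} one obtains $r$ on the order of $\frac{5}{8}\amin$, independent of $\nu$), so the prefactor in Lemma \ref{lem:taunubound} is a single constant and the rest of your argument is sound.
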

We remark that under the same assumptions, we also 
have $\sigma$-summability of $(\tau_\nu/(\nu+\b1)!)_{\nu\in \cF}$, 
since 
$$
\forall \nu\in \cF:\quad
|\tau_\nu| \geq \frac{|\tau_\nu|}{(\nu+\b1)!}
\;.
$$
\subsection{Best $N$-term convergence rates}
With \eqref{stechkin},
we infer from Proposition \ref{lp} 
and from \eqref{eq:ThNRate} convergence rates 
for ``polynomial chaos'' type approximations
of the posterior density $\Th$.
\begin{theorem}\label{semierror}
If Assumptions \ref{assump1}, \ref{assump2} and 
\ref{summabilityofpsi} hold then 
there is a sequence 
$(\Lambda_N)_{N\in\mathbb{N}} \subset \cF$ 
of index sets with cardinality 
not exceeding $N$ (depending $\sigma$ and on the data $\delta$)
such that the corresponding 
$N$-term truncated gpc Legendre expansions 
$\Theta_{\Lambda_N}$ in \eqref{eq:SLambN} 
satisfy
\beq\label{eq:BestNLegRate}
\|\Th-\Th_{\Lambda_N}\|_{L^2(U,\mu_0(dy))} 
\leq
N^{-(\frac1\sigma - \frac12)}\|(\theta_{\nu})\|_{\ell^\sigma (\cF;\bbR)}
\;.
\eeq
Likewise, for $q=1,\infty$ and for every $N\in \IN$, there
exist sequences $(\Lambda_N)_{N\in \IN}\subset\cF$ 
of index sets (depending, in general, on 
$\sigma$, $q$ and the data)
whose cardinality does not exceed $N$ such that
the $N$-term truncated Taylor sums \eqref{eq:TLamN} 
converge with rate $1/\sigma - 1$, i.e. 
\beq\label{eq:BestNTayRate}
\|\Th - T_{\Lambda_N}\|_{L^q(U,\mu_0(dy))} 
\leq
N^{-(\frac1\sigma - 1)}\|(\tau_{\nu})\|_{\ell^\sigma(\cF;\bbR)}
\;.
\eeq
Here, for $q=\infty$ the norm 
$\|\circ\|_{L^\infty(U;\mu_0)}$ is
the supremum over all $y\in U$.
\end{theorem}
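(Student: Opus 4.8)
The plan is to assemble the theorem from three ingredients already established above: the $\sigma$-summability of the coefficient sequences (Proposition \ref{lp} for the Legendre coefficients $\theta_\nu$, Proposition \ref{prop:tausum} for the Taylor coefficients $\tau_\nu$), the Stechkin-type estimate \eqref{stechkin}, and the explicit truncation-error bounds from Section \ref{ssec:BestNTh}. In each case the recipe is the same: choose $\Lambda_N$ to retain the $N$ coefficients of largest modulus in the appropriate (possibly weighted) sense, and then apply \eqref{stechkin} with source exponent $\sigma$ furnished by Assumption \ref{summabilityofpsi} and target exponent $q$ dictated by the norm.

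For the Legendre bound \eqref{eq:BestNLegRate} I would take $\Lambda_N$ to index the $N$ largest values $|\theta_\nu|$. Proposition \ref{lp} gives $(\theta_\nu)_{\nu\in\cF}\in\ell^\sigma(\cF)$, while Parseval's identity \eqref{eq:Parsev} reduces the $L^2$ truncation error to the $\ell^2$ tail of the decreasing rearrangement of $(|\theta_\nu|)$. Applying \eqref{stechkin} with $q=2$ then reproduces \eqref{eq:ThNRate}, which is exactly the claimed estimate with rate $s=\frac1\sigma-\frac12$.

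For the Taylor bounds \eqref{eq:BestNTayRate} the two norms $q=1,\infty$ are handled in parallel, with $\Lambda_N$ selected according to the relevant weighted coefficients. When $q=\infty$, estimate \eqref{eq:LooAppTh} controls the error by the $\ell^1$ tail $\sum_{\nu\notin\Lambda_N}|\tau_\nu|$; choosing $\Lambda_N$ to capture the $N$ largest $|\tau_\nu|$ and invoking \eqref{stechkin} with $q=1$, legitimate since $(\tau_\nu)\in\ell^\sigma$ by Proposition \ref{prop:tausum}, yields the rate $\frac1\sigma-1$. When $q=1$, estimate \eqref{eq:L1AppTh} bounds the error by the weighted tail $\sum_{\nu\notin\Lambda_N}|\tau_\nu|/(\nu+\b1)!$; since $1/(\nu+\b1)!\le 1$, the sequence $(\tau_\nu/(\nu+\b1)!)$ inherits $\ell^\sigma$-summability by the remark following Proposition \ref{prop:tausum}, and selecting $\Lambda_N$ for these weighted coefficients and again applying \eqref{stechkin} with $q=1$ produces the same rate $\frac1\sigma-1$.

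Given the preceding propositions and truncation bounds, the argument is essentially bookkeeping, and I do not expect a genuine obstacle. The only points demanding care are the matching of exponents in \eqref{stechkin}, namely that target $q=2$ (Legendre) and $q=1$ (both Taylor cases) combined with $\sigma<1$ produce the stated rates, and the observation that for $q=1$ the index set is legitimately chosen for the weighted coefficients $\tau_\nu/(\nu+\b1)!$ rather than the $\tau_\nu$ themselves. Since the theorem permits $\Lambda_N$ to depend on $q$, no single index-set sequence need serve all three estimates simultaneously.
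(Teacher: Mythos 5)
Your proposal is correct and follows essentially the same route as the paper, which likewise obtains the theorem by combining the Stechkin estimate \eqref{stechkin} with the summability results (Proposition \ref{lp} for the $\theta_\nu$, Proposition \ref{prop:tausum} for the $\tau_\nu$) and the truncation bounds \eqref{eq:ThNRate}, \eqref{eq:LooAppTh}, \eqref{eq:L1AppTh}. Your added care about choosing $\Lambda_N$ for the weighted coefficients in the $q=1$ case, and noting that the weighted $\ell^\sigma$ norm is dominated by $\|(\tau_\nu)\|_{\ell^\sigma}$, is consistent with the paper's remark following Proposition \ref{prop:tausum}.
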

\section{Approximation of Expectations under the Posterior}
\label{sec:Expost}
Recall that in our approach to Bayesian estimation, the
expectations under the posterior given data $\delta$ 
are rations of deterministic, 
infinite dimensional parametric integrals $Z'$ and $Z$
with respect to the prior measure $\mu_0$, 
given by \eqref{eq:Z} and \eqref{eq:intpsi}.
For our specific elliptic inverse problem these reduce to
iterated integrals over the coordinates $y_j \in [-1,1]$ 
against a countable product of the uniform 
probability measures $\frac12 dy_j$.
To render this practically feasible,
numerical evaluation of integrals of the form 
\beq\label{eq:Ephiudelta}
\overline{\phi(u)}^\delta = \int_{y\in U} \phi(u(\cdot,y)) \Theta(y) \mu_0(dy) \in S
\eeq
are required for functions $\phi: U \to S$, for
a suitable state space $S$.
Note that the choice $\phi \equiv 1$ gives $Z$. 
For $\phi$ not identically $1$, the integral 
\eqref{eq:Ephiudelta} gives the
(posterior) conditional expectation $\IE_{\mu^\delta}[\phi(u)]$ 
if normalized by $Z$.

For the ellliptic inverse problems studied here,
the choices of $\phi(u)=u$ 
given by \eqref{eq:defmpointcorp} with $G(u)=p$
are of particular interest.
For $p=1$ this gives rise to the need to evaluate the integrals
\beq\label{eq:Epdelta}
\bar{p}^\delta 
= 
\int_{y\in U} p(\cdot,y) \Theta(y) \mu_0(dy) \in V
\eeq
which, when normalized by $Z$, gives the
(posterior) conditioned expectation $\IE_{\mu^\delta}[p].$
We study how to approximate this integral.
With the techniques developed here,
and with Corollary \ref{cor:PsiAnalyt},
analogous results can also be established for 
expectations of $m$ point correlations of 
$G(u)$ as in \eqref{eq:defmpointcorp}, using \eqref{eq:Ephiudelta},
and the normalization constant $Z$.

Our objective is to find constructive algorithms
which achieve the high rates of convergence, in terms 
of number of retained terms $N$ in a gpc expansion,
implied by the theory of the previous section, and
offering the potential of beating the complexity of
Monte Carlo based methods.
The first option to do so is to employ 
{\em sparse tensor numerical integration
scheme over $U$} tailored to the 
regularity afforded by the analytic parameter
dependence of the posteriori density on $y$
and of the integrands in  
\eqref{eq:Ephiudelta}.
This approach is not considered here, but is
considered elsewhere: we
refer to \cite{AScSt} for details and 
numerical experiments.
Here we adopt an approach based on showing that the integrals 
\eqref{eq:Ephiudelta}
allow {\em semianalytic evaluation} in 
log-linear\footnote{Meaning linear multiplied by a logartihmic factor.}
complexity with respect to $N$, the number of
``active'' 
terms in a 
truncated polynomial chaos expansion
of the parametric solution of the forward
problem \eqref{eq:fwdproblem}, \eqref{eq:par}.

To this end, we proceed as follows: based on 
the {\em assumption} that $N$-term gpc approximations
of the parametric forward solutions $p(x,y)$ of 
\eqref{eq:fwdproblem} is available, for example
by the algorithms in \cite{BAS09,CJG11,CCDS11},
we show that it is possible to 
{\em construct separable $N$-term approximations}
of the integrands in 
\eqref{eq:Ephiudelta}.
The existence of such an approximate posterior density 
which is ``close'' to $\Th$ is ensured by Theorem \ref{semierror}, 
provided the (unknown) input data $u$ satisfies certain conditions.
{\em 
We prove that sets $\Lambda_N\subset \cF$ of 
cardinality at most $N$ which afford the truncation errors 
\eqref{eq:BestNLegRate}, \eqref{eq:BestNTayRate} 
can be found in log-linear complexity with respect to $N$} and, 
second, 
{\em that the integrals \eqref{eq:Ephiudelta} with the 
corresponding approximate posterior density 
can be evaluated in such complexity}
and, third, we estimate the errors in the resulting 
conditional expectations.
\subsection{Assumptions and Notation}
\label{ssec:AssNotation}
\begin{ass}\label{ass:exactp}
Given a draw $u$ of the data, 
an exact forward solution $p$ 
of the governing equation \eqref{eq:fwdproblem} 
{\em for this draw of data $u$}
is available at unit cost.
\end{ass}
This assumption is made 
in order to simplify the exposition.
All conclusions remain valid if this assumption is 
relaxed to include an additional 
Finite Element discretization error;
we refer to \cite{AScSt} for details.
We shall use the notion of 
{\em monotone sets of multiindices}.
\begin{definition} \label{def:MonotIndx}
A subset $\Lambda_N \subset \cF$ of finite
cardinality $N$ is called {\em monotone}
if 
(M1) $\{0\}\subset \Lambda_N$ 
and if 
(M2) $\forall 0\ne \nu \in \Lambda_N$ 
it holds that $\nu-e_j \in \Lambda_N$
for all $j\in  \bbI_{\nu}$, where
$e_j\in \{0,1\}^\bbJ$ denotes the index vector
with $1$ in position $j\in \bbJ$ and 
$0$ in all other positions $i \in \bbJ\backslash\{j\}$.
\end{definition}
Note that for 
monotone index sets $\Lambda_N\subset \cF$ 
properties (M1) and (M2) 
in Definition \ref{def:MonotIndx} imply
\beq\label{spynu=spnu}
\IL_{\Lambda_N}(U) 
= {\rm span}\{ y^\nu : \nu\in \Lambda_N \}
= {\rm span}\{ L_\nu : \nu\in \Lambda_N \}
\;.
\eeq
Next, we will assume that 
{\em 
a stochastic Galerkin approximation
of the entire forward map of the parametric,
deterministic solution 
with certain optimality properties
is available.
}
\begin{ass}\label{ass:sGFEM}
Given a parametric representation 
\eqref{assume1} of the unknown data $u$, 
a stochastic Galerkin approximation 
$p_N \in \IL_{\Lambda_N}(U,V)$ 
of the exact forward solution 
of the governing equation
\eqref{eq:fwdproblem} is available at unit cost.
Here the set $\Lambda_N\subset \cF$ 
is a finite subset of ``active'' 
gpc Legendre coefficients whose 
cardinality does not exceed $N$.
In addition, we assume that 
the gpc approximation 
$p_N \in \IL_{\Lambda_N}(U,V)$
is quasi optimal in terms of the 
best $N$-term approximation, i.e. 
there exists $C\geq 1$ independent 
of $N$ such that
\beq\label{eq:uNbestN}
\| p - p_N \|_{L^2(U,\mu_0;V)}
\leq
C N^{ -(1/\sigma  - 1/2)} 
\| (\theta_\nu) \|_{\ell^\sigma(\cF)}
\;.
\eeq
Here $0 < \sigma \leq 1$ 
denotes the summability exponent in 
Assumption \ref{summabilityofpsi}.
Note that best $N$-term approximations
satisfy \eqref{eq:uNbestN} with $C=1$;
we may refer to \eqref{eq:uNbestN} as a
quasi best $N$-term approximation property.
\end{ass}
This best $N$-term convergence rate of 
stochastic Galerkin Finite Element Method (sGFEM) 
approximations follows from results in 
\cite{CDS1,CDS2}, but these results do not 
indicate as to how sequences of sGFEM 
approximations which converge with this
rate are actually constructed.
We refer 
to \cite{CJG11} for the constructive algorithms 
for quasi best $N$-term 
Legendre Galerkin approximations and 
to \cite{CCDS11} for constructive algorithms 
for quasi best $N$-term Taylor approximations
and also to the references there
for details on further details for such sGFEM solvers, 
including space discretization.
In what follows, we work under 
Assumptions \ref{ass:exactp}, \ref{ass:sGFEM}.
\subsection{Best $N$-term based approximate conditional expectation}
\label{ssec:BestNCondEx}
We first address the rates that can be achieved
by the (a-priori not accesssible) best $N$-term
approximations of the posterior density $\Th$ in 
Theorem \ref{semierror}. 
These rates serve as benchmark rates to be achieved
by any constructive procedure.

To derive these rates, we let 
$\Th_N  = \Th_{\Lambda_N}$
denote the best $N$-term Legendre approximations
of the posterior density $\Th$ in Theorem 
\ref{semierror}. 
With \eqref{eq:uNbestN}, we estimate
$$
\begin{array}{rcl}
\| \bar{p}^\delta - \bar{p}_N^\delta \|_V
& = & \ds
\left\| \int_U \left( \Th p - \Th_N p_N \right) \mu_0(dy) \right\|_V
\\
& = & \ds
\left\| 
\int_U 
\left( (\Th - \Th_N) p + \Th_N ( p-p_N) \right) \mu_0(dy) \right\|_V
\\
& \leq & \ds
\int_U |\Th - \Th_N | \| p \|_V \mu_0(dy) 
+
\| \Th_N \|_{L^2(U)} \| p - p_N \|_{L^2(U,\mu_0;,V)}
\\
& \leq & \ds
\| \Th - \Th_N \|_{L^2(U)} \| p \|_{L^2(U,\mu_0;V)}
+
\| \Th_N \|_{L^2(U)} \| p - p_N \|_{L^2(U,\mu_0;V)}
\\
& \leq & \ds
C N^{-(\frac{1}{\sigma} - \frac12)}
\;.
\end{array}
$$
With $T_N = T_{\Lambda_N}$ denoting a best $N$-term Taylor 
approximation of $\Th$ in Theorem \ref{semierror} 
we obtain in the same fashion the bound
$$
\begin{array}{rcl}
\| \bar{p}^\delta - \bar{p}_N^\delta \|_V
& = & \ds
\left\| \int_U \left( \Th p - T_N p_N \right) \mu_0(dy) \right\|_V
\\
& = & \ds
\left\| 
\int_U 
\left( (\Th - T_N) p + T_N ( p-p_N) \right) \mu_0(dy) \right\|_V
\\
& \leq & \ds
\int_U |\Th - T_N | \| p \|_V \mu_0(dy) 
+
\| T_N \|_{L^\infty(U)} \| p - p_N \|_{L^1(U,\mu_0;V)}
\\
& \leq & \ds
\| \Th - T_N \|_{L^1(U,\mu_0)} \| p \|_{L^\infty(U,\mu_0;V)}
+
\| T_N \|_{L^\infty(U)} \| p - p_N \|_{L^2(U,\mu_0;V)}
\\
& \leq & \ds
C N^{-(\frac{1}{\sigma} - 1)}
\;.
\end{array}
$$

We now address question ii) raised at the beginning of Section
\ref{ssec:BestNTh}, i.e. 
{\em 
the design of practical algorithms for the construction of 
sequences 
$(\Lambda_N)_{N\in \IN} \subset \cF$
such that the best-$N$ term convergence rates
asserted in Theorem \ref{semierror} are attained.
} 
We develop the approximation in detail for \eqref{eq:Epdelta}; 
similar results for \eqref{eq:Ephiudelta} may be developed
for various choices of $\phi$.
\subsection{Constructive $N$-term Approximation of the Potential $\Phi$}
\label{ssec:ApproxPot}
We show that, from the quasi best $N$-term optimal
stochastic Galerkin approximation 
$u_N\in \IL_{\Lambda_N}(U,V)$ 
and, in particular,
from its (monotone) index set $\Lambda_N$, a corresponding
$N$-term approximation $\Phi_N$ of the potential $\Phi$
in \eqref{eq:lsq} can be computed.
We denote the observation corresponding to the
stochastic Galerkin approximation of the 
system response $p_N$ by $\cG_N$, i.e.
the mapping
\beq \label{eq:defcGN}
U\ni y\mapsto \cG_N(u)|_{u=\bar a + \sum_{j\in \bbJ} y_j\psi_j} 
= 
(\cO\circ G_N)(u)|_{u=\bar a + \sum_{j\in \bbJ} y_j\psi_j}
\eeq
where $G_N(u) = p_N\in \IL_{\Lambda_N}(U;V)$. 
By the linearity and boundedness of the observation
functional $\cO(\cdot)$ then 
$\cG_N \in \IL_{\Lambda_N}(U;\IR^K)$; 
in the following, we assume for simplicity $K=1$ 
so that 
$\cG_N|_{u=\bar a + \sum_{j\in \bbJ} y_j\psi_j} \in \IL_{\Lambda_N}(U)$.
We then denote by $U\ni u\mapsto \Phi$ the potential 
in \eqref{eq:lsq} and by $\Phi_N$ the potential of the stochastic
Galerkin approximation $\cG_N$ of the forward observation map.
For notational convenience, we suppress the explicit dependence
on the data $\delta$ in the following and assume that the 
Gaussian covariance $\Gamma$ of the observational noise $\eta$
in \eqref{eq:obs1} is the identity: $\Gamma = I$. 
Then, for every $y\in U$, 
with $u =\bar a + \sum_{j\in \bbJ} y_j \psi_j$
the exact potential $\Phi$ and the potential
$\Phi_N$
based on $N$-term approximation $p_N$ of the 
forward solution take the form
\beq \label{eq:cGcGN}
\Phi(y)   = \frac{1}{2} (\delta - \cG(u))^2,
\quad
\Phi_N(y) = \frac{1}{2} (\delta - \cG_N(u))^2
\;.
\eeq
By Lemma \ref{l:two}, these potentials admit
extensions to holomorphic functions of the variables 
$z\in \cS_\rho$ in the strip $\cS_\rho$ 
defined in \eqref{eq:Sprod}. 
Since $\Lambda_N$ is monotone, we may write 
$p_N\in \IL_{\Lambda_N}(U,V)$ and 
$\cG_N \in \IL_{\Lambda_N}(U)$ in terms of their
(uniquely defined) Taylor expansions about $y=0$:
\beq\label{eq:cGNTaylor}
\cG_N(u) = \sum_{\nu\in \Lambda_N} g_\nu y^\nu \;.
\eeq
This implies, for every $y\in U$,
$ \Phi_N(y) = \delta^2 -2\delta\cG_N(y) + (\cG_N(y))^2$
where 
$$
(\cG_N(y))^2
=
\sum_{\nu,\nu' \in \Lambda_N} g_\nu g_{\nu'} y^{\nu+\nu'}
\in 
\IL_{\Lambda_N+\Lambda_N}(U)
$$
has a higher polynomial degree and possibly $O(N^2)$ 
coefficients.
Therefore, an exact evaluation of a gpc approximation
of the potential $\Phi_N$ might incur 
loss of linear complexity with respect to $N$. 
To preserve log-linear in $N$ complexity, we
perform an $N$-term 
truncation $[\Phi_N]_{\# N}$ of $\Phi_N$,
thereby introducing an additional error which, as 
we show next, is of the same order as the error
of gpc approximation of the system's response.
The following Lemma is stated in slightly  more 
general form than is presently needed, since it will
also be used for the error analysis of the posterior
density ahead.
\begin{lemma}\label{lem:truncN}
Consider two sequences 
$(g_\nu)\in \ell^\sigma(\cF)$, 
$(g'_{\nu'})\in \ell^\sigma (\cF')$, 
$0 < \sigma \leq 1$.
Then
$$
(g_\nu g'_{\nu'})_{(\nu,\nu')\in \cF\times \cF'}
\in 
\ell^\sigma (\cF\times \cF')
$$
and there holds
\beq\label{eq:prodellpest}
\| (g_\nu g'_{\nu'}) \|^\sigma_{\ell^\sigma(\cF\times \cF')}
\leq
\| (g_\nu)     \|^\sigma_{\ell^\sigma (\cF)}
\| (g'_{\nu'}) \|^\sigma_{\ell^\sigma (\cF')}
\;.
\eeq
Moreover, a best $N$-term truncation 
$[ \circ ]_{\#}$
of products of corresponding  best
$N$-term truncated Taylor polynomials, defined by 
\beq \label{eq:bestN}
\left[
\left(
\sum_{\nu\in \Lambda_N} g_\nu y^\nu
\right)
\left(
\sum_{\nu'\in \Lambda'_N} g'_{\nu'} y^{\nu'}
\right)
\right]_{\# N}
:=
\sum_{ (\nu,\nu') \in \Lambda^1_N}
g_\nu g'_{\nu'} y^{\nu+\nu'}
\in 
\IL_{\Lambda_N^1}(U)
\eeq
where $\Lambda^1_N\subset \cF\times \cF'$ 
is the set of sums of index pairs 
$(\nu,\nu')\in \cF\times \cF'$
of at most $N$ largest (in absolute value) 
products $g_\nu g_{\nu'}$, 
has a pointwise error in $U$ bounded by
\beq\label{eq:proderrlp}
N^{-(\frac1\sigma - 1)} \|(g_\nu)\|_{\ell^\sigma(\cF)} \|(g'_{\nu'})\|_{\ell^\sigma(\cF')} 
\;.
\eeq
Moreover, if the index sets 
$\Lambda_N\subset \cF$ and $\Lambda'_N\subset\cF'$
are each monotone, the index set 
$\bar{\Lambda}_N := \{ \nu+\nu': (\nu,\nu')\in \Lambda^1_N \}\subset \cF$
can be chosen monotone with cardinality at most $2N$.
\end{lemma}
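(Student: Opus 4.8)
The plan is to treat the three assertions in turn, since they are of increasing difficulty.

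For the tensor-product summability bound \eqref{eq:prodellpest} I would simply write the $\ell^\sigma$-quasinorm of the product sequence as a nonnegative double series and factor it,
$$
\sum_{(\nu,\nu')\in\cF\times\cF'} |g_\nu g'_{\nu'}|^\sigma
=
\Bigl(\sum_{\nu\in\cF}|g_\nu|^\sigma\Bigr)
\Bigl(\sum_{\nu'\in\cF'}|g'_{\nu'}|^\sigma\Bigr),
$$
which is exactly \eqref{eq:prodellpest} and shows $(g_\nu g'_{\nu'})\in\ell^\sigma(\cF\times\cF')$. This is a direct application of Tonelli's theorem for nonnegative double series; the range $0<\sigma\le 1$ enters only in reading the two sides as quasinorms.

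For the pointwise truncation error \eqref{eq:proderrlp} I would apply the Stechkin-type estimate \eqref{stechkin} with $q=1$ to the product sequence $(g_\nu g'_{\nu'})$. Writing the full product as $\sum_{(\nu,\nu')\in\cF\times\cF'} g_\nu g'_{\nu'}\,y^{\nu+\nu'}$ and subtracting the truncation \eqref{eq:bestN}, the error is $\sum_{(\nu,\nu')\notin\Lambda^1_N} g_\nu g'_{\nu'}\,y^{\nu+\nu'}$. Since $|y^{\nu+\nu'}|\le 1$ for every $y\in U$ and every multiindex, this error is bounded in modulus, uniformly on $U$, by the $\ell^1$-tail $\sum_{(\nu,\nu')\notin\Lambda^1_N}|g_\nu g'_{\nu'}|$. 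Because $\Lambda^1_N$ indexes the $N$ largest products, \eqref{stechkin} with $q=1$ yields the tail bound $N^{-(1/\sigma-1)}\|(g_\nu g'_{\nu'})\|_{\ell^\sigma(\cF\times\cF')}$, and \eqref{eq:prodellpest} converts this into \eqref{eq:proderrlp}. The only point requiring care is that several pairs may share a sum $\nu+\nu'$; combining the coefficients of equal monomials can only decrease the modulus, so the bound is unaffected.

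The real work lies in the monotonicity and cardinality claim for $\bar\Lambda_N=\{\nu+\nu':(\nu,\nu')\in\Lambda^1_N\}$. The image of $N$ pairs under the sum map already has at most $N$ elements, so the difficulty is purely that $\bar\Lambda_N$ need not be downward closed in the sense of Definition \ref{def:MonotIndx}. My first observation would be that the downward closure is always contained in the Minkowski sum $\Lambda_N+\Lambda'_N$: if $\kappa\le\nu+\nu'$ componentwise, then splitting $\kappa_j=\alpha_j+\beta_j$ with $\alpha_j=\min(\kappa_j,\nu_j)\le\nu_j$ and $\beta_j=\kappa_j-\alpha_j\le\nu'_j$ exhibits $\kappa=\alpha+\beta$ with $\alpha\in\Lambda_N$ and $\beta\in\Lambda'_N$ by monotonicity of the two factor sets; hence the closure is monotone. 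The delicate point is the cardinality, since the full Minkowski sum can be of order $N^2$, so a crude closure does not suffice.

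To recover the factor $2N$ I would exploit the freedom in the phrase \emph{can be chosen}: spending a budget of $2N$ rather than $N$ terms costs nothing in \eqref{eq:proderrlp}, because $(2N)^{-(1/\sigma-1)}\le N^{-(1/\sigma-1)}$ for $0<\sigma\le 1$. Concretely, I would arrange $\Lambda^1_N$ to be downward closed in the product order on $\cF\times\cF'$, so that its image $\bar\Lambda_N$ under the order-preserving sum map is automatically monotone; the passage from a largest-$N$ selection to such a product-downward-closed selection is where the doubling is absorbed. The main obstacle, and the step I would spend the most effort justifying, is the uniform $\le 2N$ bound on this completion. I expect to control it by a charging argument: each predecessor $\mu-e_j$ of a retained sum $\mu=\nu+\nu'$ is itself a sum of elements of the monotone sets, namely $(\nu-e_j)+\nu'$ or $\nu+(\nu'-e_j)$, and one injects the newly added indices into the $N$ retained pairs along decreasing chains so that at most one new index is charged to each original pair.
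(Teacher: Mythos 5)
Your treatment of the first two assertions is correct and coincides with the paper's own proof: \eqref{eq:prodellpest} is obtained there by exactly the same factorization of the nonnegative double sum, and \eqref{eq:proderrlp} by majorizing the pointwise error by the tail $\sum_{(\nu,\nu')\notin\Lambda^1_N}|g_\nu g'_{\nu'}|$ (using $|y^{\nu+\nu'}|\leq 1$ on $U$) and then invoking \eqref{stechkin} with $q=1$ together with \eqref{eq:prodellpest}. Your remark that merging coefficients of coincident monomials only decreases the modulus is correct and harmless.

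The gap is in the final claim. For monotonicity you and the paper use the same device: any $\kappa\leq\nu+\nu'$ splits componentwise as $\alpha+\beta$ with $\alpha\leq\nu$, $\beta\leq\nu'$ (the paper applies this only to $\kappa=\mu-e_j$), so the Minkowski sum $\Lambda_N+\Lambda'_N$ is monotone and contains $\bar{\Lambda}_N$; that part of your argument is fine. The paper then simply asserts that $\Lambda_N+\Lambda'_N$ has cardinality at most $2N$. Your suspicion that this set can be of order $N^2$ is well founded: taking $\Lambda_N=\Lambda'_N=\{k e_1:0\leq k\leq m\}\cup\{k e_2:0\leq k\leq m\}$, the Minkowski sum contains the full box $\{je_1+ke_2: 0\leq j,k\leq m\}$ of cardinality $(m+1)^2$ while $N=2m+1$. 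So you have correctly identified that the crude closure does not yield $2N$ --- but your proposed repair does not close the gap. Downward-closing $\Lambda^1_N$ in the product order on $\cF\times\cF'$ is not a bounded-cost operation: a single pair $(\nu,\nu')$ already has $\prod_j(\nu_j+1)\cdot\prod_j(\nu'_j+1)$ predecessors, all of which would have to be adjoined, so the completed set (and hence its image under the sum map) can far exceed $2N$. The charging argument as sketched cannot work either: a single retained sum $\mu$ has $\#\bbI_\mu$ immediate predecessors $\mu-e_j$, which is not bounded by one, and the closure must then be iterated along all decreasing chains. As it stands the $2N$ cardinality bound is established neither by your argument nor, in fact, by the paper's one-line assertion; a rigorous version would have to either weaken the cardinality claim (e.g.\ accept the monotone superset $\Lambda_N+\Lambda'_N$, whose cardinality is only bounded by $N^2$ in general) or impose further structure on $\Lambda_N$ and $\Lambda'_N$, and the downstream complexity count in Remark \ref{remk:monotone} would need to be revisited accordingly.
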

\begin{proof}
We calculate
$$
\begin{array}{rcl}
\ds
\| g_\nu g'_{\nu'} \|^\sigma_{\ell^\sigma(\cF\times\cF)}
& = & \ds 
\sum_{\nu\in \cF} \sum_{\nu'\in \cF} |g_\nu g'_{\nu'} |^\sigma
= 
\sum_{\nu\in \cF} \left( |g_\nu|^\sigma \sum_{\nu'\in \cF} |g'_{\nu'}|^\sigma \right)
\\
& = & \ds 
\|(g_\nu)\|^\sigma_{\ell^\sigma(\cF)} \|(g'_{\nu'})\|^\sigma_{\ell^\sigma(\cF)} 
\;.
\end{array}
$$
Since $(g_\nu g'_{\nu'}) \in \ell^\sigma(\cF\times \cF)$,
we may apply \eqref{stechkin} with \eqref{eq:prodellpest}
as follows.
$$
\begin{array}{l}
\ds
\left\|
\left[
\sum_{\nu\in \Lambda_N} \sum_{\nu'\in \Lambda'_N}
g_\nu g'_{\nu'} y^{\nu'+\nu}
\right]
-
\left[
\sum_{\nu\in \Lambda_N} \sum_{\nu'\in \Lambda'_N}
g_\nu g'_{\nu'} y^{\nu'+\nu}
\right]_{\# N}
\right\|_{L^\infty(U)}
\\ \mbox{ } \\
\leq 
\ds
\sum_{(\nu,\nu')\in \cF\times \cF \backslash \Lambda^1_N}
| g_\nu g'_{\nu'} | 
\leq  \ds
N^{-(\frac1\sigma - 1)} \|(g_\nu)\|_{\ell^\sigma(\cF)} \|(g'_{\nu'})\|_{\ell^\sigma(\cF)} 
\;.
\end{array}
$$
Evidently, $\bar{\Lambda}_N \subseteq \Lambda_N + \Lambda'_N$
and the cardinality of the set $\Lambda_N + \Lambda'_N$ is at most $2N$.
If $\Lambda_N$ and $\Lambda'_N$ are monotone,
then $\Lambda_N + \Lambda'_N$ is monotone. 
To see it, let $\mu \in \Lambda_N + \Lambda'_N$. 
Then 
$\mu = \nu + \nu'$ for some $\nu\in\Lambda_N$, 
$\nu'\in \Lambda'_N$, and 
$\bbI_\mu = \bbI_{\nu} \cup \bbI_{\nu'}$.
Let $0\ne \mu$, $j\in \bbI_\mu$ and assume 
w.l.o.g. that $j\in \bbI_\nu$.
Then 
$\mu - e_j = (\nu-e_j) + \nu' \in \Lambda_N + \Lambda'_N$
by the assumed monotonicity of the set $\Lambda_N$.
If $j\in \bbI_{\nu'}$, the argument is analogous.
Therefore $\mu-e_j \in \Lambda_N + \Lambda'_N$ 
for every $j\in \bbI_\mu$. 
Hence $\Lambda_N + \Lambda'_N\subset \cF$ is monotone.
\end{proof}
Lemma \ref{lem:truncN} is key to the analysis
of consistency errors in the approximate evaluation
of $N$-term truncated power series and, in particular,
of the potential $\exp(-\Phi(u;\delta))$ which appears
in the posterior density $\Theta$. It crucially 
involves Taylor-type polynomial chaos expansions.
Expansions based on Legendre (or other) univariate
polynomial bases can be covered by Lemma \ref{lem:truncN}
by conversion to monomial bases, using \eqref{spynu=spnu}, 
as long as $N$-term truncations are restricted to 
monotone index sets $\Lambda_N \subset \cF$.

Applying Lemma \ref{lem:truncN} with $\cF' = \cF$
and with $(g'_{\nu'})_{\nu'\in \cF'} = (g_\nu)_{\nu\in \cF}$, 
we find

\beq\label{eq:truncNPhiN}
\begin{array}{rcl}
\ds
\sup_{y\in U} 
\left|
\Phi_N(y) - \left[ \Phi_N(y) \right]_{\#N}
\right|
&=& \ds
\sup_{y\in U}
\left|
(\cG_N(y))^2 - \left[(\cG_N(y))^2\right]_{\#N}
\right|
\\
&\leq& \ds 
N^{-(\frac1\sigma - 1)} \|(g_\nu)\|_{\ell^\sigma (\cF)}^2
\;.
\end{array}
\eeq
%
%
\subsection{Constructive $N$-term approximation of $\Th=\exp(-\Phi)$}
\label{ssec:ConstNTermTh}
With the $N$-term approximation $[\Phi_N]_{\#N}$, we 
now define the 
{\em constructive $N$-term 
     approximation $\Th_N$ of the posterior density}.
We continue to work under 
Assumption \ref{ass:sGFEM}, i.e. 
{\em 
that $N$-term 
truncated gpc-approximations $p_N$ 
of the forward solution $p(y)=G(u(y))$ 
of the parametric problem are available which 
satisfy \eqref{eq:uNbestN}.
}
For an integer $K(N)\in \bbN$ to be selected
below, we define
\beq\label{eq:Thconstr}
\Th_N 
=
\sum_{k=0}^{K(N)}
\frac{(-1)^k}{k!} \left[ ([\Phi_N]_{\#N}])^k \right]_{\#N} 
\;.
\eeq
We then estimate (all integrals are 
with respect to the prior measure $\mu_0(dy)$)
$$
\begin{array}{l}
\ds
\| \Th - \Th_N \|_{L^1(U)}
= 
\left\| 
e^{-\Phi} - e^{-[\Phi_N]_{\#N}} + e^{-[\Phi_N]_{\#N}} 
-
\sum_{k=0}^{K(N)}
\frac{(-1)^k}{k!} \left[ ([\Phi_N]_{\#N}])^k \right]_{\#N}
\right\|_{L^1(U)}
\\
\ds
\leq 
\left\| 
e^{-\Phi} - e^{-[\Phi_N]_{\#N}}
\right\|_{L^1(U)}
+ 
\left\| 
e^{-[\Phi_N]_{\#N}} 
-
\sum_{k=0}^{K(N)}
\frac{(-1)^k}{k!} \left[ ([\Phi_N]_{\#N}])^k \right]_{\#N}
\right\|_{L^1(U)}
\\
\ds
=: I + II \;.
\end{array}
$$
We estimate both terms separately.

For term $I$, we observe that due to 
$x=[\Phi_N]_{\#N} - \Phi \geq 0$
for sufficiently large values of $N$,
it holds $0\leq 1 - e^{-x} \leq x$, 
so that by the triangle inequality and 
the bound \eqref{eq:truncNPhiN}
$$
\begin{array}{rcl}
I &=& \ds
 \left\| e^{-\Phi}(1-e^{\Phi - [\Phi_N]_{\#N}}) \right\|_{L^1(U)} 
\leq 
  \left\| \Th \right\|_{L^\infty(U)}  
  \left\| 1 - e^{-([\Phi_N]_{\#N} - \Phi)}\right\|_{L^1(U)}
\\
& \leq & \ds
 \left\| \Th \right\|_{L^\infty(U)} 
 \left\| \Phi - [\Phi_N]_{\#N} \right\|_{L^1(U)}
\leq 
C \left(
\left\| \Phi - \Phi_N \right\|_{L^1(U)} 
+
\left\| \Phi_N - [\Phi_N]_{\#N} \right\|_{L^1(U)}
\right)
\\
& \leq & \ds
\left\| p-p_N \right\|_{L^2(U,V)} 
+
CN^{-(\frac1x\sigma - 1)} 
\leq
C N^{-(\frac1\sigma - 1)}
\end{array}
$$
where $C$ depends on $\delta$, but is independent of $N$.
In the preceding estimate,
we used that 
$\Phi > 0$ and $0\leq \Th = \exp(-\Phi) < 1$
imply
$$
\left\| \Phi - \Phi_N \right\|_{L^1(U)}
\leq 
\| \cO \|_{V^*} \| p - p_N \|_{L^2(U,V)}
\left(
2|\delta| + \|\cO\|_{V^*} \| p+p_N \|_{L^2(U,V)}
\right)
\;.
$$
We turn to term $II$. Using the (globally convergent)
series expansion of the exponential function, 
we may estimate with the triangle inequality
\beq \label{eq:EstII}
\begin{array}{rcl}
II & \leq & \ds
\left\| R_{K(N)} \right\|_{L^1(U)}
+
\sum_{k=0}^{K(N)} 
\frac{1}{k!}
\left\| 
([\Phi_N]_{\#N})^k
-
\left[ ([\Phi_N]_{\#N})^k \right]_{\#N}
\right\|_{L^1(U)}
\end{array}
\eeq
where the remainder $R_{K(N)}$ equals
\beq\label{eq:defRKN}
R_{K(N)} 
= 
\sum_{k=K(N)+1}^\infty \frac{(-1)^k}{k!} ([\Phi_N]_{\#N}])^k
\;.
\eeq
To estimate the second term in the bound \eqref{eq:EstII}
we claim that for every $k,N\in \bbN_0$ holds
\beq\label{eq:kbound}
\left\| 
([\Phi_N]_{\#N})^k 
- 
\left[ ([\Phi_N]_{\#N})^k \right]_{\#N}
\right\|_{L^\infty(U)}
\leq
N^{-(\frac1\sigma - 1)} \| (g_\nu) \|^{2k\sigma}_{\ell^\sigma(\cF)}
\;.
\eeq
We prove \eqref{eq:kbound} for arbitrary, 
fixed $N\in \bbN$ by induction with respect to $k$.
For $k=0,1$, the bound is obvious.
Assume now that the bound has been established for all
powers up to some $k\geq 2$.
Writing 
$([\Phi_N]_{\#N})^{k+1} = ([\Phi_N]_{\#N})^k [\Phi_N]_{\#N}$
and denoting the sequence of Taylor coefficients of
$[\Phi_N]^k$ by $g'_{\nu'}$ with 
$\nu' \in (\cF\times \cF)^k \simeq \cF^{2k}$,
we note that by $k$-fold application of 
\eqref{eq:prodellpest} it follows
$\| (g'_{\nu'}) \|^\sigma_{\ell^\sigma(\cF^{2k})} 
 \leq 
 \|(g_\nu)\|^{2k\sigma}_{\ell^\sigma(\cF)}$.
By the definition of $[\Phi_N]_{\#N}$,
the same bound also holds for the coefficients of 
$([\Phi_N]_{\#N})^k$, for every $k\in \bbN$.
We may therefore apply Lemma \ref{lem:truncN} 
to the product $([\Phi_N]_{\#N})^k [\Phi_N]_{\#N}$
and obtain the estimate \eqref{eq:kbound} with
$k+1$ in place of $k$ from \eqref{eq:proderrlp}.
Inserting \eqref{eq:kbound} into \eqref{eq:EstII}, 
we find
\beq\label{eq:EstII'}
\begin{array}{rcl}
\ds
\sum_{k=0}^{K(N)} 
\frac{1}{k!}
\left\| 
([\Phi_N]_{\#N})^k
-
\left[ ([\Phi_N]_{\#N})^k \right]_{\#N}
\right\|_{L^1(U)}
&\leq &\ds
N^{-(\frac1\sigma  - 1)} 
\sum_{k=0}^{K(N)} \frac{1}{k!} \| (g_\nu) \|_{\ell^\sigma(\cF)}^{2k\sigma}
\\
&\leq & \ds
N^{-(\frac1\sigma - 1)} 
\exp(\| (g_\nu) \|_{\ell^\sigma(\cF)}^{2\sigma})
\;.
\end{array}
\eeq
In a similar fashion, we estimate the remainder 
$R_{K(N)}$ in \eqref{eq:EstII}: 
as the truncated Taylor expansion $[\Phi_N]_{\#N}$
converges pointwise to $\Phi_N$ and to $\Phi > 0$,
for sufficiently large $N$, we have
$[\Phi_N]_{\#N}>0$ for all $y\in U$, so that the series
\eqref{eq:defRKN} is alternating and 
converges pointwise. Hence its truncation error is 
bounded by the leading term of the tail sum:
\beq \label{eq:RKNest}
\| R_{K(N)} \|_{L^\infty(U)} 
\leq 
\frac{\| [\Phi_N]_{\#N} \|_{L^\infty(U)}^{K(N)+1}}{(K(N)+1)!} 
\leq 
\frac{ \| (g_\nu) \|_{\ell^1(\cF)}^{2(K(N)+1)}}{(K(N)+1)!} 
\eeq
Now, given $N$ sufficiently large, we choose $K(N)$ 
so that the bound \eqref{eq:RKNest} is smaller than
\eqref{eq:EstII'}, which leads with Stirling's 
formula in \eqref{eq:RKNest} to the requirement
\beq \label{eq:Kchoice}
(K+1) \ln\left(\frac{Ae}{K}\right) \leq \ln B - (\frac1\sigma -1)\ln N
\eeq
for some constants $A, B>0$ independent of $K$ and $N$ 
(depending on $p$ and on $(g_\nu)$).
One verifies that \eqref{eq:Kchoice} is satisfied by
selecting $K(N) \simeq  \ln N$. 

Therefore, under Assumptions \ref{ass:exactp} and \ref{ass:sGFEM},
we have shown how to construct 
an $N$-term approximate posterior density $\Th_N$
by summing $K = O(\ln N)$ many terms in \eqref{eq:Thconstr}.
The approximate posterior density has at most $O(N)$ nontrivial
terms, which can be integrated exactly against the separable
prior $\mu_0$ over $U$ in complexity that behaves log-linearly
with respect to $N$, under 
Assumptions \ref{ass:exactp}, \ref{ass:sGFEM}:
the construction of $\Th_N$ requires $K$-fold performance of the 
$[\cdot]_{\#N}$-truncation operation in \eqref{eq:bestN}
of products of Taylor expansions, 
with each factor having at most $N$ nontrivial entries,
amounting altogether to solving (possibly approximately)
$
O(K N \ln N) = O(N (\ln N)^2)
$
forward problems.
\begin{remark} \label{remk:monotone}
Inspecting the (constructive) proof of Lemma \ref{lem:truncN}
and the definition of the $N$-term approximation $\Theta_N$
of the posterior density \eqref{eq:Thconstr}, we see that 
the index set $\Lambda_N^\Theta$ of active Taylor gpc 
coefficients of $\Theta_N$ satisfies 
$$
\Lambda_N^\Theta
\subset 
\overline{\Lambda_N^\Theta}
:=
(\Lambda_N + \Lambda_N) + ... (K(N)-{\rm times})...+(\Lambda_N+\Lambda_N)
\subset \cF
$$
where $\Lambda_N\subset \cF$ is the set of 
$N$ active gpc coefficients in the approximate 
forward solver in Assumption \ref{ass:sGFEM}.

If, in particular, $\Lambda_N$ is monotone, so is the set 
$\overline{\Lambda_N^\Theta}$. This follows by induction 
over $K$ with the argument in the last part of the proof of 
Lemma \ref{lem:truncN}. Moreover, 
the cardinality of $\Lambda^\Theta_N$ 
is bounded by $2NK(N) \lesssim N\log(N)$.
\end{remark}

\section{Conclusions}
\label{sec:ConGen}

This paper is concerned with formulation of Bayesian
inversion as a problem in infinite 
dimensional parametric integration, and the construction
of algorithms which exploit analyticity of the forward
map from state space to data space to approximate
these integration problems. In this section 
we make some concluding remarks about the implications
of our analysis. We discuss computational complexity
for such problems, and we discuss further directions
for research.

\subsection{Computational Cost: Idealized Analysis} 
Throughout we have been guided by the desire to
create algorithms which outperform Monte Carlo
based methods. To gain insight into this issue
we first proceed under the (idealized) setting
of Assumptions \ref{ass:exactp} and \ref{ass:sGFEM}, which
imply that the PDE \eqref{eq:fwdproblem}, 
for fixed parameter $u$,
and its parametric solution, for all $u \in U$, can both
be approximated at unit cost. 
In this situation we can study the {\em cost per unit error}
of Monte Carlo and gpc methods as follows. We neglect
logarithmic corrections for clarity of exposition. 
The Monte Carlo method will require ${\cal O}(N)$ work
to achieve an error of size $N^{-\frac12}$, where $N$
is a number of samples from the prior. To obtain error
$\epsilon$ thus requires work of order ${\cal O}(\epsilon^{-2}).$
Recall the parameter $\sigma$ from
Assumption \ref{summabilityofpsi} which
measures the rate of decay of the input fluctations and,
as we have shown, governs the smoothness properies
of the analytic map from unknown to data.
The gpc method based on best $N$ term approximation
requires work which is linear in $N$ to obtain an error
of size $N^{-(1/\sigma - 1)}$. Thus to
obtain error $\epsilon$ requires work of order 
${\cal O}(\epsilon^{\sigma/(1-\sigma)}).$ 
For all $\sigma<2/3$ the complexity of the new
gpc methods, under our idealized assumptions,
is superior to that of Monte Carlo based methods.

\subsection{Computational Cost: Practical Issues} 

The analysis of the previous subsection
provides a clear way to
understand the potential of the methods introduced
in this paper and is useful for communicating the
central idea. However, by working under the
stated Assumptions \ref{ass:exactp} and \ref{ass:sGFEM}, 
some aspects of the true computational complexity 
of the problem are hidden.
In this subsection we briefly 
discuss further issues that arise. 
Throughout we 
assume that the desired form of the unknown diffusion
coefficient for the forward PDE \eqref{eq:fwdproblem}
is given by \eqref{assume1} in the case where
$\bbJ=\bbN:$
\beq
\label{assume1new}
u(x,y)=\bar a(x)+ \sum_{j\in \bbN} y_j\psi_j(x),\quad x\in D.
\eeq
To quantify the complexity of the problem we assume that, for some $b>0$,
\begin{equation}
\label{eq:decay}
\|\psi_j\|_{L^{\infty}(D)} \asymp j^{-(1+b)}.
\end{equation}
Then Assumption \ref{summabilityofpsi} holds for any $\sigma>(1+b)^{-1}.$
In practice, to implement either Monte Carlo or gpc
based methods it is necessary to 
truncate the series \eqref{assume1new} to $J$ terms to obtain
\beq
\label{assume1m}
\um(x,y)=\bar a(x)+ \sum_{1 \le j \le J} y_j\psi_j(x),\quad x\in D.
\eeq
To quantify the computational cost of the problem we assume
that the non-parametric forward problem \eqref{eq:fwdproblem} 
with fixed $u \in U$, incurs costs $\cc(J,\epsilon)$ 
to make an error of size $\epsilon$ in $V$. 
Likewise we assume that the parametric forward
problem \eqref{eq:fwdproblem}, for all $u \in U$,
incurs costs $\cp(N,J,\epsilon)$ to make an error
of $\epsilon$ in $L^2(U,\mu_0(dy);V)$ 
via computation of an approximation to a quasi-optimal 
best $N$ term gpc approximation.

Both Monte Carlo based and gpc based
methods will incur an error caused by truncation 
to $J$ terms. Using the Lipschitz
property of $\cG$ expressed in \eqref{eq:LipcG}, together
with the  arguments developed in \cite{CDS10}\footnote{The key 
idea in \cite{CDS10} is that
error in the forward problem transfers to error in the Bayesian inverse
problem, as measured in the Hellinger metric and hence for a wide
class of expectations; the analysis in \cite{CDS10} is devoted to
Gaussian priors and situations where the Lipschitz constant of
the forward model depends on the realization of the input data $u$
and Fernique theorem is used to control this dependence; this
is more complex than required here, because the
Lipschitz constants in \eqref{eq:LipcG} here do not depend on the
realization of the input data $u$. 
For these reasons we
do not feel it is necessary to provide a proof of the error incurred
by truncation.}
we deduce that the error in computing expectations caused by truncation 
of the input data to $J$ terms is proportional to
$$\sum_{j=J+1}^\infty \|\psi_j\|_{L^{\infty}(D)}.$$
Under assumption \eqref{eq:decay} this is of order 
${\cal O}(J^{-b})$ and since $b$ may be chosen arbitrarily
close to $1/\sigma-1$ we obtain an error 
${\cal O}(J^{1-1/\sigma})$ from truncation. 

The total error for Monte Carlo based methods using 
$N$ samples is then of the form
$$E_{\rm {mc}}=\frac{C(J)}{N^{\frac12}}+{\cal O}(J^{1-1/\sigma})+
\epsilon$$
In the case where $C(J)$ is independent of $J$, which arises
for pure Monte Carlo methods based on prior sampling and
for the independence MCMC sampler \cite{Liu,RC}, 
choosing $N$ and $J$ to balance the error gives
$N={\cal O}(\epsilon^{-2})$ and $J={\cal O}(\epsilon^{-\sigma/(1-\sigma)})$ and, with these relationships imposed, the
cost is $N \times \cc(J,\epsilon)$ since one forward PDE solve is made
at each step of any Monte Carlo method.
In practice standard Monte Carlo sampling may be ineffective,
because samples from the prior are not well-distributed
with respect to the posterior density; this is especially
true for problems with large numbers of observations and/or
small observational noise. In this case MCMC methods may
be favoured and it is possible that $C(J)$ will grow with $J$;
see \cite{RSwhen} for an analysis of this effect for
random walk Metropolis algorithms. Balancing the error
terms will then lead to a further increase in computational
cost.

For gpc methods based on $N$ term truncation the error
is of the form
$$E_{\rm gpc}={\cal O}(N^{1-1/\sigma})
+{\cal O}(J^{1-1/\sigma})+\epsilon$$
implying that $N=J={\cal O}(\epsilon^{-\sigma/(1-
\sigma)})$ to balance errors. This expressions must be
substituted into $\cp(N,J,\epsilon)$ to deduce the
asymptotic cost.

In practice, however, the gpc methods can also suffer
when the number of observed data is high, or when the
observational noise is small. To see this, note that the
choice of active terms in the expansion \eqref{unu}
is independent of the data, and is determined by the
prior. For these reasons it may be computationally
expedient in practice to study methods which marry
MCMC and gpc \cite{MaNajmLahnJCP07,MaXiu09,Ma08}. 
In a forthcoming paper \cite{HaSS12} we will
investigate the performance
of the gpc-based posterior approximations, in particular
in the case of values of $\sigma$ which are close to $\sigma=1$,
i.e. in the case of little or no sparsity in the 
expansion of the unknown $u$,
for parametric precomputation of an approximation of 
the law of the forward model, removing the necessity
to compute a forward solution at each step, and 
by extending this idea further to Multi-Level LMCMC.

\subsection{Outlook}
We have proved that for a class of inverse diffusion problems with 
unknown diffusion coefficient $u$, that in the context of a Bayesian
approach to the solution of these inverse problems, given the data 
$\delta$, for a class of diffusion coefficients $u$ which
are spatially heterogeneous and uncertainty parametrized by 
a countable number of random coordinate variables, 
{\em 
sparsity in the gpc expansion of $u$ entails 
the same sparsity in the density of the Bayesian posterior
with respect to the prior measure.
}

We have provided a constructive proof of
{\em 
how to obtain an approximate posterior density
by an $O(N)$ term truncated gpc expansion,
based on a set $\Lambda_N\subset \cF$ of 
$N$ active gpc coefficients in the parametric system's 
forward response.
} 
We have indicated that several algorithms for the 
linear complexity computation of approximate parametrizations
including prediction of the sets $\Lambda_N$ 
with quasi optimality properties (in the sense of best $N$-term
approximations) are now available.

In \cite{AScSt}, based on the present work, 
we present a detailed 
analysis including the error incurred through Finite Element
discretization of the forward problem
in the physical domain $D$, under slightly
stronger hypotheses on the data $u$ and $f$ than studied here.
Implementing these methods, and comparing them
with other methods such as those studied in \cite{HaSS12},
will provide further gudiance for the development of
the promising ideas introduced in this paper, and
variants on them.

Furthermore, we have assumed in the present paper that the 
observation functional $\cO(\cdot)\in V^*$ which
precludes, in space dimensions $2$ and higher, 
point observations. Once again, results which are 
completely analogous to those in the present paper
hold also for such $\cO$, albeit again under stronger
hypotheses on $u$ and on $f$.
This will also be elaborated on in \cite{AScSt}.

As indicated in 
\cite{CCDS11,CDS1,CDS2,CS+CJG11,BAS09,CJG11}
the gpc parametrizations (by either Taylor- or Legendre type
polynomial chaos representations)
of the laws of these quantities 
allow a choice of discretization of each 
gpc coeffcient of the quantity of interest
by sparse tensorization of hierarchic bases in the physical
domain $D$ and the gpc basis functions $L_\nu(y)$ resp. $y^\nu$
so that the additional discretization error incurred by 
the discretization in $D$ can be kept of the order of the
gpc truncation error with an overall computational
complexity which does not exceed that of a single, deterministic
solve of the forward problem.
These issues will be addressed in \cite{AScSt} as well.

\ack
CS is supported by SNF and by ERC under FP7 Grant AdG247277. 
AMS is supported by EPSRC and ERC.

\section*{References}

\bibliographystyle{iopart-num}

\end{document}